\definecolor{blu}{rgb}{0,0,0.1}
\def\sideremark#1{\ifvmode\leavevmode\fi\vadjust{\vbox to0pt{\vss
 \hbox to 0pt{\hskip\hsize\hskip1em
 \vbox{\hsize2.1cm\tiny\raggedright\pretolerance10000
  \noindent #1\hfill}\hss}\vbox to15pt{\vfil}\vss}}}%
\def\itemize{
  \ifnum\@itemdepth>3\@toodeep\else
    \advance\@itemdepth\@ne
    \edef\@itemitem{labelitem\romannumeral\the\@itemdepth}%
        \list{\csname\@itemitem\endcsname}%
      {\leftmargin=20pt\def\makelabel##1{\hss\llap{##1}}}
        \fi}
\renewenvironment{enumerate}{%
  \ifnum \@enumdepth >3 \@toodeep\else
      \advance\@enumdepth \@ne
      \edef\@enumctr{enum\romannumeral\the\@enumdepth}\list
      {\csname label\@enumctr\endcsname}{\usecounter
        {\@enumctr}\leftmargin=20pt\def\makelabel##1{\hss\llap{\upshape##1}}}\fi
}{%
  \endlist
}
\def\@settitle{\begin{center}%
  \baselineskip14\p@\relax
    \bfseries\@title
  \end{center}%
}
\def\@setauthors{%
  \begingroup
  \def\thanks{\protect\thanks@warning}%
  \trivlist
  \centering\footnotesize \@topsep30\p@\relax
  \advance\@topsep by -\baselineskip
  \item\relax
  \author@andify\authors
  \def\\{\protect\linebreak}%
  \authors%
  \ifx\@empty\contribs
  \else
    ,\penalty-3 \space \@setcontribs
    \@closetoccontribs
  \fi
  \endtrivlist
  \endgroup
}
\def\@setthanks{\def\thanks##1{\par##1}\thankses}
   \def\LaTeX{\leavevmode L\raise.42ex
       \hbox{\kern-.3em\size{\sf@size}{0pt}\selectfont A}\kern-.15em\TeX}
   \newcommand{\BibTeX}{{\rm B\kern-.05em{\sc
             i\kern-.025emb}\kern-.08em\TeX}}
\def\bbm[#1]{\mbox{\boldmath $#1$}}
   \newcommand{\e }{\varepsilon }
   \newcommand{\R}{{\mathbb{R}}}
   \newcommand{\Z}{\mathbb{Z}}
\newcommand{\cal}{\mathcal }  
   \newcommand{\N}{\mathbb{N}}
   \newcommand{\beq}{\begin{equation}}
   \newcommand{\eeq}{\end{equation}}
   \newtheorem{theorem}{Theorem}[section]
   \newtheorem{definition}[theorem]{Definition}
   \newtheorem{proposition}[theorem]{Proposition}
   \newtheorem{lemma}[theorem]{Lemma}
   \newtheorem{corollary}[theorem]{Corollary}
   \newtheorem{remark}[theorem]{Remark}
     \newtheorem{example}[theorem]{Example}
   \newcommand{\bremark}{\begin{remark} \em}
   \newcommand{\eremark}{\end{remark} }
\def\bbm[#1]{\mbox{\boldmath $#1$}}
\def\bbm[#1]{\mbox{\boldmath $#1$}}
\begin{document}

\title[Prescribed Gauss curvature problem]
{\LARGE{Prescribed Gauss curvature problem \\ on  singular surfaces}}
\author[T. D'Aprile \& Francesca De Marchis \& Isabella Ianni]{\large \sc Teresa D'Aprile \and Francesca De Marchis \and Isabella Ianni}
\address{Teresa D'Aprile, Dipartimento di Matematica, Universit\`a di Roma ``Tor
Vergata", via della Ricerca Scientifica 1, 00133 Roma, Italy.}
\email{daprile@mat.uniroma2.it}
\address{Francesca De Marchis,  Dipartimento di Matematica, Universit\`a di Roma
``Sapienza", piazzale Aldo Moro 5, 00185 Roma, Italy.
}
\email{demarchis@mat.uniroma1.it}
\address{Isabella Ianni, Dipartimento di Matematica e Fisica,  Universit\`a degli Studi della Campania ``Luigi Vanvitelli", Viale Lincoln 5, 81100 Caserta, Italy.}
\email{isabella.ianni@unina2.it}

\begin{abstract} We study the existence of at least one  conformal metric of prescribed  Gaussian curvature on a closed surface $\Sigma$ admitting conical singularities of orders $\alpha_i$'s at points $p_i$'s. In particular, we are concerned with the case where the prescribed Gaussian curvature is sign-changing. Such a geometrical problem reduces to solving  a singular Liouville equation. By employing a min-max scheme jointly with a finite dimensional reduction method,   we  deduce new perturbative results providing existence     when the quantity
$\chi(\Sigma)+\sum_i \alpha_i$ approaches a positive even integer, where  $\chi(\Sigma)$ is the Euler characteristic of the surface $\Sigma$. 

\medskip

\noindent {\bf Mathematics Subject Classification 2010:} 35J20, 35R01,
53A30

\noindent {\bf Keywords:} prescribed Gauss curvature problem; singular Liouville equation; finite dimensional reduction; min-max scheme

\end{abstract}
\maketitle
\section{Introduction} 
Let $(\Sigma, g)$ be a compact orientable surface without boundary endowed with metric $g$ and Gauss curvature $\kappa_g$.  Given a Lipschitz function $K$ defined on $\Sigma$, a classical problem in differential geometry is the question on the existence of a metric $\tilde g$ on $\Sigma$ conformal to $g$: $$\tilde g = e^{u} g$$ (with $u$ a smooth function on $\Sigma$) of prescribed Gauss curvature $K$. In particular,  in the case of constant function $K$ the above question is referred to as classical \textit{Uniformization problem}, whereas for general function this is known as the \textit{Kazdan-Warner problem} (or the \textit{Nirenberg problem} in the case of the standard sphere). The problem of finding a conformal metric of prescribed
Gauss curvature $K$  amounts to solving the equation \beq\label{reg}-\Delta_g u+2\kappa_g =2K e^u.\eeq Here $\Delta_g$
is the Laplace-Beltrami operator.  The solvability of this problem  so far has not been completely
settled, aside from the case of surfaces with zero Euler characteristic (\cite{KaWa}). In particular, both in the case  of a topological sphere and in the case when $\Sigma$ has negative Euler characteristic only partial results are known (\cite{Au}, \cite{Be}, \cite{BoGa}, \cite{ChYa}, \cite{ChYa2}, \cite{ChLi}, \cite{ChLi2},  \cite{ChLi3}, \cite{DeRo}).

In this paper we will focus on a singular version of the problem \eqref{reg}.
Following the pioneer work of Troyanov \cite{Troy}, we say that $(\Sigma,\tilde{g})$ defines a punctured Riemann surface $\Sigma\setminus\{p_1,\ldots,p_m\}$ that admits a conical singularity of order $\alpha_i>-1$ at the point $p_i$, for any $i=1,\ldots,m$, if in a coordinate system 
$z=z(p)$ around $p_i$ with $z(p_i)=0$ we have $$\tilde g (z)=|z|^{2\alpha_i}e^{w} |dz|^2$$ with $w$ a smooth function. In other words,  $\Sigma$ admits a tangent cone with vertex at $p_i$ and total angle $\theta_i=2\pi(1+\alpha_i)$ for any $i$. The Gauss curvature at any vertex is a Dirac mass with magnitude $-2\pi \alpha_i$. Clearly we can assume $\alpha_i\neq0$, indeed for the \emph{round angle} $\theta_i=2\pi$, corresponding to $\alpha_i=0$, we would have no singular part either.

For a given Lipschitz function $K$ defined on $\Sigma$, we address the question to find a metric $\tilde g$ conformal to $g$ in $\Sigma\setminus\{p_1,\ldots, p_m\}$, namely $$\tilde g = e^{u} g\quad\hbox{ in }\Sigma\setminus\{p_1,\ldots, p_m\}$$ (with $u$ a smooth function on the punctured surface), admitting conical singularities of orders $\alpha_i$'s at the points $p_i$'s and having $K$ as the associated Gaussian curvature in $\Sigma\setminus\{p_1,\ldots,p_m\}$. Similarly to the regular case \eqref{reg}, the question reduces to solving a singular Lioville-type equation on $\Sigma$:
\begin{equation}\label{sing}
-\Delta_g u+2\kappa_g=2K e^u-4\pi\sum_{i=1}^m \alpha_i \delta_{p_i}\qquad\mbox{in $\Sigma$}.
\end{equation}

A first information is given by the Gauss-Bonnet formula: indeed, integrating \eqref{sing} one immediately obtains 
\begin{equation}\label{GBsing}
2\int_{\Sigma} K e^u dV_g=2\int_\Sigma \kappa_gdV_g+4\pi\sum_{i=1}^m \alpha_i=4\pi \Big(\chi(\Sigma)+\sum_{i=1}^m \alpha_i\Big),
\end{equation}
where $dV_g$ denotes the area element in $(\Sigma,g)$ and $\chi(\Sigma)$ is the Euler characteristic of the surface. Analogously to what happens for the regular case, the solvability of \eqref{sing} depends crucially on the value of the generalized Euler characteristic for singular surfaces defined as follows
\beq\label{geneul}
\chi(\Sigma,\underline{\alpha})=\chi(\Sigma)+\sum_{i=1}^m\alpha_i.
\eeq
When $\chi(\Sigma,\underline{\alpha})\leq0$ Troyanov \cite{Troy} obtained existence results analogous to the ones for the regular case (\cite{Be}, \cite{KaWa}).\\
Whereas if $\chi(\Sigma,\underline{\alpha})>0$, then \eqref{GBsing} implies that the function $K$ has to be positive somewhere to allow the solvability of \eqref{sing}. In \cite{Troy} it is proved that if $\chi(\Sigma,\underline{\alpha})\in(0,2(1+\min\{0,\alpha_1,\ldots,\alpha_m\}))$, this necessary condition is also sufficient to guarantee existence of a solution.

\

Let us transform equation \eqref{sing} into another one which admits a variational structure. Let $G(x,p)$ be the Green's function of $-\Delta_g$ over $\Sigma$ with singularity at $p$, namely $G$ satisfies
\begin{equation*}
\left\{
\begin{aligned}
&-\Delta_gG(x,p)=\delta_p-\frac{1}{|\Sigma|} &\hbox{ on }&\Sigma\\
&\int_{\Sigma} G(x,p)dV_g=0\end{aligned}\right.
\end{equation*}where $|\Sigma|$ is the area of $\Sigma$, that is $|\Sigma|=\int_\Sigma dV_g$.
Next, having $\frac{4\pi\chi(\Sigma)}{|\Sigma|}-2\kappa_g(x)$ zero mean value,   we define $f_g$ to be the (unique) solution of
\begin{equation}\label{delgel}
\left\{\begin{aligned}
&-\Delta_g f_g(x)=\frac{4\pi\chi(\Sigma)}{|\Sigma|}-2\kappa_g(x)  &\hbox{ on }&\Sigma\\
&\int_{\Sigma} f_g(x)dV_g=0.\end{aligned}\right.
\end{equation} 
By the change of variable  
\begin{equation}\label{v}
v= u+4\pi \sum_{i=1}^m\alpha_i G(x, p_i)-f_g,
\end{equation}
problem \eqref{sing} is then equivalent to solving the following (regular)  problem
\beq\label{liouvgeom} \tag*{$(*)_{\rho_{geo}}$}
\left\{\begin{aligned}
&-\Delta_g v=\rho_{geo}\bigg(\frac{\tilde K(x)e^{v}}{\int_{\Sigma} \tilde K(x) e^v\,dV_g}-\frac{1}{|\Sigma|}\bigg)\quad\hbox{ on }\Sigma
\\&\rho_{geo}=   4\pi\chi(\Sigma,\underline{\alpha})\end{aligned}\right.\eeq
where $\tilde K(x)$ is the function \beq\label{aaa}\tilde K(x)=K(x)e^{f_g(x)-4\pi \sum_{i=1}^m \alpha_iG(x,p_i)}.\eeq
Notice that, since $G(x,p)$ can be decomposed as 
\begin{equation} \label{0948}
G(x,p)=\frac{1}{2\pi} \log \frac{1}{d_g(x,p)}+h(x,p)\qquad h \in {C}^1(\Sigma^2),
\end{equation} 
where $d_g$ is the distance induced on $\Sigma$ by $g$, we have
\beq
\label{asym} \tilde K(x)\simeq K(x)d_g(x,p_i)^{2\alpha_i} e^{\gamma_i(x)}\quad\hbox{ for $x$ close to $p_i$}
\eeq
for some functions $\gamma_i\in C^1(\Sigma)$.

It is worth to observe that more  generally one could replace the function $f_g$ appearing in \eqref{v} and \eqref{aaa} by any regular function $a_g$ having zero mean value, obtaining (with minor changes) analogous results, but  for the sake of simplicity we will not comment on this issue any further.

\

A possible strategy to solve problem \ref{liouvgeom} is to study the following Liouville problem
\beq\label{liouvhat}\tag*{$(*)_\rho$}  -\Delta_g v=\rho\bigg(\frac{\tilde K(x)e^{v}}{\int_{\Sigma} \tilde K(x) e^v\,dV_g}-\frac{1}{|\Sigma|}\bigg)\quad\hbox{ on }\Sigma\eeq
 for $\rho$ positive independent of $\Sigma$ and $\alpha_i$, and to deduce a posteriori the answer to the geometric question taking $\rho=\rho_{geo}$. 
Since problem \ref{liouvhat} has a variational structure, its solutions can be found as critical points of the associated energy  functional 
$$J_\rho(v):=\frac12\int_{\Sigma} |\nabla_g v|^2dV_g+\frac{\rho}{|\Sigma|}\int_{\Sigma} v \,dV_g-\rho\log\bigg(\int_{\Sigma} \tilde K(x)e^v dV_g\bigg),$$
defined in the domain
$$X=\bigg\{v\in H^1(\Sigma)\,\bigg|\,\int_{\Sigma} \tilde K (x)e^v dV_g>0\bigg\}.$$

Problem \ref{liouvgeom} has been widely investigated in literature in the case $\chi(\Sigma,\underline{\alpha})>0$ when $K$ is a strictly positive function and even more results are available on \ref{liouvhat} for $\rho>0$ when $K$ is positive, which is a relevant question also from the physical point of view, see for example \cite{Tarantello} and the references therein.

In \cite{BT} (see also \cite{BMont}), under the hypotheses $K>0$,
it is shown that a sequence $u_{\rho_n}$ of solutions to $(*)_{\rho_n}$ may blow up only if $\rho_n\to\rho$ with $\rho$ belonging to the following discrete set of values
\begin{equation}\label{Gamma}
\Gamma(\underline{\alpha}_m)=\left\{8\pi n+8\pi\sum_{i\in I}(1+\alpha_i)\,\bigg|\,n\in\N\cup\{0\},\,I\subset\{1,\ldots,m\}\right\}.
\end{equation}
Using this compactness result, in \cite{BDM} it is proved via a Morse theoretical approach that if $\alpha_i>0$ and 
$\chi(\Sigma)\leq0$ then \ref{liouvhat} is solvable for all $\rho\notin \Gamma(\underline{\alpha}_m)$. In the case of surfaces with positive Euler characteristic (which is the most delicate), under some extra hypotheses on the $\alpha_i$'s, in \cite{MalchiodiRuizSphere} the solvability of $(*)_{\rho}$ for $\rho\in(8\pi,16\pi)\setminus \Gamma(\underline{\alpha}_m)$ is established. Still for $K$ strictly positive, the case when the $\alpha_i's$ are negative has been considered in \cite{Carl} and \cite{CarlMal}.

The special case of prescribing positive constant curvature on $\mathbb{S}^2$ with $m=2$ is considered first in \cite{Troy1}, where it is shown that \ref{liouvgeom} admits a solution only if $\alpha_1=\alpha_2$ and this implies (taking $\alpha_2=0$) that no solution exists 
for $m=1$. Furthermore, necessary and sufficient conditions on the $\alpha_i$'s for the solvability with $m=3$ are determined in \cite{Eremenko}.

More recently, in \cite{ChenLin} the Leray-Schauder degree of \ref{liouvhat} has been computed for $\rho\notin\Gamma(\underline{\alpha}_m)$, recovering some of the previous existence results and obtaining new ones in the case $\chi(\Sigma)>0$. Anyway on the sphere there are still different situations in which the degree vanishes and the solvability is an open problem.

\

All the above results are concerned with the case $K>0$. Up to our knowledge the singular problem \ref{liouvgeom} with $K$ sign-changing has been considered only in \cite{fra&raf} when the surface is the standard sphere $(\mathbb{S}^2,g_0)$ and in \cite{fra&raf&dav} for a general surface under mild assumptions on the nodal set of $K$ (see Remarks \ref{rem:counterpart1} and \ref{rem:counterpart2}). 

\

In this paper we will mainly consider the problem \ref{liouvgeom} with $K$ sign-changing, obtaining new existence results via a perturbative approach already applied in \cite{tea&pp} to deal with \ref{liouvhat} in the case $K$ positive, and in \cite{dap} and \cite{DKM}   for the corresponding Liouville-type equation in a Euclidean context.

\

We define the set $$\Sigma^+:=\left\{\xi\in \Sigma\,\Big|\, K(\xi)>0\right\},$$ 
and in order to state our results we introduce the following hypotheses on $K$, on the $p_i$'s and the $\alpha_i$'s:

\begin{enumerate}
\item[(H1)] $K$ sign-changing, namely $K(\xi)K(\eta) < 0$ for some $\xi,\eta\in \Sigma$;
\item[(H2)] $K\in {C}^{2}(\Sigma)$;
\item[(H3)] $\nabla K(\xi)\neq 0$ for all $\xi\in \partial \Sigma^+$;
\item[(H4)] $p_i\in\Sigma\setminus\partial \Sigma^+$ for all $i\in\{1,\ldots,m\}.$
\end{enumerate}

In virtue of (H4) we may assume, up to reordering, that
\begin{equation}\label{ell}
p_i\in \Sigma^+ \hbox{ for }i\in\{1,\ldots, \ell\}\hbox{ and } p_i\in \Sigma\setminus \overline{\Sigma^+}\hbox{ for }i\in\{\ell+1,\ldots, m\}
\end{equation} 
for some $0\leq\ell\leq m$. 
We are now ready to present our main perturbative results which provides existence     when  the quantity
$\sum_{i=1}^m \alpha_i+\chi(\Sigma)$ approaches an even integer from the left hand side.

\begin{theorem}\label{thm:intro N+} 
Let $N\in\N$. Assume that $\Sigma^+$ has $N^+$ connected components with $N^+\geq N$, hypotheses (H1),(H2) hold and
$$\Delta_g (\log K(x))\leq -\beta<0\quad \forall x\in\Sigma^+$$ for some $\beta>0$.
Then for any $\alpha_\star>-1$  there exists $\delta\in(0,\beta|\Sigma|)$ such that if $\alpha_1,\ldots,\alpha_m>\alpha_\star$  satisfy $$\alpha_i>0\quad \forall i=1,\ldots,\ell,$$ 
$$\sum_{i=1}^m \alpha_i=2N-\chi(\Sigma)-\frac{\varepsilon}{4\pi}\quad \mbox{ for some } \varepsilon\in (0,\delta),$$
 then \ref{liouvgeom} admits a solution $v_\e$ with $\rho_{geo}=8\pi N-\varepsilon, $ i.e., $K$ is the Gaussian curvature of at least one metric conformal to $g$ and having a conical singularity at $p_i$ with order $\alpha_i$.
  Moreover there exist distinct points $\xi_1^*, \ldots, \xi_N^*\in \Sigma^+\setminus \{p_1,\ldots,p_\ell\}$ such that 
\beq\label{conv0}\rho\frac{\tilde K(x)e^{v_\e}}{\int_{\Sigma}\tilde K(x)e^{v_\e}dV_g}\to 8\pi \sum_{j=1}^N\delta_{\xi_j^\ast}\quad\hbox{ as }\e\to 0^+
\eeq in the measure sense.
\end{theorem}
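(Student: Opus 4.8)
The plan is to construct a solution of $(*)_\rho$ with $\rho = 8\pi N - \varepsilon$ via a Lyapunov--Schmidt finite-dimensional reduction, building on the fact that $(*)_{\rho_{geo}}$ is exactly $(*)_\rho$ at the parameter value $\rho_{geo} = 4\pi\chi(\Sigma,\underline\alpha) = 4\pi(2N - \varepsilon/4\pi) = 8\pi N - \varepsilon$, so it suffices to solve $(*)_\rho$ at this $\rho$. First I would fix $N$ distinct points $\xi = (\xi_1,\dots,\xi_N)$ in the open set $\Sigma^+\setminus\{p_1,\dots,p_\ell\}$ and build an approximate solution $V_\xi = \sum_{j=1}^N PU_{\delta_j,\xi_j}$ as a sum of (projected) standard bubbles $U_{\delta,\xi}(x) = \log\frac{8\delta^2}{(\delta^2 + d_g(x,\xi)^2)^2}$ modeled on solutions of the Liouville equation in the plane, where the concentration parameters $\delta_j = \delta_j(\varepsilon)$ are chosen (as a function of $\varepsilon$ and $\xi$) to balance the equation to leading order; because $\xi_j \in \Sigma^+$ and $\xi_j \neq p_i$, near each $\xi_j$ the weight $\tilde K$ from \eqref{aaa} is smooth and strictly positive, so the singular points play no role in the local analysis. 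Then I would set up the ansatz $v = V_\xi + \phi$, project the equation onto the approximate kernel spanned by the $d_g(\cdot,\xi_j)$-derivatives of the bubbles, and invoke the invertibility of the linearized operator on the orthogonal complement (uniform in $\varepsilon$ small and $\xi$ in a compact region bounded away from the $p_i$'s) to solve, by a contraction mapping argument, for $\phi = \phi_\varepsilon(\xi)$ with $\|\phi_\varepsilon(\xi)\| \to 0$.

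The second half is the finite-dimensional part: one shows that the reduced energy $\mathcal F_\varepsilon(\xi) := J_\rho(V_\xi + \phi_\varepsilon(\xi))$ admits a critical point $\xi^* = (\xi_1^*,\dots,\xi_N^*)$, which then yields a genuine solution $v_\varepsilon = V_{\xi^*} + \phi_\varepsilon(\xi^*)$ of $(*)_\rho$. The key computation is the asymptotic expansion
\begin{equation*}
\mathcal F_\varepsilon(\xi) = c_0 + c_1\varepsilon + \varepsilon\,\Big(\varphi_N(\xi) + o(1)\Big)\qquad\text{as }\varepsilon\to 0^+,
\end{equation*}
uniformly on compact subsets, where the leading $\xi$-dependent functional $\varphi_N$ is (up to positive constants and additive constants) of the form $\sum_{j} \log(8/K(\xi_j))$ plus a Robin-function / interaction term $\sum_j H(\xi_j,\xi_j) + \sum_{j\neq k} G(\xi_j,\xi_k)$, with the sign arrangement such that $\varphi_N \to +\infty$ as any $\xi_j$ approaches $\partial\Sigma^+$ (because $K(\xi_j)\to 0^+$ there) or as two points collide. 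Here is where the two hypotheses enter: the condition $\Delta_g(\log K) \le -\beta < 0$ on $\Sigma^+$, together with $\varepsilon < \delta < \beta|\Sigma|$, is what guarantees the coefficient of the dominant term has the right sign so that the concentration points are forced to stay in the interior and $\varphi_N$ is coercive (blows up to $+\infty$) on the configuration space; and the assumption $N^+ \ge N$ on the number of connected components of $\Sigma^+$ ensures there is enough ``room'' — one can place at least one $\xi_j$ in each of $N$ distinct components, so the collision term $G(\xi_j,\xi_k)\to-\infty$ is never triggered for points in different components and a minimum of $\varphi_N$ exists in the (non-empty) interior. Standard Lyapunov--Schmidt theory then promotes an interior minimum of $\varphi_N$ to a critical point of $\mathcal F_\varepsilon$ for $\varepsilon$ small.

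Finally, the concentration statement \eqref{conv0} follows directly from the construction: $\rho\,\tilde K e^{v_\varepsilon}/\int_\Sigma \tilde K e^{v_\varepsilon}\,dV_g$ is, up to the small error $\phi_\varepsilon$, a normalized sum of bubble densities $\tilde K e^{PU_{\delta_j,\xi_j^*}}$, each of which converges weakly-$*$ to $8\pi\,\delta_{\xi_j^*}$ as $\delta_j \to 0$ (which happens as $\varepsilon\to 0^+$); that the limiting points $\xi_j^*$ are distinct and lie in $\Sigma^+\setminus\{p_1,\dots,p_\ell\}$ is exactly the interior/non-collision property forced by the coercivity of $\varphi_N$. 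The main obstacle I anticipate is establishing the uniform invertibility of the linearized operator and the sharp energy expansion in the presence of the singular weight $\tilde K$: although near each $\xi_j\in\Sigma^+$ the weight is benign, one must still control the global contributions of the terms $d_g(x,p_i)^{2\alpha_i}$ — in particular near the singular points $p_i$ with $i\le\ell$ (which lie in $\Sigma^+$) the weight $\tilde K$ vanishes or blows up, and one needs the lower bound $\alpha_\star > -1$ together with sufficiently small $\varepsilon$ to ensure these regions contribute only lower-order terms and do not attract the concentration points; carrying the $C^2$ regularity of $K$ (hypothesis (H2)) and the $C^1$ regularity of $h$ and the $\gamma_i$'s through the estimates to get the $o(1)$ remainder uniform in $\xi$ is the technically delicate part.
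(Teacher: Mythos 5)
Your overall architecture (finite-dimensional reduction to a functional of the concentration points, then a critical point of the reduced functional obtained by placing one point in each of $N$ distinct components of $\Sigma^+$) is the same as the paper's, which imports the reduction as a black box from \cite{espofigue} (Proposition \ref{prop:espofigue}) and combines it with Proposition \ref{prop:ptocrit0}. However, there is a genuine gap in how you use the two quantitative hypotheses, and it is precisely the gap that would make your construction fail to produce a solution for $\rho=8\pi N-\varepsilon$ rather than $8\pi N+\varepsilon$. You attribute the hypothesis $\Delta_g(\log K)\leq-\beta$ on $\Sigma^+$, together with $\varepsilon<\delta<\beta|\Sigma|$, to the coercivity of the reduced functional $\varphi_N$ and the confinement of the concentration points in the interior. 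That is not its role: coercivity near $\partial\Sigma^+$ already follows from $K(\xi_j)\to 0^+$ there (so $\log K(\xi_j)\to-\infty$), with no condition on $\Delta_g\log K$; and repulsion from the singular sources $p_i\in\Sigma^+$ follows from the hypothesis $\alpha_i>0$ for $i\leq\ell$ (which makes $\tilde K\simeq K\,d_g(\cdot,p_i)^{2\alpha_i}\to 0$ at $p_i$), a hypothesis you never actually invoke — if some $\alpha_i<0$ the reduced functional would blow up to the wrong sign at $p_i$ and the simple local-extremum argument would break down.

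The true role of $\Delta_g(\log K)\leq-\beta$ and $\varepsilon<\beta|\Sigma|$ is to fix the sign of the second-order coefficient $\mathcal A(\bbm[\xi])$ in \eqref{matA}: at the critical point one needs
\begin{equation*}
\Delta_g\log K(\xi_j)+\frac{8\pi N-4\pi\chi(\Sigma,\underline\alpha)}{|\Sigma|}
=\Delta_g\log K(\xi_j)+\frac{\varepsilon}{|\Sigma|}\leq-\beta+\frac{\varepsilon}{|\Sigma|}<0,
\end{equation*}
and this negativity is exactly what makes the concentration parameters $\delta_j$ solvable as positive numbers when $\rho$ approaches $8\pi N$ \emph{from the left}; with $\mathcal A>0$ one would instead get solutions only for $\rho>8\pi N$ (this is the dichotomy in Proposition \ref{prop:espofigue}). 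In your write-up the $\delta_j$'s are said to be ``chosen to balance the equation to leading order'' with no discussion of when such a choice is possible, so the one-sided nature of the result ($\rho_{geo}=8\pi N-\varepsilon$) is never accounted for. Until you identify where the sign of $\Delta_g\log K+\varepsilon/|\Sigma|$ enters the balancing of the $\delta_j$'s (equivalently, the sign of $\mathcal A$ at the reduced critical point), the proof is incomplete at its decisive step.
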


\begin{theorem}\label{thm:intro noncontractible}
Let $N\in\N$. Assume that $\Sigma^+$ has a non contractible connected component, hypotheses (H1), (H2), (H3), (H4) hold and
$$\Delta_g (\log K(x))\leq -\beta<0\quad \forall x\in\Sigma^+$$ for some $\beta>0$. Then for any $\alpha_\star>-1$  there exists $\delta\in(0,\beta|\Sigma|)$ such that if $\alpha_1,\ldots,\alpha_m>\alpha_\star$ satisfy $$\alpha_i \neq 0,1,2,\dots, N-1\quad \forall  i=1,\dots, \ell,$$    
$$\sum_{i=1}^m \alpha_i=2N-\chi(\Sigma)-\frac{\varepsilon}{4\pi}\quad \mbox{ for some } \varepsilon\in (0,\delta),$$
then \ref{liouvgeom} admits a solution with 
$\rho_{geo}=8\pi N-\varepsilon$. Moreover there exist distinct points $\xi_1^*, \ldots, \xi_N^*\in \Sigma^+\setminus \{p_1,\ldots,p_\ell\}$ such that \eqref{conv0} holds.
\end{theorem}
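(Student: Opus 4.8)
The plan is to construct solutions of $(*)_\rho$ with $\rho = 8\pi N - \varepsilon$ by the Lyapunov--Schmidt (finite dimensional reduction) scheme, adapted from \cite{tea&pp}, \cite{dap}, \cite{DKM}, adding the new ingredient needed to handle a \emph{non-contractible} component of $\Sigma^+$ via a min-max argument, which replaces the counting argument of Theorem \ref{thm:intro N+}. First I would build an approximate solution. Fix distinct concentration points $\xi_1,\dots,\xi_N$ inside a single non-contractible component $\Sigma_0$ of $\Sigma^+$, with $\xi_j \neq p_i$; around each one place a standard bubble $PU_{\xi_j,\lambda_j}$, the projection onto $\Sigma$ (solving $-\Delta_g P U = 2\bar\lambda_j^2 e^{U_{\xi_j,\lambda_j}} - \text{const}$) of the Liouville profile $U_{\xi,\lambda}(x) = \log \frac{8\lambda^2}{(1+\lambda^2 d_g(x,\xi)^2)^2}$, with the scaling parameters $\lambda_j$ linked to $\varepsilon$ through a relation of the form $\log \lambda_j^2 \sim \frac{4\pi N}{\varepsilon} + O(1)$ and the reduced correction chosen to absorb the local curvature expansion \eqref{asym} and the $f_g$, $G$ terms. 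Plugging $W = \sum_j PU_{\xi_j,\lambda_j}$ (plus lower-order corrections) into $J_\rho$ produces an error that is small in the dual norm precisely because $\rho$ is close to $8\pi N$ from below and $\Delta_g(\log K) < -\beta$ makes the leading term of the reduced energy have the right sign.

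Next comes the reduction itself. On the space $H^1(\Sigma)$ I would linearize $(*)_\rho$ at $W$; the kernel of the linearized operator is, to leading order, spanned by the $3N$ functions $\partial_{\lambda_j} PU_j$, $\partial_{(\xi_j)_1} PU_j$, $\partial_{(\xi_j)_2} PU_j$. Working orthogonally to this approximate kernel, a contraction-mapping / invertibility argument (the standard a priori estimate for the linearized singular Liouville operator away from the $p_i$'s, using (H2) so that $\tilde K \in C^2$ locally and $\tilde K(\xi_j) > 0$) yields a unique small $\phi = \phi(\xi,\lambda)$ such that $W + \phi$ solves the equation up to a linear combination of the kernel elements. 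Substituting back, one gets a finite dimensional \emph{reduced energy} $\widetilde J_\varepsilon(\xi_1,\dots,\xi_N)$ — after optimizing out the $\lambda_j$'s, which is done explicitly — whose critical points correspond to genuine solutions. The expansion takes the schematic form
\begin{equation*}
\widetilde J_\varepsilon(\xi) = c_0 + c_1 \varepsilon \Bigl( \log \tfrac{1}{\varepsilon} \Bigr) + \varepsilon \Bigl( \Phi_N(\xi) + o(1) \Bigr),
\end{equation*}
where $\Phi_N$ is a reduced functional on the configuration space built from $\log \tilde K$, the Robin function $h(\xi_j,\xi_j)$ and the interaction terms $G(\xi_j,\xi_k)$, and the $o(1)$ is uniform on compact subsets of $\{(\xi_1,\dots,\xi_N) \in (\Sigma_0)^N \mid \xi_j \text{ distinct}, \xi_j \neq p_i\}$.

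The min-max step is where the non-contractibility of $\Sigma_0$ is used. The configuration space $M = \{\xi \in (\Sigma_0)^N : \xi_j \text{ distinct}\}$, or a suitable subset keeping the $\xi_j$ away from $\partial\Sigma_0$ and from the punctures $p_1,\dots,p_\ell$ (here the hypothesis $\alpha_i \neq 0,1,\dots,N-1$ for $i \le \ell$ enters: near such a puncture $\tilde K$ vanishes like $d_g(x,p_i)^{2\alpha_i}$, and the sign/order condition guarantees the reduced energy does not allow bubbles to collapse onto $p_i$, i.e.\ $\Phi_N \to +\infty$ there or the relevant local degree obstruction vanishes), is non-contractible when $\Sigma_0$ is — for instance one can detect a nontrivial loop or a nontrivial class in $H_*(M)$ coming from a nontrivial loop in $\Sigma_0$. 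One then sets up a min-max class $\mathcal F$ of subsets of $M$ linked to this topology and defines $c_\varepsilon = \inf_{A \in \mathcal F} \sup_{\xi \in A} (-\widetilde J_\varepsilon(\xi))$ (sign chosen so that one looks for a max of the interaction part $\Phi_N$, which is the coercive direction). A standard deformation-lemma argument, combined with the fact that on the boundary of the admissible set the reduced energy is strictly smaller (resp. larger) than the min-max value for $\varepsilon$ small — this is the Palais--Smale / boundary-control part and is where (H3) and (H4) and the $\alpha_i$ condition do their work to keep the critical configuration in the interior — shows $c_\varepsilon$ is a critical value of $\widetilde J_\varepsilon$, hence $(*)_\rho$ with $\rho = 8\pi N - \varepsilon$ has a solution $v_\varepsilon = W + \phi$. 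Setting $\varepsilon$ so that $\rho_{geo} = 4\pi\chi(\Sigma,\underline\alpha) = 4\pi(2N) - \varepsilon = 8\pi N - \varepsilon$ — which is exactly the constraint $\sum \alpha_i = 2N - \chi(\Sigma) - \frac{\varepsilon}{4\pi}$ in the statement — gives the geometric solution, and the concentration \eqref{conv0} toward $\xi^* = (\xi_1^*,\dots,\xi_N^*) \in \Sigma_0^N$ follows from $\lambda_j \to \infty$ and the profile of $W$. Finally, unwinding the change of variables \eqref{v} produces the conformal metric with the prescribed conical singularities.

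The main obstacle, I expect, is the min-max construction together with the boundary analysis: one must (i) identify the correct topological invariant of the configuration space that survives the presence of the $\ell$ punctures — the condition $\alpha_i \notin \{0,1,\dots,N-1\}$ is precisely tuned so that a single bubble, or a cluster of $\le N$ bubbles, cannot be ``absorbed'' at $p_i$ without changing the mass by a forbidden amount from $\Gamma(\underline\alpha_m)$ in \eqref{Gamma}, so the relevant degree/linking is not killed; and (ii) prove the uniform $C^0$ (indeed $C^1$) expansion of $\widetilde J_\varepsilon$ up to, and showing strict monotonicity near, this boundary stratum, so that the min-max level is attained in the interior. By contrast the finite dimensional reduction itself — the construction of $W$, the invertibility of the linearized operator, and the $\phi$-estimate — is by now routine and follows \cite{tea&pp}, \cite{dap}, \cite{DKM} with only the bookkeeping changes forced by the $C^2$ (rather than smooth) regularity of $K$ and by the extra singular weights $d_g(x,p_i)^{2\alpha_i}$ in $\tilde K$, which, since all $p_i$ with $i \le \ell$ are kept at positive distance from the $\xi_j$'s and the $p_i$ with $i > \ell$ lie outside $\overline{\Sigma^+} \supset \Sigma_0$ by (H4), contribute only smooth bounded factors in the region that matters.
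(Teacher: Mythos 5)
Your overall architecture --- a finite-dimensional reduction producing a reduced energy on the configuration space, followed by a min-max exploiting the non-contractibility of $\Sigma^+$ and a boundary analysis in which $\alpha_i\neq 0,1,\dots,N-1$ prevents collapse onto the $p_i$ --- is exactly the paper's. The paper, however, does not redo the reduction: it quotes it wholesale from \cite{espofigue} (Proposition \ref{prop:espofigue}), so that everything reduces to (a) producing a \emph{stable} critical point of the explicit function $\Psi$ of \eqref{psi0} on $\mathcal{M}^+$ and (b) checking $\mathcal{A}<0$ there, which is where $\Delta_g\log K\le-\beta$ and $\delta<\beta|\Sigma|$ enter and which fixes the side $\rho<8\pi N$ from which solutions exist. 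The part of your write-up that is detailed (bubbles, linearization, contraction mapping, $\lambda$--$\varepsilon$ bookkeeping) is precisely the part that is already available in the literature; the two steps you yourself flag as ``the main obstacle'' are the actual content of the proof, and they are not supplied.

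Concretely: (i) for the min-max you need a quantitative linking statement, not merely non-contractibility of the configuration space. The paper fixes a non-contractible curve $\sigma\subset\Sigma^+$ avoiding the $p_i$, builds a retraction $\mathcal{P}:\Sigma^+\setminus\{p_1,\dots,p_\ell\}\to\sigma$ whose fibers are \emph{uniformly separated} (this uses the hole or handle explicitly, via the planar annulus or the embedding \eqref{toro}), and proves by a degree argument on $(\mathbb{S}^1)^N$ (Lemma \ref{inters}) that every admissible deformation admits a configuration lying on $N$ prescribed distinct fibers; only the uniform fiber separation yields an upper bound on the max-min value independent of the cut-off parameter (Proposition \ref{loweresti}), which is what makes the linking inequality (P2) nontrivial. (ii) Your heuristic at the punctures is incorrect as stated: for $\alpha_i>0$ the term $-\alpha_i\sum_jG(\xi_j,p_i)$ drives $\Psi\to-\infty$ when a single point approaches $p_i$, and when a cluster of $k$ points collapses onto $p_i$ the leading rate of $\Psi$ is proportional to $k(k-1-\alpha_i)\log\frac{1}{d}$, which can have either sign; moreover the real issue is not the value of $\Psi$ but the possible occurrence of a \emph{constrained} critical point on $\partial\mathcal{D}$. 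The paper excludes this by a blow-up analysis of the Lagrange-multiplier identity (Lemmas \ref{step0}--\ref{step2} and the conclusion of Section \ref{section:P3}), where the forbidden degenerate alternative is exactly $\#Y_1-1=\alpha_i$; this is the only place where $\alpha_i\neq 0,1,\dots,N-1$ is used, and your appeal to the quantization set $\Gamma(\underline{\alpha}_m)$ of \eqref{Gamma} does not substitute for it. Until (i) and (ii) are carried out, the proposal is a plan rather than a proof.
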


\begin{remark}\label{rem:counterpart1}
The previous two results are a sort of perturbative counterpart of the global existence results established in \cite[Theorem 1.2]{fra&raf} if $(\Sigma, g)=(\mathbb{S}^2,g_0)$ and in \cite[Theorem 2.2]{fra&raf&dav} for a general surface. Indeed, in  \cite[Theorem 1.2]{fra&raf} and in  \cite[Theorem 2.2]{fra&raf&dav}   it has been shown that if $\rho_{geo}\notin \Gamma(\underline{\alpha}_\ell)$  (where $\ell$ is defined in \eqref{ell}) and the positive nodal region of $K$ has a non contractible connected component or a sufficiently large number of connected components (precisely, a number of connected components  greater than $\frac{\rho_{geo}}{8\pi}$) then \ref{liouvhat} admits a solution. Nevertheless, in \cite{fra&raf}-\cite{fra&raf&dav} the behaviour of the solutions as $\rho\to 8\pi N^-$ is unknown, whereas the solutions constructed in  Theorems \ref{thm:intro N+} and \ref{thm:intro noncontractible} exhibit a blow-up phenomena, a property that has a definite interest in its own. \end{remark}

 When all the connected components of $\Sigma^+$ are simply connected or their number is not
sufficiently  large, then the solvability issue is more delicate and it is treated in the following theorem.
 \\
Hereafter for any $\alpha>-1$, the square bracket $[\alpha]$ stands for the integer part and 
$$[\alpha]^-:=\lim_{\e\to 0^+}[\alpha -\e]=\max\{n\,|\, n\in \Z,\; n<\alpha\}.$$

\begin{theorem}\label{thm:intro contractible}
Let $N\in\N$. Assume that hypotheses (H1), (H2), (H3), (H4) hold and $$\Delta_g (\log K(x))\leq -\beta<0\quad \forall x\in\Sigma^+$$ for some $\beta>0$.
Then for any $\alpha_\star>-1$  there exists $\delta\in(0,\beta|\Sigma|)$ such that if $\alpha_1,\ldots,\alpha_m>\alpha_\star$  satisfy $$\alpha_i \neq 0,1,2,\dots, N-1\quad \forall  i=1,\dots, \ell,$$ \begin{equation}\label{ineq intro}
N\leq\ell+\sum_{i=1}^\ell[\alpha_i]^-,
\end{equation} 
\beq\label{AA}\sum_{i=1}^m \alpha_i=2N-\chi(\Sigma)-\frac{\varepsilon}{4\pi}\quad \mbox{ for some } \varepsilon\in (0,\delta),\eeq then \ref{liouvgeom} admits a solution with 
$\rho_{geo}=8\pi N-\varepsilon$. Moreover there exist distinct points $\xi_1^*, \ldots, \xi_N^*\in \Sigma^+\setminus \{p_1,\ldots,p_\ell\}$ such that \eqref{conv0} holds.
\end{theorem}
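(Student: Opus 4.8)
The plan is to construct a solution of $(*)_\rho$ with $\rho = 8\pi N - \varepsilon$ by a finite-dimensional Lyapunov--Schmidt reduction, building an approximate solution that looks like a sum of $N$ standard bubbles centred at points $\xi_1,\dots,\xi_N$ lying in the \emph{positive} region $\Sigma^+$. First I would set up the ansatz: for a parameter $\lambda>0$ large and points $\boldsymbol{\xi}=(\xi_1,\dots,\xi_N)\in (\Sigma^+)^N$ with $\xi_j\notin\{p_1,\dots,p_\ell\}$, define $W_{\lambda,\boldsymbol{\xi}} = \sum_{j=1}^N PU_{\lambda,\xi_j}$, where $U_{\lambda,\xi}$ is the projected standard Liouville bubble $\log\frac{8\lambda^2}{(1+\lambda^2 d_g(x,\xi)^2)^2}$ (suitably corrected to obey the mean-value/Green's-function normalization on $\Sigma$ and to incorporate the conformal factor $\tilde K$). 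The relation $\rho_{geo}=8\pi N-\varepsilon$ and the Gauss--Bonnet identity $\sum\alpha_i = 2N - \chi(\Sigma)-\varepsilon/4\pi$ in \eqref{AA} is exactly what forces $N$ bubbles of mass $8\pi$ each; the small parameter $\varepsilon$ plays the role of the perturbation that makes the reduced problem non-degenerate. Because the $p_i$ with $i\le\ell$ sit inside $\Sigma^+$ and carry order $\alpha_i\notin\{0,1,\dots,N-1\}$, the function $\tilde K$ behaves near $p_i$ like $d_g(x,p_i)^{2\alpha_i}$ times a positive $C^1$ function, so a bubble is allowed to collapse onto $p_i$ only if $\alpha_i$ is large enough — this is precisely where the counting constraint \eqref{ineq intro}, $N\le \ell+\sum_{i=1}^\ell[\alpha_i]^-$, enters, guaranteeing enough ``room'' in $\Sigma^+$ (counted with the multiplicities $1+[\alpha_i]^-$ that each singular point can absorb) to place $N$ bubbles and make the reduced energy attain an interior critical point.

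The next step is the standard reduction machinery. I would linearize $J_\rho$ at $W_{\lambda,\boldsymbol{\xi}}$, identify the approximate kernel spanned by the functions $\partial_\lambda PU_{\lambda,\xi_j}$ and $\partial_{\xi_j} PU_{\lambda,\xi_j}$ coming from the invariances of the single bubble, show invertibility of the linearized operator on the orthogonal complement with norm bounds uniform in $\lambda$ large and $\boldsymbol{\xi}$ in a compact region of $(\Sigma^+\setminus\{p_1,\dots,p_\ell\})^N$ away from the diagonal, and then solve the auxiliary equation by a contraction mapping argument to produce a genuine solution $w_{\lambda,\boldsymbol{\xi}}=W_{\lambda,\boldsymbol{\xi}}+\phi_{\lambda,\boldsymbol{\xi}}$ of the projected problem, with $\|\phi_{\lambda,\boldsymbol{\xi}}\|_{H^1}$ small and controlled. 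The condition $\Delta_g(\log K)\le -\beta<0$ on $\Sigma^+$ is what rules out the single-bubble degeneracy in the $\lambda$-direction: it guarantees a definite sign in the expansion of the reduced energy in $\lambda$ and ties the optimal $\lambda$ to $\varepsilon$ (roughly $\log\lambda \sim 1/\varepsilon$ or $\lambda^{\varepsilon}\sim$ const), so that a solution genuinely exists for $\varepsilon\in(0,\delta)$ with $\delta<\beta|\Sigma|$.

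Then I would carry out the energy expansion: evaluating $J_\rho(W_{\lambda,\boldsymbol{\xi}}+\phi_{\lambda,\boldsymbol{\xi}})$ as $\lambda\to\infty$ yields a reduced functional of the form $c_0 + \frac{\varepsilon}{?}\log\lambda - \sum_j \big(\text{something}\big)\log\lambda + \Psi(\boldsymbol{\xi}) + o(1)$, where $\Psi(\boldsymbol{\xi})$ collects the ``Robin-function plus interaction plus $\log\tilde K$'' terms. After optimizing in $\lambda$, the remaining finite-dimensional problem is to find a stable critical point of the functional $\boldsymbol{\xi}\mapsto \Psi(\boldsymbol{\xi})$ (or, equivalently, of $\sum_j \log(\tilde K(\xi_j)) + \text{(Green interactions and Robin terms)}$) over the configuration space $\mathcal{M}$ of $N$-tuples of distinct points in $\Sigma^+\setminus\{p_1,\dots,p_\ell\}$. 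The hypothesis (H3), $\nabla K\ne 0$ on $\partial\Sigma^+$, ensures $\log\tilde K\to -\infty$ as any $\xi_j$ approaches $\partial\Sigma^+$, so $\Psi$ is coercive toward the boundary of $\mathcal{M}$ coming from $\partial\Sigma^+$; combined with $\log\tilde K\to-\infty$ as $\xi_j\to p_i$ when $\alpha_i<0$ and the repulsion between bubbles, this traps a minimum (or max-min) in the interior \emph{provided} $\mathcal{M}$ is non-empty and topologically non-trivial enough — and the counting inequality \eqref{ineq intro} is exactly the combinatorial condition that makes $\mathcal{M}$ non-empty with the right local topology (each $p_i$, $i\le\ell$, can host up to $1+[\alpha_i]^-$ of the $N$ bubbles ``piling up'' near it, and $\ell+\sum[\alpha_i]^-\ge N$ means there is a valid placement). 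A topological argument — e.g. that $\Psi$ has a nontrivial min-max level over a suitable subset of $\mathcal{M}$, or that its Leray--Schauder/Brouwer degree is nonzero — then produces the required critical point $\boldsymbol{\xi}^* = (\xi_1^*,\dots,\xi_N^*)$, and the corresponding $w$ is a solution of $(*)_{\rho_{geo}}$. The concentration statement \eqref{conv0} follows from the bubble structure of $W_{\lambda,\boldsymbol{\xi}^*}$ and the smallness of $\phi$.

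The main obstacle I expect is precisely this last finite-dimensional step: showing that the reduced energy $\Psi$ on $\mathcal{M}$ does have a critical point that is stable under perturbations, in the delicate regime where $\Sigma^+$ may have \emph{all} components contractible and too few of them — this is why \eqref{ineq intro} is imposed, and the proof must verify that the ``multiplicity budget'' $\ell + \sum_{i=1}^\ell[\alpha_i]^-$ from the singular points genuinely substitutes for the missing topology of $\Sigma^+$, i.e. one must carefully analyse the behaviour of $\log\tilde K\simeq 2\alpha_i\log d_g(x,p_i)+ (\text{smooth})$ near each $p_i$ to see how many bubbles can safely accumulate there and to build the correct max-min or degree-theoretic level that does not leak out to the excluded part of the boundary of $\mathcal{M}$. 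A secondary technical point is making all the estimates in the reduction uniform as some $\xi_j\to p_i$ (where $\tilde K$ vanishes to high order), which requires a bubble ansatz adapted to the singular weight $d_g(x,p_i)^{2\alpha_i}$ rather than the plain Liouville bubble; but the structure of the argument is by now classical and the novelty is concentrated in the geometry of $\mathcal{M}$ and the role of \eqref{ineq intro}.
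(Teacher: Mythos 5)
Your overall architecture coincides with the paper's: the PDE part is exactly the finite-dimensional reduction of Esposito--Figueroa quoted as Proposition \ref{prop:espofigue}, the hypothesis $\Delta_g\log K\le-\beta$ together with $\varepsilon\in(0,\delta)$, $\delta<\beta|\Sigma|$, forces $\mathcal A<0$ on all of $\mathcal{M}^+$ (Remark \ref{rem:A<0}), which is why the approach is from the left ($\rho_{geo}=8\pi N-\varepsilon$), and the whole theorem reduces to producing a \emph{stable} critical point of the reduced energy $\Psi$ on $\mathcal{M}^+$. Up to that point you and the paper agree.

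The genuine gap is precisely at the step you yourself flag as ``the main obstacle'': you do not construct the critical point, and the mechanism you sketch would fail. You claim that $\log\tilde K\to-\infty$ near $\partial\Sigma^+$ makes $\Psi$ ``coercive'' and ``traps a minimum''; in fact $\Psi\to-\infty$ as any $\xi_j\to\partial\Sigma^+$ and also as $\xi_j\to p_i$ when $\alpha_i>0$, while $\Psi\to+\infty$ on the collision set, so $\Psi$ is unbounded above and below on $\mathcal{M}^+$ and neither minimization nor maximization can work. The paper's proof of Proposition \ref{prop:ptocrit2} (Section \ref{section:minmax}) is a genuine max--min: the curves $\sigma_j$ are taken to be small circles $S_\delta(p_i)$ around the interior singular points, $N$ is split as $N_1+\dots+N_\ell$ with $N_i\le 1+[\alpha_i]^-$ --- this is exactly where \eqref{ineq intro} enters --- an intersection lemma proved by topological degree bounds the max--min level from above (the key point being that $\alpha_i>N_i-1$ lets the attractive term $-\alpha_i\log\frac{1}{|z_j-p_i|}$ dominate the $N_i-1$ repulsive Green interactions of points collapsing onto $p_i$ along distinct rays), and the level is bounded below on $\mathcal{B}_0$ because there only the collision term diverges. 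Finally, and most delicately, one must rule out critical points escaping through $\partial\mathcal{D}$ (property (P3)), which requires the blow-up analysis of Lemmas \ref{step0}--\ref{step2}; it is only there that the hypothesis $\alpha_i\ne 0,1,\dots,N-1$ is used, to exclude the resonance $\#Y_1-1=\alpha_i$ in \eqref{gra2}. Your proposal never explains the role of this non-integrality condition, which signals that the compactness of the min--max level --- the actual mathematical content of the theorem beyond the quoted reduction --- is absent from your argument.
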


\begin{remark}\label{rem:compatibility}
Let us observe  that the inequality \eqref{ineq intro} is consistent with the condition \eqref{AA}  provided that 
$$\sum_{i=1}^m\alpha_i<2\ell-\chi(\Sigma)+2\sum_{i=1}^\ell[\alpha_i]^-.$$
Roughly speaking, this requires that the total multiplicity $\sum_{i=1}^m\alpha_i$ has to be controlled by the first $\ell$ orders $\alpha_i$.
\end{remark}

\begin{remark}\label{rem:counterpart2} In \cite{fra&raf} and \cite{fra&raf&dav} also the case when $\Sigma^+$ has only contractible connected components  is addressed, deriving both existence results (under extra assumptions on $\rho_{geo}$, on the $\alpha_i$'s and on the location of the $p_i$'s) and non existence results: in particular the condition  $\rho_{geo}<8\pi \max_{i=1,\ldots,\ell}(1+\alpha_i)$ is required to get existence. By  Theorem \ref{thm:intro contractible} we get new existence results  for any $(\Sigma,g)$: indeed  if $\Sigma^+$ is contractible and the following conditions hold: $$\begin{aligned}&\alpha_1,\ldots,\alpha_\ell\in(0,1],\quad\rho_{geo}\in(8\pi,16\pi),\quad\rho_{geo}\geq 8\pi (1+\alpha_i)\;\;\forall i=1,\ldots,\ell,\end{aligned}$$
 then  the variational approach of \cite{fra&raf} and \cite{fra&raf&dav} breaks down not for technical reasons but being the low sublevels of the Euler Lagrange functional contractible. On the other hand Theorem \ref{thm:intro contractible} allows to produce a wide class of examples in which \ref{liouvgeom} admits a solution even in such situations  (see for instance Example \ref{example:cos}). In particular this provides existence in a perturbative regime allowing larger values of  $\rho_{geo}$ with respect to the papers \cite{fra&raf} and \cite{fra&raf&dav}.
\end{remark}
 
\begin{example}[Existence results for $\rho_{geo}\in(16\pi-\delta,16\pi)$]\label{example:cos}
If $K$ verifies (H1), (H2), (H3), (H4) and $\Sigma^+$ is contractible (consider for example on $(\mathbb{S}^2,g_0)$ the function $K(\phi)=\cos(\phi)$, defined in spherical coordinates, where $\phi$ is the polar angle), then, via Theorem \ref{thm:intro contractible} (with $N=2$), we can perform many configurations  $$p_1,\ldots,p_m\in\Sigma\setminus\partial\Sigma^+,\quad\alpha_1,\ldots,\alpha_m \hbox{ (even with }\alpha_1,\ldots,\alpha_\ell\in(0,1]), \;\;m\geq\ell\geq2$$ such that $$\rho_{geo}\geq 8\pi(1+\alpha_i)\quad \forall i=1,\ldots,\ell,\quad\rho_{geo}\in(16\pi-\delta,16\pi)$$ for a sufficiently small $\delta>0$ and \ref{liouvgeom} admits a solution (see Figure \ref{figureSfera} below). For instance, the case when $m\geq \ell\geq 2$ and $\alpha_i=\alpha$ for all $i=1,\ldots, m$ with $\alpha$ in a small left neighborhood  of $\frac{2}{m}$   satisfies the above conditions together with \eqref{ineq intro} and \eqref{AA}, so solvability is assured   by Theorem \ref{thm:intro contractible}. 
\\ It is worth to notice that none of the situations described above was covered by the results in \cite{fra&raf}.
\begin{figure}[h]
  \centering
  \def\svgwidth{350pt}
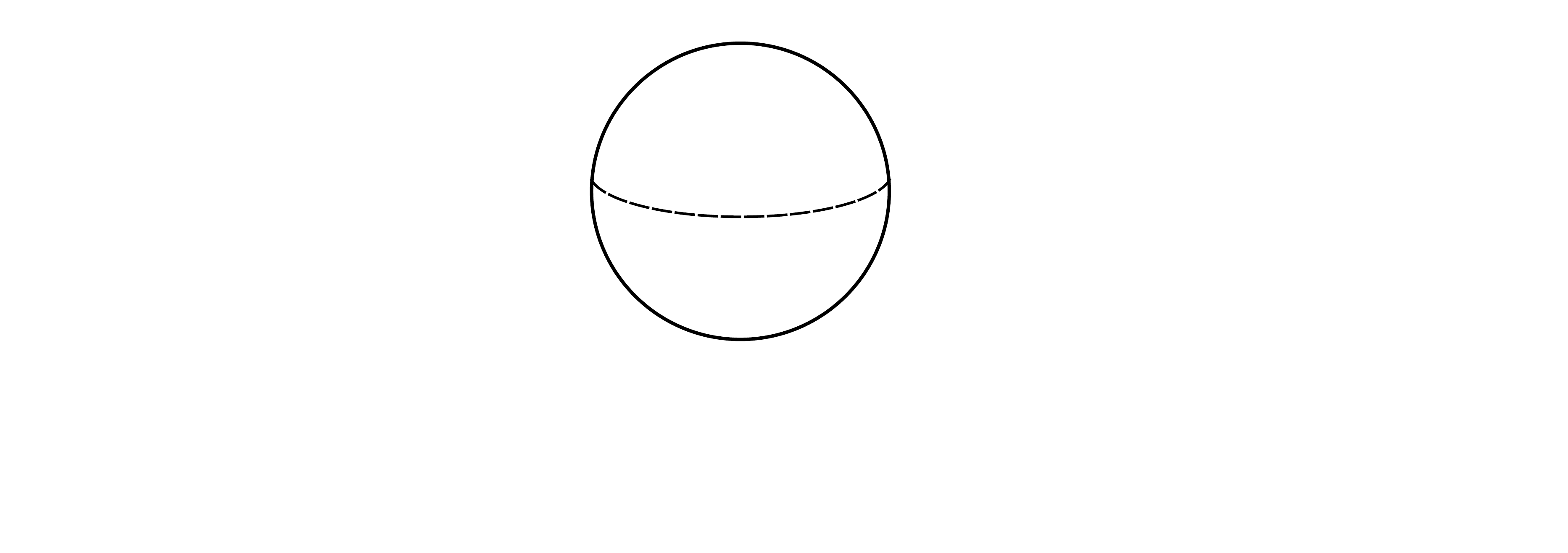
  \caption{$(\mathbb{S}^2,g_0)$, $\Sigma^+$ contractible, $\rho_{geo}= 16\pi-\varepsilon$}
\label{figureSfera}
\end{figure}
\end{example}

\bigskip


\begin{figure}[h]
\bigskip
\bigskip
  \centering
  \def\svgwidth{400pt} 
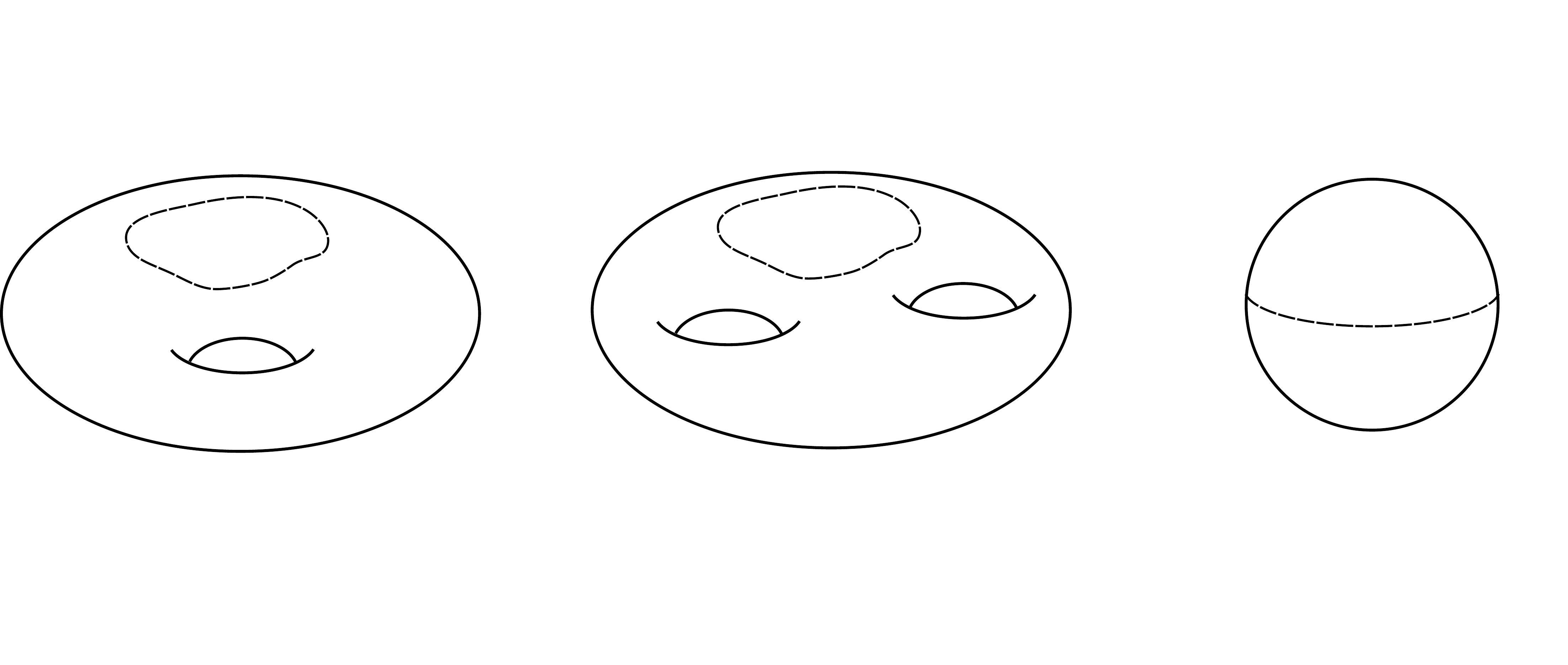
\end{figure}

\bigskip

\begin{figure}[h]
  \centering
\def\svgwidth{400pt}
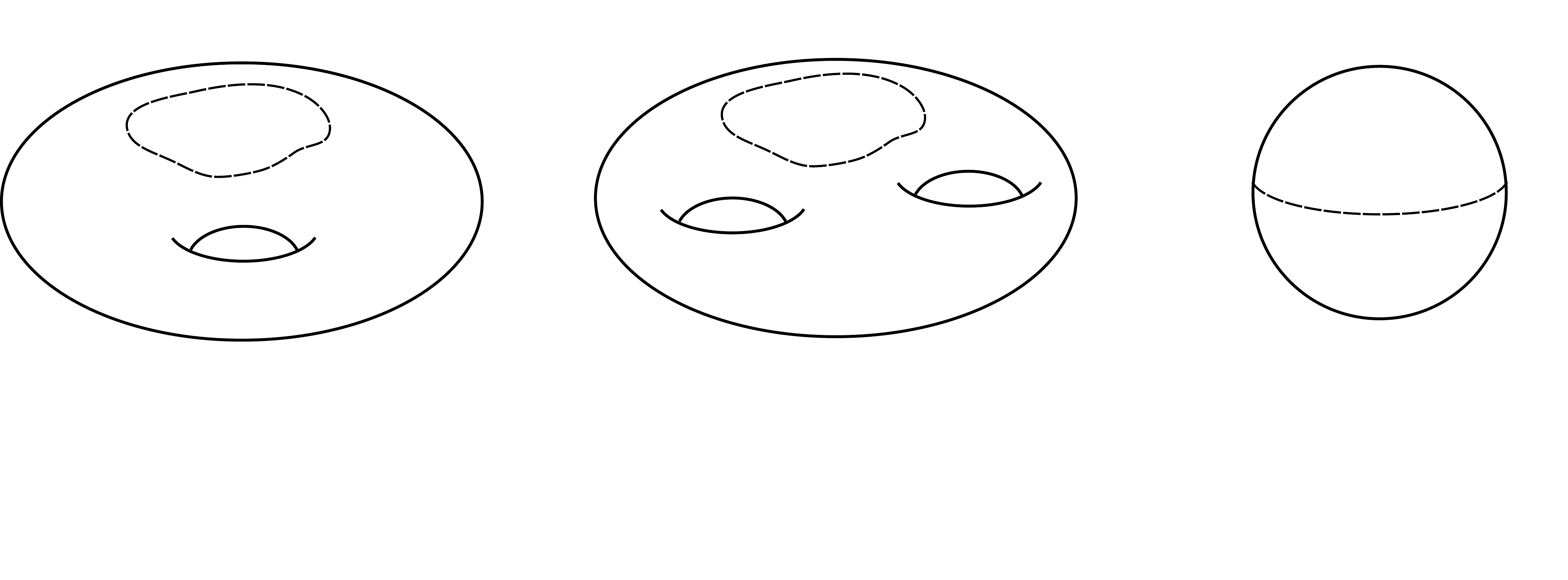
\end{figure}

\bigskip

\begin{figure}[h]
  \centering
\def\svgwidth{400pt}
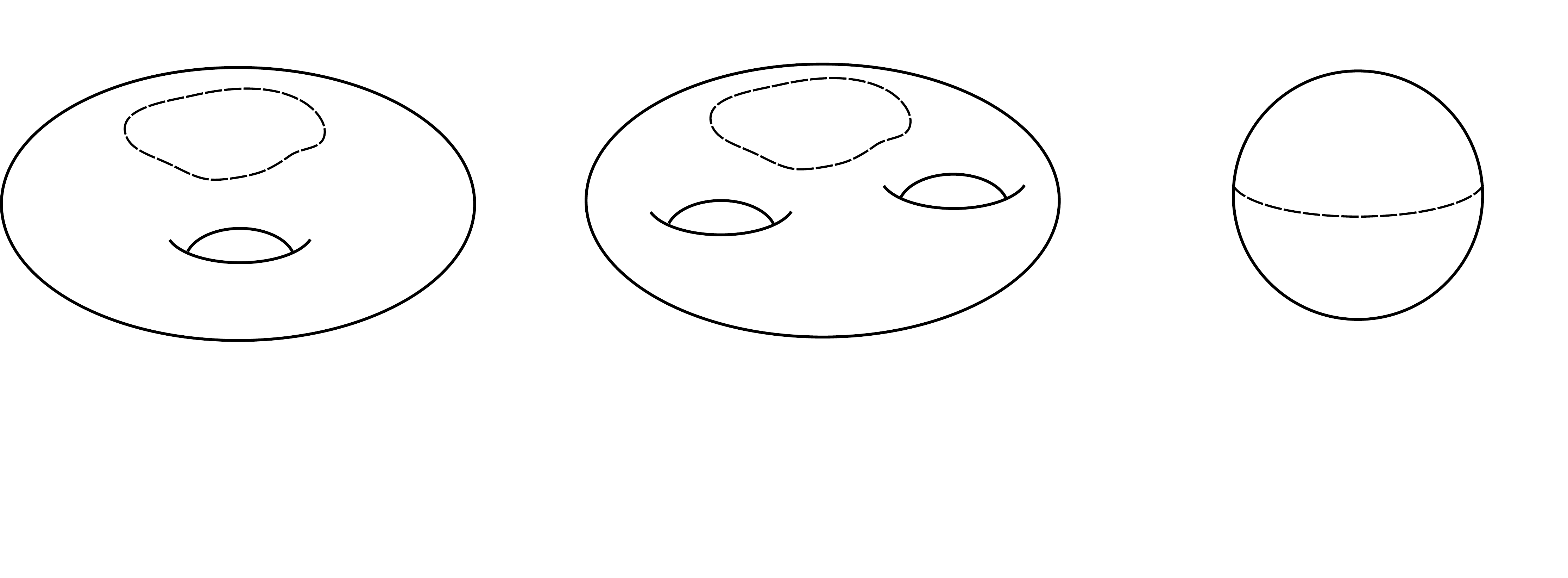
\bigskip
\medskip
  \caption{$\Sigma^+\mbox{ contractible}$, $\rho_{geo}=8\pi N-\varepsilon$, $N\geq 3$}
\label{figure:3examples}
\end{figure}

\begin{example}[Existence results for $\rho_{geo}>16\pi$]\label{example:rho>16pi}
Still by Theorem \ref{thm:intro contractible}, it is also possible to derive a wide class of existence results for $K$ satisfying (H1), (H2), (H3), (H4) with $\Sigma^+$ contractible and $\rho_{geo}>16\pi$ and this is completely new. We just present some concrete examples in Figure \ref{figure:3examples} above (where $K$ is assumed to be a fixed function satisfying the assumptions in Theorem \ref{thm:intro contractible}).
\end{example}

At last, as a direct byproduct of the perturbative approach already applied in \cite{espofigue} to deal with the Liouville equation \ref{liouvhat},   we can provide class of functions $K$ (positive or sign-changing) for which \ref{liouvgeom} is solvable, even in cases in which general existence results are not available. In particular we can also deal with situations when the degree of the equation (computed in \cite{ChenLin}) is zero so that solvability is not known in general, or when there are examples of $K$ for which \ref{liouvgeom} does not admit solutions.

We recall, for instance, that on $(\mathbb{S}^2,g_0)$, if $m=1$ and $\alpha_1>0$:
\begin{itemize}
\item it is proved in \cite{Troy} that \ref{liouvgeom} is not solvable with $K\equiv1$, namely the \emph{tear drop} conical singularity on $\mathbb{S}^2$ does not admit constant curvature (see also \cite{BLT} for a more general non existence result);
\item in the sign-changing case, if $\ell=0$, in \cite{fra&raf} it is shown that for a class of axially symmetric functions $K$, satisfying (H1), (H2), (H3) and (H4) and such that $\Sigma^+$ is contractible, equation \ref{liouvgeom} is not solvable.
\end{itemize}
Whereas on $(\mathbb{S}^2,g_0)$, if $m=3$, $\alpha_1=\alpha_2\in(-\tfrac13,0)$, $\alpha_3>2$: 
\begin{itemize}
\item according to the formula in \cite{ChenLin}, if $K$ is positive, the Leray-Schauder degree of the equation \ref{liouvhat} vanishes for $\rho\in(16\pi,8\pi(3+2\alpha_1))$.
\end{itemize}
Here considering functions $K$ having \emph{sufficiently convex} local minima or \emph{sufficiently concave} local maxima, as a counterpart of the above three non existence statements  we can prove the following three existence result Theorem  \ref{thm:examples1}, Theorem \ref{thm:examples2} and Theorem \ref{thm:examples3}, see Section \ref{Section:ultima} for further details and further examples.

\begin{theorem}
\label{thm:examples1} 
On the standard sphere $(\mathbb{S}^2,g_0)$ with  $m=1$ the following holds:
\begin{itemize}
\item[\emph{(i)}]  for any $N\in\N$ there exists a class of positive functions $K$  such that if 
\[(0<)\ \alpha_1=2(N-1) + \frac{\varepsilon}{4\pi}\quad \mbox{ for $\varepsilon>0$ small enough,}\] then \ref{liouvgeom} admits a solution with $\rho_{geo}=8\pi N + \varepsilon$; 
\item[\emph{(ii)}]  for any $N\in\N$, $N\geq 2$, there exists a class of positive functions $K$  such that if 
\[
(0<)\ \alpha_1=2(N-1) - \frac{\varepsilon}{4\pi}\quad \mbox{ for  $\varepsilon>0$ small enough,}
\] then \ref{liouvgeom} admits a solution with $\rho_{geo}=8\pi N - \varepsilon$; 
\end{itemize}
\end{theorem}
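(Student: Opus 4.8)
Throughout, let me note first that the statements of Theorem~\ref{thm:examples1} (and, below, Theorems~\ref{thm:examples2}--\ref{thm:examples3}) are \emph{not} instances of Theorems~\ref{thm:intro N+}--\ref{thm:intro contractible}: here $K>0$, so (H1) fails and the hypothesis $\Delta_g\log K\le-\beta$ cannot hold globally on a closed surface. The plan is instead to run the Lyapunov--Schmidt reduction of \cite{espofigue} (the same one underlying Theorems~\ref{thm:intro N+}--\ref{thm:intro contractible}, and already used in \cite{tea&pp} for positive $K$), specialised to $(\mathbb S^2,g_0)$ with $m=1$, and then to \emph{construct the class of admissible $K$ by hand}; the other two theorems go exactly the same way. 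First the bookkeeping on the sphere. Since $\chi(\mathbb S^2)=2$ one has $\rho_{geo}=4\pi\chi(\mathbb S^2,\underline\alpha)=8\pi+4\pi\alpha_1$, so $\alpha_1=2(N-1)\pm\frac{\varepsilon}{4\pi}$ is \emph{exactly} the same as $\rho_{geo}=8\pi N\pm\varepsilon$. As $\kappa_{g_0}$ is constant, $\frac{4\pi\chi(\mathbb S^2)}{|\mathbb S^2|}-2\kappa_{g_0}\equiv0$, hence $f_{g_0}\equiv0$, and \eqref{aaa}--\eqref{asym} give the weight $\tilde K(x)=K(x)\,e^{-4\pi\alpha_1 G(x,p_1)}$; in all the cases of the statement $\alpha_1>0$ (this is already why (ii) is restricted to $N\ge2$), so $\tilde K$ is positive and $C^2$ on $\mathbb S^2\setminus\{p_1\}$ and vanishes at $p_1$ like $d_{g_0}(x,p_1)^{2\alpha_1}$. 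In particular no blow-up mass can accumulate at $p_1$ (a singular bubble there would carry mass $8\pi(1+\alpha_1)>\rho_{geo}$), so I look for solutions of \ref{liouvhat} with $\rho=\rho_{geo}$ blowing up at $N$ distinct points of $\mathbb S^2\setminus\{p_1\}$; undoing \eqref{v} then yields the metric $\tilde g=e^{u}g_0$.

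For $\xi=(\xi_1,\dots,\xi_N)$ in the configuration space $M:=(\mathbb S^2\setminus\{p_1\})^N\setminus\{\text{diagonals}\}$ and scales $\delta=(\delta_1,\dots,\delta_N)>0$, I would take the standard ansatz $W_{\delta,\xi}=\sum_{j=1}^N\big(PU_{\delta_j,\xi_j}+\text{l.o.t.}\big)$, with $PU_{\delta,\xi}$ the projected Liouville bubble, expand $J_{\rho_{geo}}(W_{\delta,\xi})$, solve \ref{liouvhat} modulo the $3N$-dimensional approximate kernel spanned by the $\partial_{\delta_j}$- and $\partial_{\xi_j}$-derivatives of the bubbles by the contraction argument of \cite{espofigue}, and optimise in $\delta$. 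Two facts should emerge. First, the scales can be taken positive and $\to0$ \emph{only if} $\Delta_{g_0}\log\tilde K(\xi_j)$ has the sign of $\rho_{geo}-8\pi N$, i.e.\ positive in case (i) and negative in case (ii); since away from $p_1$ one has $\Delta_{g_0}\log\tilde K=\Delta_{g_0}\log K-\frac{4\pi\alpha_1}{|\mathbb S^2|}$, this forces $\log K$ to be strictly convex, resp.\ strictly concave, at the concentration points — and in case (i), quantitatively so. Second, one is left with a reduced functional $\mathcal F_\varepsilon:M\to\R$ whose critical points give genuine solutions and which expands, uniformly on compact subsets of $M$, as
\[
\mathcal F_\varepsilon(\xi)=c(\varepsilon)+\varepsilon\,\mathcal D(\xi)+o(\varepsilon),
\]
with $c(\varepsilon)$ independent of $\xi$ and $\mathcal D$ a shape functional built from $\sum_j\log\tilde K(\xi_j)$, $\sum_{j\ne k}G(\xi_j,\xi_k)$, the Robin function $h(\xi_j,\xi_j)$, and a curvature term proportional to $\sum_j\Delta_{g_0}\log\tilde K(\xi_j)$.

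Next I would engineer $K$ so that $\mathcal D$ has a $C^0$-stable critical point in $M$ compatible with the sign constraint above. Since $(\mathbb S^2,g_0)$ is homogeneous, $h(\xi,\xi)$ is constant, which kills the self-interaction. Pick $N$ pairwise distant points $\xi_1^0,\dots,\xi_N^0\in\mathbb S^2\setminus\{p_1\}$ and let $K$ be any positive $C^2$ function having, near each $\xi_j^0$, a strictly convex local minimum in case (i) (resp.\ a strictly concave local maximum in case (ii)), with $|\Delta_{g_0}\log K(\xi_j^0)|$ large enough to dominate the contribution $\frac{4\pi\alpha_1}{|\mathbb S^2|}$ of the factor $e^{-4\pi\alpha_1 G(\cdot,p_1)}$ (which is substantial, since $\alpha_1\simeq 2(N-1)$) as well as the Green/Robin remainder. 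Then $\tilde K$ inherits a nondegenerate critical point $\xi_j^\ast$ near $\xi_j^0$ of the same type, $\sum_j\log\tilde K(\xi_j)$ dominates $\mathcal D$, the $\xi_j$ stay uniformly away from one another and from $p_1$ (so $G(\xi_j,\xi_k)$ and $\Delta_{g_0}\log\tilde K(\xi_j)$ stay bounded, the latter with the correct sign), and $\mathcal D$ — hence $\mathcal F_\varepsilon$ for $\varepsilon$ small — attains a strict local extremum on a fixed compact subset of $M$. A Brouwer-degree (or direct min/max) argument then produces the critical point, hence the solution $v_\varepsilon$, with $\rho\,\dfrac{\tilde K e^{v_\varepsilon}}{\int_{\mathbb S^2}\tilde K e^{v_\varepsilon}\,dV_{g_0}}\to 8\pi\sum_{j=1}^N\delta_{\xi_j^\ast}$ in the measure sense. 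The restriction to $N\ge2$ in (ii) is precisely the requirement $\alpha_1=2(N-1)-\frac{\varepsilon}{4\pi}>0$ used above: for $N=1$ one would have $\alpha_1<0$, $\tilde K$ would blow up at $p_1$ and a competing singular bubble could form there, which is outside this clean $N$-bubble scheme.

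The delicate point I expect is the reduction itself: one must expand $J_{\rho_{geo}}(W_{\delta,\xi})$ finely enough to exhibit, at order $\varepsilon$, both the scale-fixing relation that pins down the sign of $\Delta_{g_0}\log\tilde K(\xi_j)$ and the curvature term in $\mathcal D$. This is exactly the term responsible for the contrast with the non-existence of a constant-curvature tear drop (\cite{Troy}): there $\tilde K=e^{-4\pi\alpha_1 G(\cdot,p_1)}$ has its only critical point at $-p_1$, where $\Delta_{g_0}\log\tilde K=-\frac{4\pi\alpha_1}{|\mathbb S^2|}<0$ — of the wrong sign for case (i) — so the $\delta$-equation has no admissible solution, whereas a sufficiently convex (resp.\ concave) $K$ reverses this. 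One then has to check that such a $K$ makes the whole $O(\varepsilon)$ part of $\mathcal F_\varepsilon$ attain a strict extremum on the chosen compact region \emph{uniformly} as $\varepsilon\to0^+$. Once the linear theory and the contraction of \cite{espofigue} are transcribed to the present singular setting, the remaining steps are routine.
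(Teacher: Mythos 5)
Your proposal is correct and follows essentially the same route as the paper: the paper quotes the finite-dimensional reduction as Proposition \ref{prop:espofigue} (from \cite{espofigue}) instead of re-deriving it, and packages your ``sufficiently convex local minima / sufficiently concave local maxima'' construction of $K$ as the classes $\mathcal K^{\pm}_{\bar\xi,\alpha_*,\alpha^*}$ of Theorems \ref{prop:example1}--\ref{prop:example2}, obtaining the stable critical point of $\Psi_{\underline\alpha}$ by the same inner-ball versus boundary comparison and the concentration-sign condition via $\mathcal A$. The only imprecision is that the quantity whose sign must match $\rho_{geo}-8\pi N$ is $\mathcal A$, i.e.\ $\Delta_{g_0}\log\tilde K(\xi_j)+\frac{8\pi N}{|\Sigma|}-2\kappa_{g_0}$, rather than $\Delta_{g_0}\log\tilde K(\xi_j)$ alone; since you take $|\Delta_{g_0}\log K(\xi_j^0)|$ large enough to dominate $\frac{4\pi\alpha_1}{|\mathbb S^2|}$, this does not affect your construction.
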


\begin{theorem}
\label{thm:examples2}
On the standard sphere $(\mathbb{S}^2,g_0)$ with  $m=1$ and $\ell=0$ the following holds:
\begin{itemize}
\item[\emph{(i)}]  for any $N\in\N$ there exists a class of functions $K$, satisfying (H1), (H2), (H3), (H4) and with $(\mathbb{S}^2)^+$ contractible, such that if 
\[
(0<)\ \alpha_1=2(N-1) + \frac{\varepsilon}{4\pi}\quad  \mbox{ for $\varepsilon>0$ small enough,}
\] then \ref{liouvgeom} admits a solution with $\rho_{geo}=8\pi N + \varepsilon$; 
\item[\emph{(ii)}] for any $N\in\N$, $N\geq2$, there exists a class of functions $K$, satisfying (H1), (H2), (H3), (H4) and with $(\mathbb{S}^2)^+$ contractible, such that if 
\[ (0<)\ \alpha_1=2(N-1) - \frac{\varepsilon}{4\pi}\quad \mbox{ for  $\varepsilon>0$ small enough,}
\] then \ref{liouvgeom} admits a solution with $\rho_{geo}=8\pi N - \varepsilon$;  
\end{itemize} 
\end{theorem}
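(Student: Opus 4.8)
The strategy is the one of \cite{espofigue}: reduce the geometric problem \ref{liouvgeom} to the Liouville problem \ref{liouvhat} with $\rho=\rho_{geo}=8\pi N\pm\varepsilon$, build approximate solutions that are sums of $N$ standard bubbles, and close the construction by a finite-dimensional Lyapunov--Schmidt reduction. First I would record that, since $m=1$ and $\ell=0$, the singular point $p_1$ lies in $\mathbb{S}^2\setminus\overline{\Sigma^+}$, so by \eqref{aaa} the weight $\tilde K$ is of class $C^2$ and strictly positive on a neighbourhood of $\overline{\Sigma^+}$; moreover (using that $f_{g_0}\equiv 0$ on the round sphere, since $\tfrac{4\pi\chi(\mathbb{S}^2)}{|\mathbb{S}^2|}=2\kappa_{g_0}$, and $\Delta_{g_0}G(\cdot,p_1)=\tfrac1{|\mathbb{S}^2|}$ away from $p_1$) one has $\Delta_{g_0}\log\tilde K=\Delta_{g_0}\log K-\alpha_1\kappa_{g_0}$ on $\Sigma^+$. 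I would then \emph{prescribe} $K$ as follows: fix a small geodesic disk $D\subset\mathbb{S}^2$ and a point $p_1\in\mathbb{S}^2\setminus\overline D$, and take $K\in C^2(\mathbb{S}^2)$ sign-changing with $\Sigma^+=\{K>0\}=D$ (so $\Sigma^+$ is contractible and (H4) holds), $K$ vanishing transversally on $\partial D$ (so (H3) holds), and having in $D$ exactly $N$ non-degenerate extrema $q_1,\dots,q_N$ which are \emph{sufficiently concave positive local maxima} in case (i) (i.e.\ $\Delta_{g_0}\log K(q_j)\le-\beta$ for $\beta$ large) and \emph{sufficiently convex positive local minima} in case (ii) ($\Delta_{g_0}\log K(q_j)\ge\beta$; a ``crater''-type profile of $K$ on $D$ realizes this while keeping $\Sigma^+$ a disk). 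Such functions exist: the transversal vanishing on $\partial D$ automatically forces $\Delta_{g_0}\log K\to-\infty$ there, and on the compact part of $D$ one is free to design the profile of $\log K$ with arbitrarily large concavity/convexity at the $q_j$'s; since $\Delta_{g_0}\log\tilde K$ differs from $\Delta_{g_0}\log K$ only by the constant $-\alpha_1\kappa_{g_0}$, the same holds for $\tilde K$ (in case (ii) also take $\beta$ larger than the range of $\alpha_1\kappa_{g_0}$).

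Next I would run the reduction. For $\rho=8\pi N\pm\varepsilon$ take as approximate solution of \ref{liouvhat} a sum $W_{\delta,\xi}$ of $N$ projected standard Liouville bubbles centred at $\xi=(\xi_1,\dots,\xi_N)$ in the configuration space $\mathcal{M}_N:=\{\xi\in(\Sigma^+)^N : \xi_i\ne\xi_j\text{ for }i\ne j\}$, with concentration parameters $\delta=(\delta_1,\dots,\delta_N)$ fixed, up to lower order, by $\varepsilon$, $\xi$ and $\tilde K$ so that the total mass equals $\rho$; crucially, the sign of $\rho-8\pi N=\pm\varepsilon$ is governed by that of $\Delta_{g_0}\log\tilde K(\xi_j)$, which is why one uses concave maxima in case (i) and convex minima in case (ii) (should the universal sign turn out the other way, cases (i) and (ii) simply interchange the two types of extrema). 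Using the standard non-degeneracy of the linearized Liouville operator I would solve \ref{liouvhat} modulo the $3N$-dimensional approximate kernel generated by $\{\partial_{\delta_j}W,\ \partial_{\xi_j}W\}$, producing a solution $v_{\varepsilon,\xi}=W_{\delta(\varepsilon,\xi),\xi}+\phi_\varepsilon(\xi)$ of the projected equation with $\|\phi_\varepsilon(\xi)\|\to 0$ uniformly on compact subsets of $\mathcal{M}_N$. The problem then reduces to finding, for each small $\varepsilon>0$, a critical point of the reduced energy $\mathcal{J}_\varepsilon(\xi):=J_\rho(v_{\varepsilon,\xi})$ on $\mathcal{M}_N$.

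The heart of the argument is the analysis of $\mathcal{J}_\varepsilon$. Expanding as in \cite{espofigue}, after optimizing in $\delta$, one gets $\mathcal{J}_\varepsilon(\xi)=c_\varepsilon+\varepsilon\,(\Psi_N(\xi)+o(1))$ in $C^1_{\mathrm{loc}}(\mathcal{M}_N)$, where $c_\varepsilon$ collects the constant and $\varepsilon\log\tfrac1\varepsilon$ contributions and
\[ \Psi_N(\xi)=a\sum_{j=1}^N\log\tilde K(\xi_j)+b\sum_{i\ne j}G(\xi_i,\xi_j)+\mathrm{const}, \]
with $a,b$ universal constants of definite sign. Here I would exploit that on $(\mathbb{S}^2,g_0)$ the diagonal part $h(\xi,\xi)$ of the regular part in \eqref{0948} is \emph{constant}, by the symmetry of the round metric, so the usual self-interaction (Robin) term of the reduced energy drops out. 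Since $\tilde K$ has $N$ sufficiently concave (case (i)) / convex (case (ii)) extrema $q_1,\dots,q_N$ in $\Sigma^+$ which I may take pairwise far apart, near the configuration $q=(q_1,\dots,q_N)$ the potential term $a\sum_j\log\tilde K(\xi_j)$ — whose Hessian at $q$ has size $\sim\beta$ — dominates the bounded cross-interaction $b\sum_{i\ne j}G(\xi_i,\xi_j)$; hence for $\beta$ large $\Psi_N$ has a non-degenerate critical point $\xi^\ast=\xi^\ast(\beta)$, with $\xi^\ast\to q$ as $\beta\to\infty$, lying in the interior of $\mathcal{M}_N$ (distinct components, all inside $\Sigma^+$, away from $\partial\Sigma^+$ and from $p_1$). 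Non-degeneracy makes $\xi^\ast$ stable under the $C^1$-small perturbation $o(1)$, so for $\varepsilon$ small $\mathcal{J}_\varepsilon$ has a critical point $\xi_\varepsilon\to\xi^\ast$; this yields a solution of \ref{liouvhat}, hence of \ref{liouvgeom} with $\rho_{geo}=8\pi N\pm\varepsilon$, and undoing \eqref{v} gives the prescribed conformal metric. The measure statement \eqref{conv0}, with $\xi_j^\ast=\lim_\varepsilon(\xi_\varepsilon)_j\in\Sigma^+\setminus\{p_1,\dots,p_\ell\}$, is then immediate from the bubble ansatz. (An alternative in case (i): cluster all $N$ bubbles near a single sufficiently concave maximum, where the confining quadratic from $\log\tilde K$ balances the repulsive Green interaction at scale $\sim\beta^{-1/2}$, again producing a stable critical configuration.)

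The main obstacle I expect is precisely this last, computation-light but delicate, point: certifying that the ``sufficiently concave/convex'' hypothesis forces $\Psi_N$ to have an \emph{interior, stable} critical point — that the strengthened potential beats the logarithmically singular Green interaction, so the bubbles stay distinct, stay inside $\Sigma^+$, and stay away from $p_1$, rather than being driven into collision, to $\partial\Sigma^+$, or onto $p_1$. Two pieces of bookkeeping are subsumed here and must be handled with care: the exact sign of the interaction constant $b$ (equivalently, which of ``concave maximum'' / ``convex minimum'' is the correct feature on each side $\rho_{geo}\to 8\pi N^\pm$, and, relatedly, why case (ii) needs $N\ge 2$), and the precise $\delta$--$\rho$ relation used to absorb the $\pm\varepsilon$. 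Once these are settled, the rest is the by-now standard Lyapunov--Schmidt machinery of \cite{espofigue} together with the elementary construction of $K$ above.
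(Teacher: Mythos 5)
Your strategy is essentially the paper's: the paper proves this by citing the finite--dimensional reduction of \cite{espofigue} (Proposition \ref{prop:espofigue}) as a black box and then constructing, exactly as you do, a class of sign--changing $K$ with $(\mathbb{S}^2)^+$ a disk and with $N$ ``sufficiently convex/concave'' extrema inside it, at which the reduced energy $\Psi$ has a stable critical point because the potential term dominates the (there bounded) Green interaction; your non--degeneracy argument near $q=(q_1,\dots,q_N)$ is a minor variant of the paper's barrier inequality \eqref{seconda}, and re--deriving the Lyapunov--Schmidt scheme instead of citing it changes nothing of substance.

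There is, however, one concrete error you flagged but did not resolve, and it must be fixed: you have the two signs swapped. In Proposition \ref{prop:espofigue} the condition $\mathcal A(\bbm[\xi]^*)>0$ produces solutions for $\rho\in(8\pi N,8\pi N+\delta)$, and from \eqref{matA} the sign of $\mathcal A$ near $\rho_{geo}=8\pi N$ is that of $\Delta_g\log K(\xi_j^*)$ up to the harmless constant $(8\pi N-4\pi\chi(\Sigma,\underline\alpha))/|\Sigma|$ (whence the thresholds $\pm 1/|\Sigma|$ in \eqref{terza}, \eqref{terza2}). Hence case (i), $\rho_{geo}=8\pi N+\varepsilon$, requires \emph{sufficiently convex positive local minima} ($\Delta_{g_0}\log K\geq\beta$, the class $\mathcal K^+$), and case (ii), $\rho_{geo}=8\pi N-\varepsilon$, requires \emph{sufficiently concave positive local maxima} ($\Delta_{g_0}\log K\leq-\beta$, the class $\mathcal K^-$) --- the opposite of your assignment. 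Also, the restriction $N\geq2$ in case (ii) is not a subtle feature of the interaction constant: it is simply that for $N=1$ one would have $\alpha_1=-\varepsilon/4\pi<0$, violating the requirement $\alpha_1>0$ in the statement. With these two points settled, your argument matches the paper's.
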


\begin{theorem} 
\label{thm:examples3} On the standard sphere $(\mathbb{S}^2,g_0)$ with  $m=3$
there exists a class of positive functions $K$ such that if
\[
\left\{
\begin{array}{lr}
\alpha_1=\alpha_2\in (-\tfrac13,0)\\
\alpha_3=2-2\alpha_1+\frac{\varepsilon}{4\pi}\quad \mbox{ for some $\varepsilon>0$ small enough,}
\end{array}
\right.
\]
than \ref{liouvgeom} admits a solution with $\rho_{geo}=16\pi+\varepsilon$.
\end{theorem}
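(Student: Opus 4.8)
The plan is to deduce Theorem~\ref{thm:examples3} as a special case of Theorem~\ref{thm:intro contractible} by exhibiting, on $(\mathbb{S}^2,g_0)$, an explicit family of positive functions $K$ together with a configuration of singular points and orders that meets all the hypotheses of that theorem. First I would fix the target $N=2$, so that $\rho_{geo}=8\pi N-\varepsilon=16\pi-\varepsilon$; to match the statement (which has $\rho_{geo}=16\pi+\varepsilon$) I will instead apply the ``$+\varepsilon$'' companion of Theorem~\ref{thm:intro contractible} coming from the perturbative analysis (the same finite–dimensional reduction produces solutions on both sides of $8\pi N$ depending on the sign of the reduced functional's expansion), so that the constraint reads $\sum_{i=1}^3\alpha_i=2N-\chi(\mathbb{S}^2)+\frac{\varepsilon}{4\pi}=2\cdot 2-2+\frac{\varepsilon}{4\pi}=2+\frac{\varepsilon}{4\pi}$, which with $\alpha_1=\alpha_2\in(-\tfrac13,0)$ forces $\alpha_3=2-2\alpha_1+\frac{\varepsilon}{4\pi}$ exactly as in the statement. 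Since $\alpha_1=\alpha_2<0$, these two points lie in $\mathbb{S}^2\setminus\overline{(\mathbb{S}^2)^+}$, hence $\ell=1$, and only $p_1$ carries the condition $\alpha_1\neq 0,1,\dots,N-1$; wait — with $\ell=1$ the indices $p_2,p_3\in\Sigma\setminus\overline{\Sigma^+}$ (reorder so that the unique point in $\Sigma^+$ is relabelled), and the single relevant order $\alpha\in(-\tfrac13,0)$ automatically avoids $\{0,1\}$. The compatibility inequality \eqref{ineq intro}, namely $N\le \ell+\sum_{i=1}^\ell[\alpha_i]^-$, with $\ell=1$ becomes $2\le 1+[\alpha_3]^-$ where $\alpha_3=2-2\alpha_1+\tfrac{\varepsilon}{4\pi}>2$; since $\alpha_1\in(-\tfrac13,0)$ gives $\alpha_3\in(2,\tfrac{8}{3})$ for $\varepsilon$ small, we get $[\alpha_3]^-=2$, so $2\le 1+2=3$ holds.

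Next I would choose $K$. Theorem~\ref{thm:intro contractible} requires $K\in C^2(\mathbb{S}^2)$ sign-changing with $\nabla K\neq 0$ on $\partial (\mathbb{S}^2)^+$, $p_i\notin\partial(\mathbb{S}^2)^+$, and the crucial condition $\Delta_{g_0}(\log K)\le -\beta<0$ on $(\mathbb{S}^2)^+$. But the statement of Theorem~\ref{thm:examples3} asserts $K$ positive, so $(\mathbb{S}^2)^+=\mathbb{S}^2$, which is \emph{not} contractible and has no boundary — so (H1), (H3), (H4) are vacuous or fail as stated. Hence the correct reduction is not to Theorem~\ref{thm:intro contractible} but to the $K>0$ perturbative machinery referenced at the end of the excerpt (the approach of \cite{espofigue} adapted to the singular equation \ref{liouvgeom}): the relevant hypothesis there is precisely a uniform sign condition on $\Delta_{g_0}(\log K)$ at the concentration points, i.e. $K$ having \emph{sufficiently concave local maxima}. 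So the plan is: take $K$ to be a positive $C^2$ function on $\mathbb{S}^2$ possessing (at least) $N=2$ strict local maxima $\xi_1^*,\xi_2^*\in\mathbb{S}^2\setminus\{p_1,p_2,p_3\}$ at which $\Delta_{g_0}(\log K)(\xi_j^*)<0$ with the Hessian quantitatively dominating the correction terms produced by the reduction (this is the ``sufficiently concave'' requirement), and such that the three prescribed conical points $p_i$ avoid these maxima.

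Then I would run the finite–dimensional Lyapunov--Schmidt reduction for \ref{liouvhat} at $\rho=\rho_{geo}=16\pi+\varepsilon$ (equivalently for the shifted problem \ref{liouvgeom}): build approximate solutions that are sums of $N=2$ standard bubbles centred at points $\xi_1,\xi_2$ near the local maxima, with concentration parameters tuned to $\varepsilon$, solve the infinite–dimensional part by the contraction mapping theorem using the non-degeneracy of the linearised Liouville operator modulo the kernel, and reduce to a finite–dimensional problem for $(\xi_1,\xi_2)$. The reduced energy expands, to leading order in $\varepsilon$, as a constant plus $\varepsilon$ times a functional whose critical points correspond, after the asymptotic analysis of $\tilde K$ near the $p_i$'s via \eqref{asym}, to critical points of a Kirchhoff--Routh-type function built from $\log K$, $\log\tilde K$, the regular part $h$ of the Green function, and the $\alpha_i$-weighted logarithmic potentials. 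Because $K$ is positive and $p_i\notin\{\xi_j^*\}$, the factors $d_{g_0}(x,p_i)^{2\alpha_i}$ in \eqref{asym} are smooth and positive near $\xi_j^*$, so they contribute only lower-order smooth perturbations; the dominant term near each $\xi_j^*$ is governed by $\log K$, and the concavity condition $\Delta_{g_0}(\log K)<0$ there makes each $\xi_j^*$ a strict local maximum of the reduced functional, hence a stable critical point that survives the perturbation. A topological/degree or direct min–max argument on the product of small neighbourhoods of $(\xi_1^*,\xi_2^*)$ then yields a critical point $(\xi_{1,\varepsilon},\xi_{2,\varepsilon})\to(\xi_1^*,\xi_2^*)$, producing a solution $v_\varepsilon$ of \ref{liouvgeom} with $\rho_{geo}=16\pi+\varepsilon$ and the concentration \eqref{conv0} at $\{\xi_1^*,\xi_2^*\}\subset(\mathbb{S}^2)^+\setminus\{p_1,p_2,p_3\}$, which by the change of variables \eqref{v} is equivalent to the existence of the desired conformal metric.

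I expect the main obstacle to be the interaction, in the finite–dimensional reduction, between the two bubbles and between each bubble and the singular points $p_i$: one must control the mutual bubble-interaction term (of order $\log\frac{1}{|\xi_1-\xi_2|}$) and show it does not destroy the local maximum structure coming from $\Delta_{g_0}(\log K)<0$, and one must verify that the $\alpha_i$-weighted Green's function contributions, while smooth near $\xi_j^*$, do not shift the critical points outside the chosen neighbourhoods nor violate the constraint $\rho_{geo}=8\pi N+\varepsilon\notin\Gamma(\underline\alpha_\ell)$ (here $\Gamma(\underline\alpha_\ell)$ with $\ell$ as in \eqref{ell}) needed for the reduction to be well posed — this is exactly where the quantitative ``sufficiently concave local maxima'' hypothesis on $K$ is used, and pinning down how large the concavity must be in terms of $N$, $\beta$, the geometry of $\mathbb{S}^2$, and the $\alpha_i$'s is the delicate bookkeeping at the heart of the proof.
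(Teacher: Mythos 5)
Your instinct to abandon Theorem \ref{thm:intro contractible} (because $K>0$ makes $(\mathbb S^2)^+=\mathbb S^2$, so (H1)--(H4) fail or are vacuous) and to fall back on the finite--dimensional reduction of \cite{espofigue} for positive $K$ is exactly right: the paper proves Theorem \ref{thm:examples3} by applying Proposition \ref{prop:espofigue} through Theorem \ref{prop:example1} and Corollary \ref{cor:ultimo}. But there is a genuine sign error in your choice of $K$. You take $K$ with \emph{sufficiently concave local maxima}, i.e.\ $\Delta_{g_0}\log K<0$ at the concentration points. By \eqref{matA}, since $4\pi\chi(\Sigma,\underline\alpha)=\rho_{geo}=16\pi+\varepsilon=8\pi N+\varepsilon$ with $N=2$, the bracket in $\mathcal A$ equals $\Delta_{g_0}\log K(\xi_j)-\frac{\varepsilon}{|\mathbb S^2|}$, so your choice forces $\mathcal A(\bbm[\xi]^*)<0$; Proposition \ref{prop:espofigue} then yields solutions only for $\rho\in(8\pi N-\delta,8\pi N)$, i.e.\ $\rho_{geo}=16\pi-\varepsilon$, the \emph{wrong side} of $16\pi$. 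To get $\rho_{geo}=16\pi+\varepsilon$ one needs $\mathcal A(\bbm[\xi]^*)>0$, hence $\Delta_{g_0}\log K(\xi_j^*)\geq \frac{1}{|\mathbb S^2|}>0$ at the two concentration points: the correct class is $K$ with \emph{sufficiently convex local minima} there (the class $\mathcal K^+_{\bar\xi,\alpha_*,\alpha^*}$ of \eqref{classeDiK}, conditions \eqref{prima}, \eqref{seconda}, \eqref{terza}), not concave maxima.

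A related conflation: you argue that $\Delta_{g_0}\log K<0$ "makes each $\xi_j^*$ a strict local maximum of the reduced functional, hence a stable critical point". In the paper's argument the sign of $\Delta_{g_0}\log K$ plays no role in producing the stable critical point; it only fixes the sign of $\mathcal A$ and hence the side of $8\pi N$. Stability is obtained separately from the quantitative condition \eqref{seconda}, which forces $\max_{\overline{B_{r/2}}}\Psi_{\underline\alpha}<\min_{\partial B_r}\Psi_{\underline\alpha}$ uniformly in $\underline\alpha\in[\alpha_*,\alpha^*]^m$, so that $\Psi_{\underline\alpha}$ has an interior local \emph{minimum} in $B_r(\bbm[\bar\xi])$ (a local maximum in the companion class $\mathcal K^-$). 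This decoupling is also what lets the paper avoid redoing the Lyapunov--Schmidt reduction and the bubble-interaction estimates you anticipate as the main obstacle: all of that is encapsulated in Proposition \ref{prop:espofigue}, and the only work left is checking its hypotheses a), b), c) for the explicit class of $K$. Once you replace "concave maxima" by "convex minima", the rest of your scheme (with $N=2$, $\sum\alpha_i=2+\frac{\varepsilon}{4\pi}$, $\alpha_*=-\frac12$, $\alpha^*=3$) matches the paper's proof.
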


\

The paper is organized as follows. In Section \ref{section:fdr} we recall the finite-dimensional reduction developed  in \cite{espofigue} for the equation \ref{liouvhat}, which 
is the starting point of our analysis.  In particular in the reduction procedure the crucial role of stable critical points of the reduced energy arises  in the existence of solutions for \ref{liouvhat}. Then we state three general existence results for such  critical
points, which are contained in our Propositions \ref{prop:ptocrit0}, \ref{prop:ptocrit1}, \ref{prop:ptocrit2}.   In Section \ref{section:liouvhat}  we employ the reduction approach in order to  derive solutions for the more general equation \ref{liouvhat}, by which we deduce   Theorem \ref{thm:intro N+}, \ref{thm:intro noncontractible}, \ref{thm:intro contractible} as corollaries. Section \ref{section:minmax} is devoted to the proof of Propositions \ref{prop:ptocrit1}, \ref{prop:ptocrit2} (the proof of Proposition \ref{prop:ptocrit0} is instead immediate), which are at the core of this paper, by carrying out
a min-max scheme.   
At last  in Section \ref{Section:ultima} we focus on the problem \ref{liouvgeom} and we  provide several examples of solvability. 

\tableofcontents

\section{The finite dimension problem}\label{section:fdr}

The starting point for the proofs of Theorems \ref{thm:intro N+}, \ref{thm:intro noncontractible} and \ref{thm:intro contractible} is the \textit{finite dimension variational reduction} which has been carried out for the equation \ref{liouvhat} in the paper \cite{espofigue}, and 
 reduces the problem of finding families of solutions for   \ref{liouvhat} to the problem  of finding critical points of a functional $\Psi(\bbm[\xi])$ defined on a finite dimensional domain.

\
 
For $\bbm[\xi]=(\xi_1,\dots, \xi_N)$ let us introduce the functional
\beq\label{psi0}\begin{aligned}
\Psi(\bbm[\xi])&:= \sum_{j=1}^Nh(\xi_j, \xi_j)+\frac{1}{4\pi} \sum_{j=1}^N\log \tilde K(\xi_j)+\sum_{j,k=1\atop j\neq k}^NG(\xi_j,\xi_k) 
\end{aligned}
\eeq where $\tilde{K}$ is defined in \eqref{aaa},   $G$ denotes the Green function of $-\Delta_g$ over $\Sigma$ and $h$ its regular part as
in \eqref{0948}. $\Psi$ is well defined in the set
\beq\label{emme}{\cal M}^+:=(\Sigma^+\setminus\{p_1,\dots,p_\ell\})^N\setminus \Delta,\qquad \Delta:=\Big\{\bbm[\xi]\in \Sigma^N\,\Big|\,  \xi_j= \xi_k\,\hbox{ for some } j\neq k\Big\}.\eeq

\noindent The definition of $\Psi$ depends on the particular $\underline{\alpha}=(\alpha_1,\ldots,\alpha_m$) owing to \eqref{aaa}.
 To emphasize this fact sometimes we will write $\Psi_{\underline{\alpha}}(\bbm[\xi])$ in the place of  $\Psi(\bbm[\xi])$.

\

\noindent In order  to state the relation between the critical points of $\Psi$ and the solutions of \ref{liouvhat} let  us recall the notion of \textit{stable} critical point, which was introduced in \cite{li} in the analysis of concentration phenomena in nonlinear Schr\"odinger equations.

 \begin{definition}\label{defli} A critical point $\bbm[\xi]\in {\cal M}^+$ of $\Psi$ is \textit{stable} if for any neighborhood $U$ of $\bbm[\xi]$ in ${\cal M}^+$ there exists $\delta>0$ such that if $\|F-\Psi\|_{{\cal C}^1}\leq \delta$, then $F$ has at least one critical point in $U$. In particular, any (possibly degenerate) local minimum or maximum point is \textit{stable}, as well as  any non degenerate critical point and any  isolated critical point  with non-trivial local degree.
 \end{definition}

\

\noindent Next,  for $\bbm[\xi]=(\xi_1,\dots, \xi_N)\in{\cal M}^+$ we introduce the function
\beq\label{matA}\begin{aligned}
\mathcal A(\bbm[\xi])&:=4\pi\sum_{j=1}^N\tilde K(\xi_j)e^{8\pi h(\xi_j,\xi_j)+8\pi\sum_{k\neq j}G(\xi_j,\xi_k)}\bigg[\Delta_g \log\tilde K(\xi_j)+\frac{8\pi N}{|\Sigma|}-2\kappa_g(\xi_j)\bigg]\\
&=4\pi\sum_{j=1}^N \tilde K(\xi_j)e^{8\pi h(\xi_j,\xi_j)+8\pi\sum_{k\neq j}G(\xi_j,\xi_k)}\bigg[\Delta_g \log K(\xi_j)+\!\frac{8\pi N\!- \!4\pi\chi(\Sigma,\underline{\alpha})}{|\Sigma|}\!\bigg].\end{aligned}\eeq

\

Then, the variational reduction method developed in  \cite{espofigue} gives the following result, where the role of \emph{stable} critical points of $\Psi$ arises in the existence of solutions of \ref{liouvhat}. Even though in \cite{espofigue} only the case of positive orders is considered, one can easily check that  the proof continue to hold also for $\alpha_i>-1$.

\begin{proposition}[\cite{espofigue}]\label{prop:espofigue}
Let $N,m\in\mathbb N$. Assume that   $K:\Sigma\rightarrow \mathbb R$ is a  $C^2$ function  and  $p_i\in \Sigma$ for $i=1,\ldots, m$.  
Then for any  $-1<\alpha_{\star}<\alpha^\star$  there exists $\delta=\delta(\alpha^{\star},\alpha_{\star})>0$   such that if:
\begin{itemize}
\item[$a)$] 
 $\alpha_{\star}\leq\alpha_i\leq \alpha^{\star}$ for any $i=1,\ldots, m$,
\item[$b)$]  $\bbm[\xi^{\ast}]\in\mathcal M^+$ is such that $\mathcal A(\bbm[\xi^{\ast}])>0$ ($<0$ resp.),
\item[$c)$] $\bbm[\xi^{\ast}]$ is a stable critical point of $\Psi_{\underline{\alpha}}$, 
\end{itemize}
 then for all $\rho\in(8\pi N,8\pi N+\delta)$ ($\rho\in(8\pi N-\delta,8\pi N)$ resp.) there is a solution $v_\rho$ of \ref{liouvhat}.
 \\
 Moreover 
\beq\label{conv}\rho\frac{\tilde K(x)e^{v_\rho}}{\int_{\Sigma}\tilde K(x)e^{v_\rho}dV_g}\to 8\pi \sum_{j=1}^N\delta_{\xi_j^\ast}\quad\hbox{ as }\rho\to 8\pi N
\eeq 
in the measure sense.
\end{proposition}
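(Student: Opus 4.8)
The plan is to realize the solutions of \ref{liouvhat} via a Lyapunov--Schmidt finite dimensional reduction built around an explicit family of approximate solutions, each modeled on $N$ standard bubbles concentrated near the points $\xi_1,\dots,\xi_N$. First I would fix $\rho$ close to $8\pi N$ and, for each $\bbm[\xi]\in\mathcal M^+$, construct an approximate solution $W_{\bbm[\xi]}$ obtained by gluing, in normal coordinates around each $\xi_j$, the projected standard bubble $U_{\delta_j,\xi_j}(x)=\log\frac{8\delta_j^2}{(\delta_j^2+|x-\xi_j|^2)^2}$ corrected by the regular part $h$ of the Green function so that the singular logarithmic behaviour matches $8\pi G(\cdot,\xi_j)$; the concentration parameters $\delta_j=\delta_j(\bbm[\xi],\rho)$ are chosen (as in \cite{espofigue}) to cancel the leading error terms, which forces a relation of the form $\log\frac{1}{\delta_j}\sim\frac{\rho-8\pi N}{8\pi}\cdot(\cdots)+\text{(terms involving }\tilde K(\xi_j),h,G)$. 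One checks that $\tilde K\in C^{1,\beta}_{\mathrm{loc}}(\Sigma\setminus\{p_1,\dots,p_\ell\})$ away from the singular points because of \eqref{asym}, and on $\mathcal M^+$ one has $\tilde K>0$, so the construction makes sense precisely on the configuration space $\mathcal M^+$; the hypothesis $\alpha_\star\le\alpha_i\le\alpha^\star$ enters to make all estimates uniform.

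Next I would set up the linearized operator $L_{\bbm[\xi]}$ at $W_{\bbm[\xi]}$ acting on $H^1(\Sigma)$ and analyze its kernel. The approximate kernel is $2N$-dimensional, spanned near each $\xi_j$ by the rescaled functions $\partial_{\delta_j}U$ and $\partial_{(\xi_j)_a}U$, $a=1,2$; working in the orthogonal complement of this span (with respect to a suitably weighted inner product) one proves that $L_{\bbm[\xi]}$ is invertible there with inverse bounded by $C|\log(\rho-8\pi N)|$, uniformly for $\bbm[\xi]$ in compact subsets of $\mathcal M^+$ and for $\rho$ near $8\pi N$. A contraction mapping argument then yields, for each such $\bbm[\xi]$, a unique small solution $\phi_{\bbm[\xi]}$ of the projected nonlinear problem, $C^1$ in $\bbm[\xi]$, with $\|\phi_{\bbm[\xi]}\|$ controlled by the size of the error $\|L_{\bbm[\xi]}W_{\bbm[\xi]}-(\text{RHS})\|$. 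The genuine solution of \ref{liouvhat} is then $W_{\bbm[\xi]}+\phi_{\bbm[\xi]}$ provided the $2N$ Lagrange multipliers vanish.

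The reduced problem is to solve $c_{ja}(\bbm[\xi])=0$ for all $j,a$, and the key computation — carried out in \cite{espofigue} — is that this system is equivalent to $\bbm[\xi]$ being a critical point of a reduced energy $\mathcal J_\rho(\bbm[\xi]):=J_\rho(W_{\bbm[\xi]}+\phi_{\bbm[\xi]})$ which admits the $C^1$-expansion $\mathcal J_\rho(\bbm[\xi])=\text{const}_\rho-(\rho-8\pi N)\,\big[\,\text{const}+\Psi_{\underline\alpha}(\bbm[\xi])+o(1)\,\big]$ in $C^1_{\mathrm{loc}}(\mathcal M^+)$ as $\rho\to 8\pi N$, where the $o(1)$ is uniform on compacta; the sign in front of the bracket is such that the relevant quantity to be made critical is $\Psi_{\underline\alpha}$ itself (the constant prefactor involves $\log(\rho-8\pi N)$ but does not affect the location of critical points). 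Hence if $\bbm[\xi^\ast]$ is a \emph{stable} critical point of $\Psi_{\underline\alpha}$ in the sense of Definition \ref{defli}, then by stability the $C^1$-close functional $-\frac{1}{\rho-8\pi N}\mathcal J_\rho(\cdot)+\text{const}$ has a critical point $\bbm[\xi_\rho]$ near $\bbm[\xi^\ast]$ for all $\rho$ sufficiently close to $8\pi N$, which produces the desired solution $v_\rho=W_{\bbm[\xi_\rho]}+\phi_{\bbm[\xi_\rho]}$. The role of hypothesis $b)$, the sign of $\mathcal A(\bbm[\xi^\ast])$, is twofold: it determines on which side of $8\pi N$ the parameter $\rho$ must lie (the solvability of the $\delta_j$-equations with $\delta_j\in(0,1)$ small requires $(\rho-8\pi N)$ to have the sign of $\mathcal A$, since $\mathcal A(\bbm[\xi^\ast])$ is, up to a positive factor, the coefficient coupling $\rho-8\pi N$ to $\sum_j\log\frac1{\delta_j}$), and it guarantees the error estimates close. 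Finally, the concentration statement \eqref{conv} follows because $W_{\bbm[\xi_\rho]}+\phi_{\bbm[\xi_\rho]}$, after multiplication by $\rho\tilde K e^{(\cdot)}/\int\tilde K e^{(\cdot)}$, converges in the measure sense to $8\pi\sum_j\delta_{\xi_j^\ast}$ by the mass quantization of each bubble and $\|\phi_{\bbm[\xi_\rho]}\|\to0$, together with $\bbm[\xi_\rho]\to\bbm[\xi^\ast]$ along a subsequence (and along the full family if $\bbm[\xi^\ast]$ is isolated). The main obstacle is the sharp invertibility of the linearized operator with the precise logarithmic loss and, correspondingly, the $C^1$ — not merely $C^0$ — expansion of the reduced energy, since it is the $C^1$ closeness that makes Definition \ref{defli} applicable; but all of this is exactly what is established in \cite{espofigue}, and the only point to verify here is that nothing in that argument used positivity of the $\alpha_i$'s rather than just $\alpha_i>-1$, which is the case because $\alpha_i$ enters only through the locally Hölder function $\tilde K$ and the fixed points $p_i$, kept away from the concentration set $\mathcal M^+$.
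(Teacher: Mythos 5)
Your proposal is correct and follows essentially the same route as the paper, which in fact offers no proof of its own here but simply imports the Lyapunov--Schmidt reduction of \cite{espofigue}; your sketch accurately reconstructs that scheme (bubbling ansatz, invertibility of the linearization modulo the $2N$-dimensional approximate kernel, $C^1$-expansion of the reduced energy in terms of $\Psi_{\underline\alpha}$, and the sign of $\mathcal A$ selecting the side of $8\pi N$). You also correctly isolate the only two points the present paper actually adds to the citation, namely that the argument uses only $\alpha_i>-1$ rather than positivity and that $\delta$ can be taken uniform for $\alpha_\star\leq\alpha_i\leq\alpha^\star$.
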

We point out that the above proposition is stated here in a slightly more general way than in \cite[Theorem 1.1]{espofigue}; precisely in our formulation we   stress that the number $\delta$ can be chosen uniformly for bounded values of $\alpha_i$  away from $-1$.

\

\begin{remark}[Condition $b)$]
\label{rem:A<0} If the function $K$ and the orders $\alpha_1,\ldots,\alpha_m$ satisfy 
\begin{equation}\label{condizioneKperA<o}\displaystyle \sup_{\xi\in\Sigma^+}(\Delta_g\log K(\xi))<\frac{(4\pi\chi(\Sigma,\underline{\alpha})-8\pi N)}{|\Sigma|},
\end{equation}
then we have $\mathcal A(\bbm[\xi])< 0$ for all $\bbm[\xi]\in \cal M^+$, and, consequently, condition $b)$ in Proposition \ref{prop:espofigue} is satisfied. In particular \eqref{condizioneKperA<o} holds if there exists $\beta>0$ such that 
\beq\label{condiz}\sup_{\xi\in\Sigma^+}(\Delta_g\log K(\xi))\leq -\beta\quad\&\quad \chi(\Sigma,\underline{\alpha})> 2 N-\frac{\beta|\Sigma|}{4\pi}.\eeq
Observe that these two conditions are indeed assumed in Theorems \ref{thm:intro N+}, \ref{thm:intro noncontractible}, \ref{thm:intro contractible}.
In Section \ref{Section:ultima}  we will also prove other existence results for \ref{liouvhat} without assuming  \eqref{condiz} but
exhibiting classes of functions $K$, sign-changing or also positive, (and values of   $\alpha_i$'s) for
which $\cal A<0$ in suitable subsets of ${\cal M}^+$ where one can  find a local maximum of $\Psi_{\underline{\alpha}}$ (see Theorem  \ref{prop:example2}).
\end{remark}

\begin{remark}\label{rem:A>0} We notice that, since $K>0$ on $\Sigma^+$ and $K=0$ on $\partial \Sigma^+$, one cannot have $\Delta_gK>0$ on $\Sigma^+$, so it is not possible   to find a general reasonable explicit sufficient condition to guarantee ${\cal A}(\bbm[\xi])>0$ for all $\bbm[\xi]\in{\cal M}^+$ similar to \eqref{condiz} above.
Nevertheless  in Section \ref{Section:ultima}  we will provide  examples of functions $K$ for which  a stable critical point $\bbm[\xi]^*$ exists for $\Psi_{\underline{\alpha}}$ and satisfies  condition $\mathcal A(\bbm[\xi]^*)> 0$, 
yielding solvability of problem \ref{liouvhat} thanks to Proposition \ref{prop:espofigue} (see Theorem \ref{prop:example1}). \end{remark}

\noindent We discuss now sufficient conditions for assumption $c)$ in Proposition \ref{prop:espofigue} to hold. The following first result is immediate and  deals with the case when $\Sigma^+$ has a sufficiently large number of connected components. 
\begin{proposition}
\label{prop:ptocrit0} Let $N\in\N$ and $\alpha_1,\ldots, \alpha_m>-1$. Assume that $\Sigma^+$ consists of $N^+$ connected components with $N\leq N^+,$  hypotheses (H1), (H2) hold and, in addition, \beq\label{acca0}\alpha_i>0\quad \forall i=1,\ldots, \ell.\eeq Then
the functional $\Psi$ admits a local maximum $\bbm[\xi]^*=(\xi_1^*,\ldots, \xi_N^*)\in\mathcal M^+$
with each  point $\xi_j^*$ belonging to a separate  connected component of $\Sigma^+$.
\end{proposition}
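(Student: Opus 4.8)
The plan is to construct the local maximum by restricting the functional $\Psi$ to a suitable compact region of $\mathcal M^+$ on which $\Psi$ attains an interior maximum. First I would fix $N$ distinct connected components $\Sigma_1^+,\ldots,\Sigma_N^+$ of $\Sigma^+$ (possible since $N\le N^+$), and choose for each $j$ a compact set $\overline{\Omega_j}\subset \Sigma_j^+\setminus\{p_1,\ldots,p_\ell\}$ such that $\Sigma_j^+\setminus\{p_1,\ldots,p_\ell\}$ retracts suitably onto it; actually the cleanest choice is to let $\Omega_j$ exhaust $\Sigma_j^+$. The key point is to show that if $\bbm[\xi]$ has its $j$-th coordinate $\xi_j$ approaching either $\partial\Sigma_j^+$ or one of the singular points $p_i$ with $i\le\ell$ lying in $\Sigma_j^+$, or if two coordinates $\xi_j,\xi_k$ in the same component collide, then $\Psi(\bbm[\xi])\to-\infty$. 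Since the components are separate, the $N$ coordinates live in disjoint components, so collisions can only occur within a component; but each component will be forced to contain exactly one point, so in fact the relevant degeneration is only $\xi_j\to\partial\Sigma_j^+$ or $\xi_j\to p_i$. Granting this, $\Psi$ is a continuous function on $\prod_j(\Sigma_j^+\setminus\{p_1,\ldots,p_\ell\})$ tending to $-\infty$ at the boundary of this set, hence it attains its maximum at an interior point $\bbm[\xi]^*$, which is therefore a local (indeed global, on this open subset) maximum of $\Psi$ in $\mathcal M^+$, and by Definition \ref{defli} it is a stable critical point.

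The heart of the matter is thus the boundary behaviour of $\Psi$. Recall
\[
\Psi(\bbm[\xi])=\sum_{j=1}^N h(\xi_j,\xi_j)+\frac1{4\pi}\sum_{j=1}^N\log\tilde K(\xi_j)+\sum_{j\ne k}G(\xi_j,\xi_k).
\]
The term $\sum_j h(\xi_j,\xi_j)$ is continuous and bounded on all of $\Sigma^N$ by \eqref{0948}. The cross terms $G(\xi_j,\xi_k)$ with $j\ne k$: since $\xi_j\in\Sigma_j^+$ and $\xi_k\in\Sigma_k^+$ with $j\ne k$ lie in disjoint closed components, we have $d_g(\xi_j,\xi_k)\ge c>0$ uniformly, so by \eqref{0948} these terms stay bounded (no $-\infty$ contribution, and no $+\infty$ either). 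So the only source of divergence is $\frac1{4\pi}\sum_j\log\tilde K(\xi_j)$. By \eqref{asym}, near $p_i$ with $i\le\ell$ (so $p_i\in\Sigma^+$, and $\alpha_i>0$ by \eqref{acca0}) we have $\tilde K(x)\simeq K(x)\,d_g(x,p_i)^{2\alpha_i}e^{\gamma_i(x)}$, and since $K(p_i)>0$ and $\alpha_i>0$, $\log\tilde K(x)\to-\infty$ as $x\to p_i$; likewise as $\xi_j\to\partial\Sigma_j^+$ we have $K(\xi_j)\to 0^+$ so $\log\tilde K(\xi_j)\to-\infty$ (here one uses that $\tilde K$ and $K$ differ only by a bounded factor away from the $p_i$'s, by \eqref{aaa} and \eqref{0948}, since $\partial\Sigma^+$ contains no $p_i$ by (H4)). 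In every degenerating configuration at least one coordinate $\xi_j$ runs to $\partial\Sigma_j^+$ or to an interior $p_i$, driving $\Psi\to-\infty$ while all other terms remain bounded; this gives the required coercivity (from above) on each component.

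I expect the main technical nuisance—rather than a genuine obstacle—to be bookkeeping the interplay between $\tilde K$ and $K$: one must check that on the compact part of $\Sigma_j^+$ away from the $p_i$'s the factor $e^{f_g-4\pi\sum_i\alpha_i G(\cdot,p_i)}$ is bounded above and below by positive constants (immediate from $f_g\in C^\infty$ and $G(\cdot,p_i)$ being smooth off $p_i$), and that the blow-up of $G(\cdot,p_i)$ for $i\le\ell$ enters $\log\tilde K$ with the sign $-4\pi\alpha_i\cdot\frac1{2\pi}\log\frac1{d_g}= +2\alpha_i\log d_g\to-\infty$, consistent with the $\alpha_i>0$ hypothesis. (If some $p_i$ with $i\le\ell$ did not lie in the chosen components $\Sigma_1^+,\ldots,\Sigma_N^+$ there is nothing to check there; if it does, the above handles it.) No min-max is needed here, which is exactly why the proposition is flagged as immediate. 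Once $\bbm[\xi]^*$ is produced as an interior maximum, stability is automatic, and the statement that each $\xi_j^*$ lies in a separate component is built into the construction.
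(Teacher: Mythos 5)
Your proposal is exactly the argument the paper has in mind (the paper declares the proof of Proposition \ref{prop:ptocrit0} immediate and omits it): confine each $\xi_j$ to a distinct connected component of $\Sigma^+$, note that on this open product set the only potentially unbounded contribution to $\Psi$ is $\frac{1}{4\pi}\sum_j\log\tilde K(\xi_j)$, and use $\alpha_i>0$ for $i\le\ell$ together with $K=0$ on $\partial\Sigma^+$ to get $\Psi\to-\infty$ at the boundary of the product, so that the supremum is attained at an interior point, which is then a local maximum in ${\cal M}^+$ and hence stable in the sense of Definition \ref{defli}. The sign bookkeeping $-4\pi\alpha_i G(\cdot,p_i)=2\alpha_i\log d_g(\cdot,p_i)+O(1)\to-\infty$ is correct, as is the reduction to degenerations of a single coordinate.

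The one step that does not follow from the hypotheses as stated is the claim that $d_g(\xi_j,\xi_k)\ge c>0$ uniformly for $\xi_j,\xi_k$ in distinct components because these are ``disjoint closed components'': distinct connected components of the open set $\Sigma^+$ are disjoint, but their closures need not be. Proposition \ref{prop:ptocrit0} assumes only (H1)--(H2), not (H3), so a degenerate zero of $K$ at which two components of $\Sigma^+$ touch (think of $K$ behaving like $xy$ near such a point) is not excluded. When $\xi_j,\xi_k$ approach such a point from the two components, $G(\xi_j,\xi_k)\to+\infty$ competes with $\log K(\xi_j)+\log K(\xi_k)\to-\infty$; a Taylor expansion of $K$ at a zero lying on the geodesic between them shows that $K(\xi_j)K(\xi_k)\le C\,d_g(\xi_j,\xi_k)^4$, so $\Psi$ remains bounded above, but it need not tend to $-\infty$, and the coercivity argument no longer yields an interior maximum. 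Under (H3) the difficulty disappears ($\nabla K\neq0$ on $\partial\Sigma^+$ forces distinct components to have disjoint closures, whence the uniform separation you assert), so to make the proof airtight one should either invoke (H3), assume the $N$ chosen components have disjoint closures, or treat the touching case separately. This caveat applies equally to the paper's own ``immediate'' proof, which supplies no more detail than you do.
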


\noindent The proofs of the next two results is quite involved and  will be developed in Section \ref{section:minmax}.

\begin{proposition}\label{prop:ptocrit1}
Let $N\in\N$ and $\alpha_1,\ldots, \alpha_m>-1$. Suppose that $\Sigma^+$ has a non contractible connected component,  hypotheses (H1), (H2), (H3), (H4) hold and, in addition,
\beq\label{acca5}\alpha_i\neq 0,1,\ldots, N-1\quad \forall i=1, \ldots,\ell
.\eeq Then $\Psi$ has a stable critical point $ \bbm[\xi]^*\in{\cal M}^+$.
\end{proposition}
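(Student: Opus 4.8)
The plan is to construct a stable critical point of $\Psi$ by a min-max (linking-type) argument exploiting the nontrivial topology of one non-contractible component $\Omega_0$ of $\Sigma^+$. First I would isolate the structure: write $\Psi(\bbm[\xi]) = \sum_j \big(h(\xi_j,\xi_j) + \tfrac{1}{4\pi}\log\tilde K(\xi_j)\big) + \sum_{j\ne k} G(\xi_j,\xi_k)$ and observe the two competing effects — the Green's-function interaction terms $G(\xi_j,\xi_k)\to +\infty$ when two points collide (so $\Psi\to+\infty$ near the diagonal $\Delta$), while $\tfrac{1}{4\pi}\log\tilde K(\xi_j)\to -\infty$ as $\xi_j$ approaches $\partial\Sigma^+$, and also as $\xi_j\to p_i$ for $i\le\ell$ with $\alpha_i<0$ (by the asymptotics \eqref{asym}, $\tilde K\simeq K\,d_g(x,p_i)^{2\alpha_i}e^{\gamma_i}$). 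Condition \eqref{acca5} is exactly what guarantees that near each $p_i\in\Sigma^+$ the function $\Psi$ is either repelled from $p_i$ (when $\alpha_i$ is non-integer negative or, more precisely, when the relevant local exponent makes the barrier effective) or the puncture can be safely circumnavigated — the hypothesis $\alpha_i\neq 0,1,\dots,N-1$ prevents a degenerate balance between the $2\alpha_i\log d_g(\cdot,p_i)$ singularity of $\tilde K$ and the sum of $N-1$ logarithmic interaction terms $\sum_{k\ne j}G(\xi_j,\xi_k)$ that could otherwise accumulate at $p_i$. So the first real step is a compactness/barrier lemma: sublevel sets $\{\Psi\le c\}$ (for $c$ below a threshold determined by these barriers) stay in a compact subset of $\mathcal M^+$ bounded away from $\Delta$, from $\partial\Sigma^+$, and from the punctures $p_1,\dots,p_\ell$.

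Next I would set up the min-max. Fix a non-contractible loop $\gamma_0\colon S^1\to\Omega_0$ which is non-trivial in $\pi_1(\Omega_0)$ (hence detects non-trivial $H_1$, since $\Omega_0$ is a surface with boundary, so $\pi_1$ is free and non-trivial loops survive in homology). Place the first point $\xi_1$ on (a family of loops homotopic to) $\gamma_0$ inside $\Omega_0$, and distribute the remaining $N-1$ points in other components of $\Sigma^+$ (or in $\Omega_0$ far from $\gamma_0$), localized near max points of the relevant single-point energy $h(\xi,\xi)+\tfrac1{4\pi}\log\tilde K(\xi)$. The min-max class $\mathcal F$ would consist of sets of the form (deformations of) $\{\gamma_0(S^1)\}\times\{\text{localized configurations}\}$, and one shows the min-max value $c = \inf_{A\in\mathcal F}\max_{\bbm[\xi]\in A}\Psi(\bbm[\xi])$ is finite and, crucially, strictly below the barrier threshold from the compactness step, while strictly above $\max$ over a suitable "boundary" linking set (a configuration where $\xi_1$ is pushed to a point where $\gamma_0$ could be contracted, forcing a descent in $\Psi$ — here the non-contractibility is used to get the strict inequality $\sup_{\partial} \Psi < c$). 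Standard deformation-lemma arguments (using that $\Psi\in C^1(\mathcal M^+)$ and the Palais–Smale-type condition supplied by the compactness of sublevels) then yield a critical point at level $c$.

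Stability in the sense of Definition \ref{defli} comes essentially for free from the min-max construction: the min-max value $c$ is defined through a class $\mathcal F$ that is stable under $C^1$-small perturbations of the functional (the linking persists), and the strict separation of levels $\sup_\partial \Psi < c < (\text{barrier})$ is an open condition, so any $F$ with $\|F-\Psi\|_{C^1}$ small has a critical point in the same neighborhood $U$; this is the standard way min-max critical points are shown to be stable. The main obstacle I anticipate is the compactness/barrier analysis near the punctures $p_i\in\Sigma^+$ with $-1<\alpha_i<0$: one must carefully quantify, using \eqref{asym} and the decomposition \eqref{0948}, the precise rate at which $\Psi\to\pm\infty$ as a subset of the $\xi_j$'s collapses onto a puncture while possibly simultaneously colliding with each other — the worst case being several points clustering at $p_i$, where the $+\infty$ from mutual interactions competes with the $-\infty$ (or $+\infty$, depending on the sign of $\alpha_i$) from $\log\tilde K$. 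Making \eqref{acca5} precisely the condition that rules out a marginal (non-coercive) balance in every such clustering scenario, and extracting from it a uniform barrier height, is the technical heart of the argument; the topological linking part, by contrast, is fairly routine once the correct compact "arena" in $\mathcal M^+$ has been pinned down.
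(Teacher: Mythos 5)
Your proposal has a genuine gap at its foundation: the compactness claim is false, and with it the whole ``barrier + Palais--Smale + deformation lemma'' framework collapses. The sublevel sets $\{\Psi\le c\}$ are \emph{not} bounded away from $\partial\Sigma^+$ or from the punctures with $\alpha_i>0$: since $\frac{1}{4\pi}\log\tilde K(\xi_j)\to-\infty$ as $\xi_j\to\partial\Sigma^+$, and $-\alpha_iG(\xi_j,p_i)\to-\infty$ as $\xi_j\to p_i$ when $\alpha_i>0$, arbitrarily low sublevels contain configurations escaping to $\partial\mathcal M^+$. Superlevel sets fare no better, because $\Psi\to+\infty$ on the diagonal $\Delta$ and at punctures with $\alpha_i<0$. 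So $\Psi$ has competing $\pm\infty$ behaviours on different pieces of $\partial\mathcal M^+$ and no compactness is available ``for free''; this is precisely the difficulty the paper is organized around. The paper's device is to introduce a \emph{second} function $\Phi$, identical to $\Psi$ except that the interaction term $\sum_{j\neq k}G(\xi_j,\xi_k)$ enters with a minus sign, so that $\Phi\to-\infty$ on \emph{all} of $\partial\mathcal M^+$ and the domain $\mathcal D=\{\Phi>-M\}$ is compactly contained in $\mathcal M^+$. The max--min is then run over maps into $\mathcal D$, and the decisive step --- which your sketch does not address --- is property (P3): showing that at the max--min level $\nabla\Psi$ has a nonzero component tangent to $\partial\mathcal D$. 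This is done by contradiction via a Lagrange-multiplier blow-up analysis ($\beta_1^n\nabla\Psi+\beta_2^n\nabla\Phi=0$), classifying how points can cluster at the $p_i$'s or at $\partial\Sigma^+$. It is exactly there that \eqref{acca5} is used: if the multipliers degenerate one derives the quantization $\#Y_1-1=\alpha_i$ for some cluster $Y_1\subset\{1,\dots,N\}$ collapsing onto $p_i$, which \eqref{acca5} forbids. Your intuition about the role of \eqref{acca5} (ruling out a marginal balance between the $\log d_g(\cdot,p_i)$ singularity and the mutual interactions of a cluster) is in the right direction, but you defer the entire analysis, and the surrounding scaffold you propose for it cannot support it.

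The topological part of your sketch also diverges from what is needed. The paper does not place one point on a non-contractible loop and park the others at maxima; it places \emph{all} $N$ points on the same non-contractible curve $\sigma$, uses a retraction $\mathcal P:\Sigma^+\setminus\{p_1,\dots,p_\ell\}\to\sigma$ (radial projection in the genus-zero case, a torus-type projection otherwise), and proves a degree-theoretic intersection lemma: every admissible deformation $\gamma$ admits a configuration whose components retract to $N$ prescribed \emph{distinct} points of $\sigma$. Because the fibers of $\mathcal P$ over distinct points are uniformly separated (this is where non-contractibility is genuinely used), the interaction terms are bounded on that configuration, giving a uniform upper bound on the max--min value; the lower ``boundary'' level is $\min_{\mathcal B_0}\Psi$, where $\mathcal B_0$ consists of configurations on $\sigma$ with two points at mutual distance exactly $M^{-1}$, and this tends to $+\infty$ as $M\to\infty$. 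Note the inequality goes the opposite way from a standard mountain-pass linking: the boundary level is \emph{above} the max--min value. Your proposed separation ``$\sup_\partial\Psi<c<$ barrier'' does not correspond to anything that can be established for this functional.
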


\begin{proposition}\label{prop:ptocrit2}
Let $N\in\N$ and $\alpha_1,\ldots, \alpha_m>-1$. Suppose that  hypotheses (H1), (H2), (H3), (H4)  hold and, in addition, \beq\label{acca55}\alpha_i\neq 0,1,\ldots, N-1\quad \forall i=1, \ldots,\ell
,\eeq
\begin{equation}\label{ineq}
N\leq \ell +\sum_{i=1}^\ell [\alpha_i]^-.
\end{equation} 
Then $\Psi$ has a stable critical point  $ \bbm[\xi]^*\in {\cal M}^+$.
\end{proposition}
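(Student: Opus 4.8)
The plan is to construct, inside $\mathcal M^+$, a pair of subsets realizing a nontrivial linking/min-max configuration for $\Psi_{\underline\alpha}$, and then to invoke a standard deformation argument to produce a stable critical value. The natural idea, following the approach that already underlies Proposition~\ref{prop:ptocrit1}, is to exploit the logarithmic divergence $\Psi(\bbm[\xi])\to -\infty$ as $\bbm[\xi]$ approaches either the diagonal $\Delta$ or the punctures $p_1,\ldots,p_\ell$ (here the hypothesis $\alpha_i\neq 0,1,\ldots,N-1$ is exactly what guarantees the right sign of the divergence near $p_i$, since $\tilde K$ vanishes or blows up there at a rate dictated by $\alpha_i$, cf.\ \eqref{asym}), while $\Psi$ stays bounded above on compact pieces of $(\Sigma^+\setminus\{p_1,\ldots,p_\ell\})^N$. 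Thus the low sublevels $\Psi^{-L}=\{\Psi\le -L\}$ are, for $L$ large, a neighborhood of the ``boundary at infinity'' of $\mathcal M^+$, and one wants to show this set is \emph{not} contractible in $\mathcal M^+$; if $\Psi$ had no critical point in the intermediate range one could deform $\mathcal M^+$ onto $\Psi^{-L}$, a contradiction.

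First I would set up the topological input coming from condition \eqref{ineq}: $N\le \ell+\sum_{i=1}^\ell[\alpha_i]^-$. The point of this arithmetic inequality is that near each puncture $p_i$ (for $i\le\ell$) the function $\tilde K\sim d_g(x,p_i)^{2\alpha_i}e^{\gamma_i}$ behaves, for the purposes of $\Psi$, like a local maximum/saddle whose ``capacity'' to absorb concentration points is $\lceil 1+\alpha_i\rceil-1=[\alpha_i]^-+1$ of them with the correct sign of divergence; summing over $i=1,\ldots,\ell$ gives room for $\ell+\sum[\alpha_i]^-\ge N$ points. Concretely I would, for each puncture $p_i$ with $i\le\ell$, fix small disjoint geodesic disks $B_i$ around $p_i$ contained in $\Sigma^+$, and on each such punctured disk identify a family of configurations of $n_i\le [\alpha_i]^-+1$ points for which $\Psi$ is controlled, the local model being that of the regular Liouville problem on a disk with a single conical singularity; there the relevant sublevel sets are known (from \cite{espofigue}-type or Euclidean analogues such as \cite{dap},\cite{DKM}) to carry a nontrivial topology, detected e.g.\ by a join of spheres $S^{2n_i-1}$. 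Distributing $N=\sum_i n_i$ points among the $\ell$ punctures, with the constraint $n_i\le[\alpha_i]^-+1$ made feasible precisely by \eqref{ineq}, yields a map from a nontrivial cycle (a join/product of odd-dimensional spheres) into a low sublevel of $\Psi$, together with a ``filling'' on which $\Psi$ is bounded above — this is the min-max pair.

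Then I would run the min-max: define $c=\inf_{B\in\mathcal B}\sup_{\bbm[\xi]\in B}\Psi(\bbm[\xi])$ over the class $\mathcal B$ of images of the filling having the prescribed boundary behavior, show $-\infty<\sup\Psi|_{\text{boundary cycle}}<c<+\infty$ using the two-sided bounds on $\Psi$ above, and conclude $c$ is a critical value. Stability in the sense of Definition~\ref{defli} then follows because the whole construction is robust under $C^1$-small perturbations of $\Psi$: the divergence at $\Delta$ and at the $p_i$'s is untouched, the sublevel inclusion persists, and the min-max value of a nearby $F$ is still attained at an interior critical point; alternatively one phrases it as a nontrivial relative homology class in $(\mathcal M^+,\Psi^{-L})$ surviving perturbation. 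The main obstacle I anticipate is the second step — making the local analysis near each conical point $p_i$ precise enough to pin down the homotopy type of the relevant sublevel sets and to glue the $\ell$ local pieces into a single global cycle in $\mathcal M^+$ while keeping all $N$ points distinct and away from $\partial\Sigma^+$ (this is where (H3), (H4) and the disjointness of the $B_i$ enter); in particular one must check that the arithmetic bound \eqref{ineq} is exactly the combinatorial condition for such a distribution of concentration points to exist, and that the resulting cycle is genuinely nontrivial in $\mathcal M^+$ rather than killed by the ambient topology of $\Sigma^+$.
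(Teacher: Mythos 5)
Your overall strategy (a finite--dimensional min--max exploiting the fact that each puncture $p_i$, $i\le \ell$, can ``hold'' up to $1+[\alpha_i]^-$ concentration points, with \eqref{ineq} guaranteeing that all $N$ points can be distributed) is in the same family as the paper's argument, but the proposal contains a sign error that breaks the setup and omits the two steps that carry the actual difficulty. First, the sign: from \eqref{psi0} the interaction term enters $\Psi$ as $+\sum_{j\neq k}G(\xi_j,\xi_k)$, so $\Psi\to+\infty$, not $-\infty$, as points collide on $\Delta$; likewise $\Psi\to+\infty$ as $\xi_j\to p_i$ when $\alpha_i<0$. Hence the low sublevels $\{\Psi\le -L\}$ are \emph{not} a neighborhood of the boundary at infinity of ${\cal M}^+$, and your inf--sup over ``fillings'' attached to a non-contractible cycle in a low sublevel does not get off the ground. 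The paper runs the scheme in the opposite direction: a sup--inf (max--min) over deformations $\gamma$ of a compact set ${\cal B}$ (a product of small circles $S_\delta(p_i)$ carrying $N_i\le 1+[\alpha_i]^-$ points each) relative to ${\cal B}_0$, where ${\cal B}_0$ is the set of configurations in ${\cal B}$ with two points at the minimal allowed distance, on which $\Psi\to+\infty$ (Proposition \ref{butta}). The linking is then established not by identifying the homotopy type of sublevels (your ``join of spheres'' step, which you leave unproved and which is not what the paper does) but by an explicit degree-theoretic intersection lemma (Lemma \ref{inters}) for the radial retractions onto the circles $S_\delta(p_i)$; the uniform upper bound on the max--min value then follows from the angular separation of the fibers and precisely the inequality $\alpha_i>N_i-1$ coming from \eqref{crik}.

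Second, and more importantly, your proposal does not address property (P3): excluding critical points on the boundary of the truncated domain ${\cal D}$, i.e.\ ruling out that the deformation argument produces configurations escaping to $\partial\Sigma^+$, to the punctures, or to the diagonal. This occupies all of Section \ref{section:P3} and is where the hypotheses (H3) and $\alpha_i\neq 0,1,\ldots,N-1$ are actually used (the non-integrality of $\alpha_i$ enters in \eqref{bell} to forbid the resonance $\#Y_1-1=\alpha_i$ in the Lagrange-multiplier expansion, not, as you assert, to fix ``the sign of the divergence'' of $\Psi$ near $p_i$ --- that sign is governed by $\alpha_i$ versus $N_i-1$, i.e.\ by \eqref{ineq}). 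Without this compactness step the deformation/stability argument you sketch is incomplete: a $C^1$-small perturbation could push the putative critical point onto $\partial{\cal D}$ or out of ${\cal M}^+$. So the proposal captures the correct combinatorial role of \eqref{ineq} but misidentifies the role of \eqref{acca55}, reverses the orientation of the min--max, and leaves out both the intersection lemma and the boundary analysis that make the argument work.
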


\

\noindent Propositions \ref{prop:ptocrit0}-\ref{prop:ptocrit1}-\ref{prop:ptocrit2} are at the core of this work, indeed by combining them with Proposition \ref{prop:espofigue} we get all our existence results and in particular Theorems \ref{thm:intro N+}, \ref{thm:intro noncontractible} and \ref{thm:intro contractible}, as we will see in the next section.

\

\

\section{Existence results for the general Liouville problem \ref{liouvhat}}\label{section:liouvhat}

\noindent In this section we provide the three main  existence results for the Liouville equation \ref{liouvhat}, from which we will deduce 
Theorems \ref{thm:intro N+}, \ref{thm:intro noncontractible} and \ref{thm:intro contractible} by choosing \[\rho=\rho_{geo}=4\pi\chi(\Sigma,\underline{\alpha}).\]
\noindent 

Let us begin with the first result which is a combination of Proposition \ref{prop:espofigue} and  Proposition \ref{prop:ptocrit0}.
\begin{theorem}\label{thm:0} Let $N\in\N$. Assume that $\Sigma^+$ consists of $N^+$ connected components with $N\leq N^+$ and   hypotheses (H1), (H2) hold. Then for any $-1<\alpha_{\star}<\alpha^{\star}$  there exists $\delta=\delta(\alpha^{\star},\alpha_{\star})>0$  such that, if $\alpha_1,\ldots, \alpha_m$ verify \eqref{acca0} and 
\begin{itemize}
\item[{\rm{(i)}}] $\alpha_{\star}\leq\alpha_i\leq\alpha^{\star}$ for any $i=1,\ldots,m$,
\item[{\rm{(ii)}}]  $\mathcal A>0$ ($<0$ resp.) in the set local maxima   of $\Psi$,
\end{itemize}
then for all $\rho\in(8\pi N,8\pi N+\delta)$ ($\rho\in(8\pi N-\delta,8\pi N)$ resp.) there is a solution $v_\rho$ of \ref{liouvhat}. 
Moreover there exists $ \bbm[\xi]^*\in {\cal M}^+$ such that \eqref{conv} holds.
\end{theorem}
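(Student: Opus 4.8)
The plan is to derive Theorem~\ref{thm:0} as a straightforward combination of the two building blocks already at our disposal: the finite-dimensional reduction of Proposition~\ref{prop:espofigue} and the existence of a local maximum for $\Psi$ provided by Proposition~\ref{prop:ptocrit0}. First I would fix $N\in\N$ and $-1<\alpha_\star<\alpha^\star$, and let $\delta=\delta(\alpha^\star,\alpha_\star)>0$ be the number furnished by Proposition~\ref{prop:espofigue}. The hypotheses (H1), (H2) together with the assumption that $\Sigma^+$ has $N^+\geq N$ connected components and that $\alpha_1,\ldots,\alpha_m$ satisfy \eqref{acca0} are exactly the hypotheses of Proposition~\ref{prop:ptocrit0}; hence $\Psi=\Psi_{\underline\alpha}$ admits a local maximum point $\bbm[\xi]^\ast=(\xi_1^\ast,\ldots,\xi_N^\ast)\in\mathcal M^+$, with the $\xi_j^\ast$ lying in distinct connected components of $\Sigma^+$.

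Next I would check the three assumptions $a)$, $b)$, $c)$ of Proposition~\ref{prop:espofigue} at this point $\bbm[\xi]^\ast$. Assumption $a)$ is precisely hypothesis (i), $\alpha_\star\leq\alpha_i\leq\alpha^\star$. Assumption $c)$ holds because a local maximum is, by the last sentence of Definition~\ref{defli}, automatically a stable critical point of $\Psi_{\underline\alpha}$. Assumption $b)$, namely $\mathcal A(\bbm[\xi]^\ast)>0$ (resp.\ $<0$), is granted by hypothesis (ii), since $\bbm[\xi]^\ast$ belongs to the set of local maxima of $\Psi$ on which, by assumption, $\mathcal A$ has the prescribed sign. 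With all three conditions verified, Proposition~\ref{prop:espofigue} yields, for every $\rho\in(8\pi N,8\pi N+\delta)$ (resp.\ $\rho\in(8\pi N-\delta,8\pi N)$), a solution $v_\rho$ of \ref{liouvhat}, together with the concentration property \eqref{conv} at the points $\xi_1^\ast,\ldots,\xi_N^\ast$, which is exactly the asserted \eqref{conv} with this $\bbm[\xi]^\ast\in\mathcal M^+$.

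Since the argument is essentially a matching of hypotheses, there is no serious obstacle; the only point requiring a word of care is that the $\delta$ in the statement must be the one coming from Proposition~\ref{prop:espofigue}, which depends only on $\alpha^\star,\alpha_\star$ (and on $N$, $m$) and not on the particular admissible choice of the $\alpha_i$'s nor on $\bbm[\xi]^\ast$ — this uniformity is exactly the slightly strengthened formulation of \cite{espofigue} recorded after Proposition~\ref{prop:espofigue}, so it is available to us. One should also note that Proposition~\ref{prop:ptocrit0} produces the local maximum for \emph{any} $\alpha_1,\ldots,\alpha_m>-1$ satisfying \eqref{acca0}, in particular for those in $[\alpha_\star,\alpha^\star]$, so the set of local maxima over which hypothesis (ii) is imposed is indeed non-empty and the conclusion is not vacuous.
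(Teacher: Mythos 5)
Your proposal is correct and is exactly the argument the paper intends: Theorem \ref{thm:0} is obtained by applying Proposition \ref{prop:ptocrit0} to produce a local maximum of $\Psi_{\underline\alpha}$ (which is stable in the sense of Definition \ref{defli}), and then feeding this point into Proposition \ref{prop:espofigue}, whose hypotheses $a)$, $b)$, $c)$ correspond precisely to (i), (ii), and the stability of the local maximum. Your remark about the uniformity of $\delta$ in $\alpha_\star,\alpha^\star$ is also the right point of care and matches the paper's strengthened formulation of \cite{espofigue}.
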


\noindent Taking into account of  Remark \ref{rem:A<0}, Theorem \ref{thm:0} can be reformulated as follows.

\begin{corollary}\label{cor:0}
Let $N\in\N$. Assume that $\Sigma^+$ consists of $N^+$ connected components with $N\leq N^+$,   hypotheses (H1), (H2) hold and  $$\sup_{\xi\in\Sigma^+}(\Delta_g\log K(\xi))\leq -\beta$$ for some $\beta>0$.
Then for any $-1<\alpha_{\star}<\alpha^{\star}$  there exists $\delta=\delta(\alpha^{\star},\alpha_{\star})>0$ such that, if $\alpha_1, \ldots,\alpha_m$ verify \eqref{acca0}  and:
\begin{itemize}
\item[{\rm{(i)}}] $\alpha_{\star}\leq\alpha_i\leq\alpha^{\star}$ for any $i=1,\ldots,m$,
\item[{\rm{(ii)}}]  $\chi(\Sigma,\underline{\alpha})>2 N-\frac{\beta|\Sigma|}{4\pi},$
\end{itemize}
then for all $\rho\in(8\pi N-\delta,8\pi N)$ there is a solution $v_\rho$ of \ref{liouvhat}. 
Moreover there exists $ \bbm[\xi]^*\in {\cal M}^+$ such that \eqref{conv} holds.

\end{corollary}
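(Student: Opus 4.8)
The plan is to deduce Corollary \ref{cor:0} directly from Theorem \ref{thm:0} by checking that the hypothesis on $\Delta_g\log K$ together with condition (ii) of the corollary forces $\mathcal A<0$ on all of $\mathcal M^+$, in particular on the set of local maxima of $\Psi$, so that condition (ii) of Theorem \ref{thm:0} holds with the ``$<0$'' alternative. This is precisely the content of Remark \ref{rem:A<0}, so the argument is essentially a matter of assembling the pieces.

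Concretely, I would first recall from \eqref{matA} that for every $\bbm[\xi]=(\xi_1,\dots,\xi_N)\in\mathcal M^+$ one has
\[
\mathcal A(\bbm[\xi])=4\pi\sum_{j=1}^N \tilde K(\xi_j)e^{8\pi h(\xi_j,\xi_j)+8\pi\sum_{k\neq j}G(\xi_j,\xi_k)}\Big[\Delta_g \log K(\xi_j)+\frac{8\pi N- 4\pi\chi(\Sigma,\underline{\alpha})}{|\Sigma|}\Big].
\]
Since each $\xi_j\in\Sigma^+\setminus\{p_1,\dots,p_\ell\}$, we have $\tilde K(\xi_j)>0$ (recall $\tilde K=K e^{f_g-4\pi\sum\alpha_iG(\cdot,p_i)}$ and $K>0$ on $\Sigma^+$, while the exponential factor is strictly positive away from the $p_i$'s thanks to $\alpha_i>0$ for $i\le\ell$), and the exponential weight $e^{8\pi h(\xi_j,\xi_j)+8\pi\sum_{k\neq j}G(\xi_j,\xi_k)}$ is finite and strictly positive on $\mathcal M^+$. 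Hence the sign of each bracketed term controls the sign of $\mathcal A$. Now the hypothesis $\sup_{\Sigma^+}\Delta_g\log K\le-\beta$ gives $\Delta_g\log K(\xi_j)\le-\beta$, while condition (ii), $\chi(\Sigma,\underline{\alpha})>2N-\frac{\beta|\Sigma|}{4\pi}$, rearranges to $\frac{8\pi N-4\pi\chi(\Sigma,\underline{\alpha})}{|\Sigma|}<\beta$; adding these yields that each bracket is $<0$, whence $\mathcal A(\bbm[\xi])<0$ for every $\bbm[\xi]\in\mathcal M^+$.

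Having established $\mathcal A<0$ on $\mathcal M^+$, in particular on the (nonempty, by Proposition \ref{prop:ptocrit0}) set of local maxima of $\Psi$, I would invoke Theorem \ref{thm:0} with the ``$<0$'' alternative: for the given $-1<\alpha_\star<\alpha^\star$ it furnishes $\delta=\delta(\alpha^\star,\alpha_\star)>0$ such that, whenever $\alpha_1,\dots,\alpha_m$ satisfy \eqref{acca0} and (i), for every $\rho\in(8\pi N-\delta,8\pi N)$ there is a solution $v_\rho$ of \ref{liouvhat}, together with the concentration property \eqref{conv} for a suitable $\bbm[\xi]^*\in\mathcal M^+$. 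This is exactly the assertion of the corollary, so no further work is needed.

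I do not anticipate a genuine obstacle here: the statement is a corollary in the literal sense, and the only point requiring a line of care is verifying the strict positivity of $\tilde K(\xi_j)$ and of the exponential weights on $\mathcal M^+$ — that is, that the points $\xi_j$ stay away from $\partial\Sigma^+$ (where $K$ vanishes) and from the singularities $p_i$ and from each other, which is built into the definition \eqref{emme} of $\mathcal M^+$ and the sign condition \eqref{acca0}. Everything else is the elementary rearrangement of the inequality in condition (ii) noted above.
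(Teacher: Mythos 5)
Your proposal is correct and follows exactly the paper's route: the paper deduces Corollary \ref{cor:0} from Theorem \ref{thm:0} precisely via Remark \ref{rem:A<0}, i.e.\ by observing that $\sup_{\Sigma^+}\Delta_g\log K\le-\beta$ together with (ii) forces each bracket in \eqref{matA} to be negative, hence $\mathcal A<0$ on all of $\mathcal M^+$. The only cosmetic point is that the positivity of the exponential factor in $\tilde K$ needs no appeal to $\alpha_i>0$ (an exponential of a finite real number is always positive; one only needs $\xi_j\neq p_i$, which is built into $\mathcal M^+$), but this does not affect the argument.
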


\

\noindent Similarly combining Proposition \ref{prop:espofigue} and  Proposition \ref{prop:ptocrit1} we get the following.

\begin{theorem}\label{thm:1} Let $N\in\N$. Suppose that $\Sigma^+$ has a non contractible connected component and  hypotheses (H1), (H2), (H3), (H4) hold. Then for any $-1<\alpha_{\star}<\alpha^{\star}$  there exists $\delta=\delta(\alpha^{\star},\alpha_{\star})>0$ such that, if $\alpha_1,\ldots, \alpha_m$ verify \eqref{acca5} and 
\begin{itemize}
\item[{\rm{(i)}}] $\alpha_{\star}\leq\alpha_i\leq\alpha^{\star}$ for any $i=1,\ldots,m$,
\item[{\rm{(ii)}}]  $\mathcal A>0$ ($<0$ resp.) in the set of critical points  of $\Psi$,
\end{itemize}
then for all $\rho\in(8\pi N,8\pi N+\delta)$ ($\rho\in(8\pi N-\delta,8\pi N)$ resp.) there is a solution $v_\rho$ of \ref{liouvhat}. 
Moreover there exists $ \bbm[\xi]^*\in {\cal M}^+$ such that \eqref{conv} holds.

\end{theorem}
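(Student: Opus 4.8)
The statement is the natural ``assembly'' theorem: it follows by combining the finite dimensional reduction of Proposition \ref{prop:espofigue} with the existence of a stable critical point furnished by Proposition \ref{prop:ptocrit1}. The plan is therefore to check, one by one, the three hypotheses $a)$, $b)$, $c)$ of Proposition \ref{prop:espofigue}.

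Given $-1<\alpha_\star<\alpha^\star$, I would take $\delta=\delta(\alpha^\star,\alpha_\star)>0$ to be exactly the constant provided by Proposition \ref{prop:espofigue}, and then fix $\alpha_1,\dots,\alpha_m$ satisfying \eqref{acca5}, (i) and (ii). Hypothesis $a)$ of Proposition \ref{prop:espofigue} is literally condition (i). For hypothesis $c)$: since $\Sigma^+$ has a non-contractible connected component, (H1)--(H4) hold, and the orders satisfy $\alpha_i\neq 0,1,\dots,N-1$ for $i=1,\dots,\ell$ by \eqref{acca5} (note also $\alpha_i>\alpha_\star>-1$), Proposition \ref{prop:ptocrit1} applies and yields a stable critical point $\bbm[\xi]^\ast\in\mathcal M^+$ of $\Psi=\Psi_{\underline\alpha}$. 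Finally, for hypothesis $b)$: this same $\bbm[\xi]^\ast$ is in particular a critical point of $\Psi$, so assumption (ii) forces $\mathcal A(\bbm[\xi]^\ast)>0$ (resp.\ $\mathcal A(\bbm[\xi]^\ast)<0$), which is precisely what $b)$ demands at the point $\bbm[\xi]^\ast$.

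With $a)$, $b)$, $c)$ verified at $\bbm[\xi]^\ast$, Proposition \ref{prop:espofigue} gives, for every $\rho\in(8\pi N,8\pi N+\delta)$ (resp.\ every $\rho\in(8\pi N-\delta,8\pi N)$), a solution $v_\rho$ of \ref{liouvhat}, together with the concentration \eqref{conv} at the same $\bbm[\xi]^\ast$; this is exactly the conclusion of the theorem. The only non-routine ingredient is Proposition \ref{prop:ptocrit1}: all the genuine work — the min-max construction exploiting the non-contractibility of a component of $\Sigma^+$ and the arithmetic condition \eqref{acca5} on the singular orders — is contained there and is deferred to Section \ref{section:minmax}. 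The present theorem is merely the combination step, so it carries no obstacle of its own beyond carefully matching the hypotheses.
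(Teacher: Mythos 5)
Your proposal is correct and matches the paper exactly: the paper states Theorem \ref{thm:1} as an immediate combination of Proposition \ref{prop:espofigue} (the finite dimensional reduction) with Proposition \ref{prop:ptocrit1} (the stable critical point), which is precisely the hypothesis-matching you carry out. No gaps.
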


\noindent Proceeding similarly as above, using Remark \ref{rem:A<0} we also have:

\begin{corollary}\label{cor:1}  Let $N\in\N$. Suppose that $\Sigma^+$ has a non contractible connected component,  hypotheses (H1), (H2), (H3), (H4) hold and 
$$\sup_{\xi\in\Sigma^+}(\Delta_g\log K(\xi))\leq -\beta$$ for some $\beta>0$.
Then for any $-1<\alpha_{\star}<\alpha^{\star}$  there exists $\delta=\delta(\alpha^{\star},\alpha_{\star})>0$ such that, if $\alpha_1,\ldots, \alpha_m$ verify \eqref{acca5} and: 
\begin{itemize}
\item[{\rm{(i)}}] $\alpha_{\star}\leq\alpha_i\leq\alpha^{\star}$ for any $i=1,\ldots,m$,
\item[{\rm{(ii)}}]  $\chi(\Sigma,\underline{\alpha})>2 N-\frac{\beta|\Sigma|}{4\pi},$
\end{itemize}
then for all $\rho\in(8\pi N-\delta,8\pi N)$ there is a solution $v_\rho$ of \ref{liouvhat}.
Moreover there exists $ \bbm[\xi]^*\in {\cal M}^+$ such that \eqref{conv} holds.
\end{corollary}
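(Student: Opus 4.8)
The plan is to obtain Corollary \ref{cor:1} as an immediate consequence of Theorem \ref{thm:1}, once one observes that the two standing hypotheses on $K$ and on $\chi(\Sigma,\underline\alpha)$ are precisely those appearing in \eqref{condiz} of Remark \ref{rem:A<0}, and therefore force the sign condition $\mathcal A<0$ to hold \emph{everywhere} on $\mathcal M^+$. In particular the alternative ``$\mathcal A<0$ in the set of critical points of $\Psi$'' in hypothesis (ii) of Theorem \ref{thm:1} is automatically satisfied, so there is nothing left to check beyond invoking that theorem.

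First I would spell out why $\mathcal A<0$ on all of $\mathcal M^+$. Using the second expression in \eqref{matA}, each summand of $\mathcal A(\bbm[\xi])$ is the product of the strictly positive weight $4\pi\tilde K(\xi_j)e^{8\pi h(\xi_j,\xi_j)+8\pi\sum_{k\neq j}G(\xi_j,\xi_k)}$ — positive because $\xi_j\in\Sigma^+\setminus\{p_1,\dots,p_\ell\}$, so $\tilde K(\xi_j)>0$ by \eqref{asym} — times the bracket $\Delta_g\log K(\xi_j)+\frac{8\pi N-4\pi\chi(\Sigma,\underline\alpha)}{|\Sigma|}$. By the assumption $\sup_{\xi\in\Sigma^+}(\Delta_g\log K(\xi))\le-\beta$ and by (ii), which gives $8\pi N-4\pi\chi(\Sigma,\underline\alpha)<\beta|\Sigma|$, this bracket is bounded above by $-\beta+\frac{\beta|\Sigma|}{|\Sigma|}=0$ and hence is strictly negative. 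Summing, $\mathcal A(\bbm[\xi])<0$ for every $\bbm[\xi]\in\mathcal M^+$; a fortiori $\mathcal A<0$ at every critical point of $\Psi$.

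It then remains to apply Theorem \ref{thm:1}. Given $-1<\alpha_\star<\alpha^\star$, take $\delta=\delta(\alpha^\star,\alpha_\star)>0$ to be the constant provided by that theorem. If $\alpha_1,\dots,\alpha_m$ satisfy \eqref{acca5} together with (i) and (ii), all hypotheses of Theorem \ref{thm:1} hold, and by the previous step we are in its ``$\mathcal A<0$'' branch; hence for every $\rho\in(8\pi N-\delta,8\pi N)$ there is a solution $v_\rho$ of \ref{liouvhat}, and the concentration \eqref{conv} holds with $\bbm[\xi]^*$ the stable critical point of $\Psi$ furnished by Proposition \ref{prop:ptocrit1}.

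I do not anticipate any real obstacle: the corollary is pure bookkeeping on top of Theorem \ref{thm:1} and Remark \ref{rem:A<0}. The only point deserving a word of care is that $\delta$ can be chosen uniformly over the admissible range $\alpha_\star\le\alpha_i\le\alpha^\star$, but this uniformity is already built into Proposition \ref{prop:espofigue} and hence into Theorem \ref{thm:1}, so no additional argument is needed.
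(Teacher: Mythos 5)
Your argument is exactly the paper's: Corollary \ref{cor:1} is deduced by checking, via the second expression in \eqref{matA} together with \eqref{condiz} of Remark \ref{rem:A<0}, that the hypotheses force $\mathcal A<0$ on all of $\mathcal M^+$, and then invoking the ``$\mathcal A<0$'' branch of Theorem \ref{thm:1}. The verification of the sign of the bracket and the remark on the uniformity of $\delta$ are both correct, so nothing needs to be added.
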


\

\noindent Last combining Proposition \ref{prop:espofigue} and Proposition \ref{prop:ptocrit2} we obtain:

\begin{theorem}\label{thm:2}  Let $N\in\N$. Suppose that  hypotheses (H1), (H2), (H3), (H4)  hold.
Then for any $-1<\alpha_{\star}<\alpha^{\star}$  there exists $\delta=\delta(\alpha^{\star},\alpha_{\star})>0$ such that, if $\alpha_1,\ldots, \alpha_m$ verify \eqref{acca55}, \eqref{ineq} and 
\begin{itemize}
\item[{\rm{(i)}}] $\alpha_{\star}\leq\alpha_i\leq\alpha^{\star}$ for any $i=1,\ldots,m$,
\item[{\rm{(ii)}}]  $\mathcal A>0$ ($<0$ resp.) in the set of critical points  of $\Psi$,
\end{itemize}
then for all $\rho\in(8\pi N,8\pi N+\delta)$ ($\rho\in(8\pi N-\delta,8\pi N)$ resp.) there is a solution $v_\rho$ of \ref{liouvhat}. 
Moreover there exists $ \bbm[\xi]^*\in {\cal M}^+$ such that \eqref{conv} holds.
\end{theorem}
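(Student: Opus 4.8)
The plan is to derive Theorem~\ref{thm:2} directly by combining the finite-dimensional reduction recalled in Proposition~\ref{prop:espofigue} with the existence of a stable critical point of $\Psi$ guaranteed by Proposition~\ref{prop:ptocrit2}. Once those two statements are in hand, the argument is essentially a matter of checking that their hypotheses are met; no further analysis is needed at this stage.

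First I would fix $-1<\alpha_\star<\alpha^\star$ and let $\delta=\delta(\alpha^\star,\alpha_\star)>0$ be the constant produced by Proposition~\ref{prop:espofigue}. Then, under hypotheses (H1)--(H4) together with \eqref{acca55} and \eqref{ineq}, I would apply Proposition~\ref{prop:ptocrit2}: its hypotheses are exactly those conditions, so it furnishes a stable critical point $\bbm[\xi]^\ast\in{\cal M}^+$ of $\Psi=\Psi_{\underline\alpha}$, which is precisely requirement $c)$ of Proposition~\ref{prop:espofigue}. Requirement $a)$ is nothing but assumption (i), and requirement $b)$ follows from assumption (ii), since $\bbm[\xi]^\ast$ belongs to the set of critical points of $\Psi$ and therefore $\mathcal A(\bbm[\xi]^\ast)>0$ (respectively $<0$). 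With $a)$, $b)$, $c)$ verified, Proposition~\ref{prop:espofigue} yields, for every $\rho\in(8\pi N,8\pi N+\delta)$ (respectively $\rho\in(8\pi N-\delta,8\pi N)$), a solution $v_\rho$ of \ref{liouvhat} together with the concentration \eqref{conv} centered at the same $\bbm[\xi]^\ast\in{\cal M}^+$. This is exactly the conclusion of Theorem~\ref{thm:2}, so the proof would be complete.

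Consequently the real work does not lie in this deduction but in Proposition~\ref{prop:ptocrit2}, whose proof (deferred to Section~\ref{section:minmax}) I expect to be the main obstacle. There one has to run a min-max scheme on the open, non-compact set ${\cal M}^+=(\Sigma^+\setminus\{p_1,\dots,p_\ell\})^N\setminus\Delta$ and show that the resulting min-max level is strictly separated from the values taken by $\Psi$ as a configuration degenerates --- when two points $\xi_j$ collide, or when some $\xi_j$ approaches $\partial\Sigma^+$ or one of the singular points $p_1,\dots,p_\ell$ --- so that a critical point (necessarily stable in the sense of Definition~\ref{defli}) is trapped in the interior. The arithmetic restrictions \eqref{acca55} and \eqref{ineq} are precisely what control the asymptotics of $\Psi$ near the $p_i$'s, through the weights in \eqref{aaa}--\eqref{asym}, while (H3) governs the behaviour near $\partial\Sigma^+$; this is where the delicacy of the construction is concentrated.
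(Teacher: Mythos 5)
Your proposal is correct and coincides with the paper's own argument: Theorem \ref{thm:2} is obtained exactly by feeding the stable critical point from Proposition \ref{prop:ptocrit2} into conditions $a)$, $b)$, $c)$ of Proposition \ref{prop:espofigue}, with $\delta$ taken from the latter. You also correctly identify that the substantive work is the min-max construction behind Proposition \ref{prop:ptocrit2}, which the paper likewise defers to Section \ref{section:minmax}.
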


\noindent Once more, using Remark \ref{rem:A<0} we also have:

\begin{corollary}\label{cor:2}
Let $N\in\N$. Suppose that  hypotheses (H1), (H2), (H3), (H4)  hold 
 and $$\sup_{\xi\in\Sigma^+}(\Delta_g\log K(\xi))\leq -\beta$$ for some $\beta>0$.
 Then for any $-1<\alpha_{\star}<\alpha^{\star}$  there exists $\delta=\delta(\alpha^{\star},\alpha_{\star})>0$ such that, if $\alpha_1,\ldots, \alpha_m$ verify \eqref{acca55}, \eqref{ineq} and \begin{itemize}
\item[{\rm{(i)}}] $\alpha_{\star}\leq\alpha_i\leq\alpha^{\star}$ for any $i=1,\ldots,m$,
\item[{\rm{(ii)}}]  $\chi(\Sigma,\underline{\alpha})>2 N-\frac{\beta|\Sigma|}{4\pi},$
\end{itemize}
then for all $\rho\in(8\pi N,8\pi N+\delta)$ ($\rho\in(8\pi N-\delta,8\pi N)$ resp.) there is a solution $v_\rho$ of \ref{liouvhat}. 
Moreover there exists $ \bbm[\xi]^*\in {\cal M}^+$ such that \eqref{conv} holds.

\end{corollary}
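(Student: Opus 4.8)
The plan is to derive Corollary~\ref{cor:2} directly from Theorem~\ref{thm:2}; the only thing that genuinely requires checking is that the two quantitative hypotheses of the corollary imply condition~(ii) of that theorem.

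First I would fix $-1<\alpha_\star<\alpha^\star$ and take $\delta=\delta(\alpha^\star,\alpha_\star)>0$ to be exactly the constant furnished by Theorem~\ref{thm:2} (equivalently, by Proposition~\ref{prop:espofigue}, since those two constants coincide). Assume $\alpha_1,\dots,\alpha_m$ satisfy \eqref{acca55}, \eqref{ineq}, $\alpha_\star\le\alpha_i\le\alpha^\star$ for all $i$, and $\chi(\Sigma,\underline{\alpha})>2N-\tfrac{\beta|\Sigma|}{4\pi}$. Together with the standing assumption $\sup_{\xi\in\Sigma^+}(\Delta_g\log K(\xi))\le-\beta$, these are precisely the two conditions appearing in \eqref{condiz}.

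Next I would invoke Remark~\ref{rem:A<0}: the pair \eqref{condiz} implies \eqref{condizioneKperA<o}, and hence $\mathcal A(\bbm[\xi])<0$ for every $\bbm[\xi]\in\mathcal M^+$. In particular $\mathcal A<0$ on the set of critical points of $\Psi_{\underline{\alpha}}$, so hypothesis~(ii) of Theorem~\ref{thm:2} holds in its ``resp.'' form (and, as observed in Remark~\ref{rem:A>0}, the sign $\mathcal A>0$ is in fact impossible once $\sup_{\Sigma^+}\Delta_g\log K<0$, so only the alternative $\rho\in(8\pi N-\delta,8\pi N)$ can occur here). It then remains to apply Theorem~\ref{thm:2}: since (H1)--(H4), \eqref{acca55}, \eqref{ineq}, and conditions~(i)--(ii) of that theorem are all in force, for every $\rho\in(8\pi N-\delta,8\pi N)$ there is a solution $v_\rho$ of \ref{liouvhat}, together with the concentration \eqref{conv} at some $\bbm[\xi]^*\in\mathcal M^+$; and the statement is not vacuous, because Proposition~\ref{prop:ptocrit2}, whose hypotheses \eqref{acca55}--\eqref{ineq} are assumed, guarantees that $\Psi_{\underline{\alpha}}$ does possess a stable critical point.

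There is essentially no analytic obstacle in this argument: all the substance is already contained in Proposition~\ref{prop:espofigue}, Theorem~\ref{thm:2}, Proposition~\ref{prop:ptocrit2} and Remark~\ref{rem:A<0}. The only care needed is the bookkeeping — matching the constant $\delta$ and the $\rho$-interval with those produced in Theorem~\ref{thm:2}, and observing that the conjunction of the two displayed hypotheses of the corollary is \emph{literally} \eqref{condiz}, so that Remark~\ref{rem:A<0} applies verbatim without any further estimate.
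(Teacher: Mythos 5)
Your proposal is correct and coincides with the paper's own (implicit) argument: the paper derives Corollary~\ref{cor:2} from Theorem~\ref{thm:2} precisely by observing, via Remark~\ref{rem:A<0}, that the two displayed hypotheses are exactly \eqref{condiz}, whence $\mathcal A<0$ on all of $\mathcal M^+$ and only the case $\rho\in(8\pi N-\delta,8\pi N)$ is operative. Your additional remark that the $\mathcal A>0$ alternative cannot occur here is also consistent with the paper's Remark~\ref{rem:A>0} and with how the corollary is actually used to prove Theorem~\ref{thm:intro contractible}.
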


\

\noindent Observe that 
Theorem \ref{thm:intro N+}, Theorem \ref{thm:intro noncontractible} and Theorem \ref{thm:intro contractible} follow immediately from Corollary \ref{cor:0}, Corollary \ref{cor:1} Corollary \ref{cor:2}, respectively,  by taking $\alpha^\star=2 N-\chi(\Sigma)+m$ and $\rho=\rho_{geo}$. 

\
Thus in order to achieve the existence results for problem \ref{liouvgeom} such as the ones predicted by Theorem \ref{thm:intro N+}, Theorem \ref{thm:intro noncontractible} and Theorem \ref{thm:intro contractible}, it remains to prove  Propositions \ref{prop:ptocrit1}--\ref{prop:ptocrit2} (the proof of Proposition  \ref{prop:ptocrit0} is immediate). This will be accomplished in the next section.

\

\section{The min-max scheme}\label{section:minmax}
The discussion in the previous section  implies that our problem reduces now to investigate the existence of stable critical points for the reduced energy $\Psi$ in order to prove   Proposition \ref{prop:ptocrit1} and \ref{prop:ptocrit2}.  In this section we will apply a max-min argument to characterize a
topologically nontrivial  critical value of this function $\Psi$ in the set ${\cal M}^+$. Since $\tilde K$ is defined by \eqref{aaa},  $\Psi$ actually becomes 
$$\Psi( \bbm[\xi])={\cal H}(\bbm[\xi])+\frac{1}{4\pi} \sum_{j=1}^N\log  K(\xi_j)-
 \sum_{i=1}^\ell \alpha_i \sum_{j=1}^N G(\xi_j,p_i)
+\sum_{j,k=1\atop j\neq k}^NG(\xi_j,\xi_k).$$
where ${\cal H}$ is a smooth term on $(\Sigma^+)^N$, precisely $${\cal H}(\bbm[\xi]):=\sum_{j=1}^Nh(\xi_j, \xi_j)-
 \sum_{i=\ell+1}^m \alpha_i \sum_{j=1}^N G(\xi_j,p_i)\in {C}^1((\Sigma^+)^N).$$

Let us briefly outline the variational argument we are going to set up, which consists in two parts. 

First we will construct  sets ${\cal B}, \,{\cal B}_0, {\cal D}\subset {\cal M}^+$ satisfying the following two properties:
\begin{enumerate}
\item [(P1)] $\mathcal D$ is open,   ${\cal B}$ and ${\cal B}_0$ are
compact,  ${\cal B}$ is connected and
\begin{equation*}{\cal B}_0\subset {\cal B}\subset \mathcal D\subset {\overline {\mathcal D}}\subset {\cal M}^+;\end{equation*}
\item[(P2)] let us set ${\mathcal F}$ to be the class of all continuos maps $\gamma:{\cal B}\to {\cal D}$ with the property that there exists a continuos homotopy $\Gamma:[0,1]\times {\cal B}\to {\cal D}$ such that:       $$\Gamma(0,\cdot)=id_{\cal B},\quad \Gamma(1,\cdot)=\gamma,\quad \Gamma(t,\bbm[\xi])=\bbm[\xi]\;\;\forall t\in [0,1],\,\forall \bbm[\xi]\in {\cal B}_0;$$  then
\begin{equation}\label{mima}{ \Psi}^*:=\sup_{\gamma\in{\mathcal F}}\min_{\hbox{\scriptsize$\bbm[\xi]$}\in
{\cal B}}\Psi(\gamma(\bbm[\xi]))<\min_{\hbox{\scriptsize$\bbm[\xi]$}\in {\cal B}_0} \Psi(\bbm[\xi]);\end{equation}
\end{enumerate}
Secondly, we need to exclude the possibility that the critical point is placed on the boundary of our domain, and precisely we need that:

\begin{enumerate}
 \item[(P3)] for every $\bbm[\xi]\in\partial\mathcal D$ such that $\Psi(\bbm[\xi])={\Psi}^*$, $\partial \mathcal D$ is smooth at $\bbm[\xi]$ and
there exists a vector $\tau_{\hbox{\scriptsize$\bbm[\xi]$}}$ tangent to $\partial\mathcal
D$ at $\bbm[\xi]$ so that $\tau_{\hbox{\scriptsize$\bbm[\xi]$}}\cdot\nabla \Psi(\bbm[\xi])\neq 0$.

\end{enumerate}

\bigskip

Under these assumptions a critical point $\bbm[\xi]\in
{\mathcal D}$ of $\Psi$ with $\Psi(\bbm[\xi])=\Psi^*$
exists, as a standard deformation argument involving the gradient
flow of $\Psi$ shows.  
 Moreover, since properties (P2)-(P3) continue to hold also for a functional which is ${C}^1$-close to $\Psi$, then  such critical point will  \textit{survive} small ${\cal C}^1$-perturbations and, consequently, will be \textit{stable} in the sense of Definition \ref{defli}.
 
\

Hence, once properties (P1)-(P2)-(P3) are established, for suitable sets ${\cal B}, \,{\cal B}_0$ and ${\cal D}$, Propositions \ref{prop:ptocrit1} and \ref{prop:ptocrit2} would follow. We will prove (P1)-(P2)-(P3) in Sections \ref{section:P1}, \ref{section:P2} and \ref{section:P3} respectively.

\subsection{Definition of ${\cal B}$, ${\cal B}_0$, and proof of (P1)}\label{section:P1}
 To establish property (P1), we define 
  \beq\label{didi}{\mathcal D}=\big\{\bbm[\xi]\in{\cal M}^+\;
\big|\,\Phi(\bbm[\xi])  >-M \big\}\eeq 
where $M>0$ is a sufficiently large number yet to be chosen and 
$$\begin{aligned}\Phi(\bbm[\xi]):&=
{\cal H}(\bbm[\xi])+\frac{1}{4\pi}\sum_{j=1}^N \log K (\xi_j)-\sum_{i=1}^{\ell}\alpha_i\sum_{j=1}^N G(\xi_j,p_i)
-\sum_{j,k=1\atop j\neq k}^NG(\xi_j, \xi_k).\end{aligned}$$
 By using the properties of the functions $K,\, G$ it is easy to check  that $\Phi$ satisfies \begin{equation}\label{coercivi}\Phi(\bbm[\xi])\to -\infty\hbox{ as }\bbm[\xi]\to \partial{\cal M}^+,\end{equation} and this implies that  ${\mathcal
D}$ is compactly contained in ${\cal M}^+$. 
In order to define ${\cal B}$, we fix 
\beq\label{pap}\sigma_1,\dots,\sigma_N\subset\Sigma^+ \setminus \{p_1,\dots,p_\ell\}\eeq $N$  (not necessarily distinct) simple, closed curves in $\Sigma^+$ which do not intersect any of the singular sources $p_i$. Next we fix   $$\bbm[\xi]_0=(\xi_1^0, \ldots, \xi_N^0)\in \sigma_1\times\ldots\times\sigma_N,\qquad \xi_j^0\neq \xi_k^0\;\;\;\forall j\neq k $$ a $N$-tuple of 
$N$ distinct points. The exact choice of curves $\sigma_j$ and points $\xi_j^0$ will be specified later and will depend on the topology of $\Sigma^+$.
We introduce  the  set 
\beq\label{openset}\left\{\bbm[\xi]\in \Sigma^+ \,\Big|\,\xi_j\in \sigma_j \;\;\&\;\;d_g(\xi_j,\xi_k)>M^{-1}   \quad \forall j\neq k\right\}.\eeq
In principle, we do not know
whether \eqref{openset} is connected or not, so we will choose 
 a convenient connected component $W$. Since $\xi_j^0\neq \xi_k^0$ for $j\neq k$, then $\bbm[\xi]_0$ belongs to \eqref{openset} provided that $M$ is sufficiently large. 
 Now we are in conditions of defining ${\cal B}$ and ${\cal B}_0$: 
$$W:=\hbox{ the connected component of \eqref{openset} containing } \bbm[\xi]_0,$$
$${\cal B}:=\overline W,\quad {\cal B}_0=\Big\{\bbm[\xi]\in {\cal B}\,\Big|\, \min_{j\neq k}d_g(\xi_j,\xi_k)=M^{-1}\Big\}.$$
${\cal B}$ is clearly connected and ${\cal B}_0\subset {\cal B}$. Moreover by construction   we get that the $N$-tuple of  points in \eqref{openset} are uniformly distant from the sources $p_i$ and as well as from the boundary $\partial \Sigma^+$ thanks to \eqref{pap}, therefore  \beq\label{iuni}\sum_{j=1}^N \log K(\xi_j)
=O(1),\quad \sum_{i=1}^{\ell}\alpha_i\sum_{j=1}^N G(\xi_j,p_i)=O(1)\;\hbox{  in }{\cal B}\eeq with the above quantity $O(1)$ uniformly bounded independently of $M$. On the other hand in the set ${\cal B}$ we also have $G(\xi_j,\xi_k)\leq \log M+C$ for $j\neq k$ by \eqref{0948}. Consequently for large $M$ we also have ${\cal B}\subset {\cal D}$. We have thus proved property (P1). 

\subsection{Proof of (P2).}\label{section:P2} During this section we assume that assumptions  (H1), (H2), (H3), (H4) hold. 
 We begin by providing the following crucial intersection property which is an easy consequence of a topological degree argument. 
\begin{lemma}\label{inters} For any $j=1,\dots,N$ let ${\cal P}_j $  be a retraction of $\Sigma^+ \setminus \{p_1,\dots,p_\ell\}$ onto $\sigma_j$, i.e. $\mathcal{P}_j: \Sigma^+ \setminus \{p_1,\dots,p_\ell\} \to \sigma_j$ is a continuous map so that $\mathcal{P}_j  \big|_{\sigma_j}=\hbox{id}_{\sigma_j}$. Then for any  $\gamma \in \mathcal{F}$ there exists $\bbm[\xi]_\gamma^* \in {\cal B}$ such that $$\mathcal{P}_j(\gamma_j(\bbm[\xi]_\gamma^*))=\xi_j^0\quad \forall j=1,\dots,N.$$
\end{lemma}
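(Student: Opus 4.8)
The plan is to prove Lemma~\ref{inters} by a degree-theoretic argument applied to a suitable finite-dimensional map built from $\gamma$ and the retractions $\mathcal P_j$. First I would reduce the problem to finding a zero of a map into a product of circles. Choose an orientation-preserving homeomorphism identifying each $\sigma_j$ with $S^1\subset\mathbb R^2\setminus\{0\}$, or, more conveniently, with a fixed circle so that $\xi_j^0$ corresponds to a base point; since each $\sigma_j$ is a simple closed curve this is possible. For $\gamma\in\mathcal F$ consider the map
\[
\Upsilon_\gamma:{\cal B}\to \sigma_1\times\cdots\times\sigma_N,\qquad
\Upsilon_\gamma(\bbm[\xi]):=\big(\mathcal P_1(\gamma_1(\bbm[\xi])),\dots,\mathcal P_N(\gamma_N(\bbm[\xi]))\big),
\]
which is well defined and continuous because $\gamma$ takes values in ${\cal D}\subset{\cal M}^+\subset(\Sigma^+\setminus\{p_1,\dots,p_\ell\})^N$, so each component $\gamma_j(\bbm[\xi])$ lies in the domain of $\mathcal P_j$. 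The assertion of the lemma is exactly that $\Upsilon_\gamma$ hits the point $\bbm[\xi]_0=(\xi_1^0,\dots,\xi_N^0)$; thus I would show $\bbm[\xi]_0\in\operatorname{Im}\Upsilon_\gamma$ for every $\gamma\in\mathcal F$.

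Next I would use the homotopy $\Gamma$ provided in the definition of $\mathcal F$ to compare $\Upsilon_\gamma$ with the ``identity'' map $\Upsilon_{\mathrm{id}}$. Define $H:[0,1]\times{\cal B}\to\sigma_1\times\cdots\times\sigma_N$ by $H(t,\bbm[\xi]):=\big(\mathcal P_1(\Gamma_1(t,\bbm[\xi])),\dots,\mathcal P_N(\Gamma_N(t,\bbm[\xi]))\big)$, which is continuous and takes values in the target because $\Gamma$ maps into ${\cal D}$. At $t=0$ we have $H(0,\cdot)=\Upsilon_{\mathrm{id}}$, where $\Upsilon_{\mathrm{id}}(\bbm[\xi])=(\mathcal P_1(\xi_1),\dots,\mathcal P_N(\xi_N))$; since on ${\cal B}$ each $\xi_j$ already lies on $\sigma_j$ and $\mathcal P_j|_{\sigma_j}=\mathrm{id}_{\sigma_j}$, in fact $\Upsilon_{\mathrm{id}}(\bbm[\xi])=(\xi_1,\dots,\xi_N)$, i.e. $\Upsilon_{\mathrm{id}}$ is (the inclusion of) the natural projection of ${\cal B}\subset\sigma_1\times\cdots\times\sigma_N$. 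At $t=1$, $H(1,\cdot)=\Upsilon_\gamma$. Crucially, by the boundary condition $\Gamma(t,\bbm[\xi])=\bbm[\xi]$ for all $\bbm[\xi]\in{\cal B}_0$, the homotopy $H$ is \emph{fixed on ${\cal B}_0$}: $H(t,\bbm[\xi])=(\xi_1,\dots,\xi_N)$ for $\bbm[\xi]\in{\cal B}_0$. Hence $\bbm[\xi]_0\notin H(t,{\cal B}_0)$ for all $t$, because $\bbm[\xi]_0$ lies in the interior $W$ of ${\cal B}$ (the pairwise distances $d_g(\xi_j^0,\xi_k^0)$ are strictly larger than $M^{-1}$ for $M$ large), hence not on ${\cal B}_0=\{\min_{j\neq k}d_g(\xi_j,\xi_k)=M^{-1}\}$. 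By homotopy invariance of the Brouwer degree (over the product of circles, or equivalently after identifying a neighborhood of $\bbm[\xi]_0$ in ${\cal B}$ with an open subset of $\mathbb R^N$ via the product of arclength charts on the $\sigma_j$), $\deg(\Upsilon_\gamma,W,\bbm[\xi]_0)=\deg(\Upsilon_{\mathrm{id}},W,\bbm[\xi]_0)=\deg(\mathrm{id},W,\bbm[\xi]_0)=1\neq 0$, which forces the existence of $\bbm[\xi]_\gamma^*\in W\subset{\cal B}$ with $\Upsilon_\gamma(\bbm[\xi]_\gamma^*)=\bbm[\xi]_0$, i.e. $\mathcal P_j(\gamma_j(\bbm[\xi]_\gamma^*))=\xi_j^0$ for all $j$.

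The main technical obstacle is making the degree argument rigorous on ${\cal B}=\overline W$, which is a priori only a compact connected subset of the $N$-torus $\sigma_1\times\cdots\times\sigma_N$ (cut out by the constraints $d_g(\xi_j,\xi_k)>M^{-1}$) rather than a ball, so one cannot directly invoke Brouwer degree on Euclidean balls. I would handle this by working with the Brouwer degree for maps between manifolds (here the $N$-torus $\mathbb T^N=\sigma_1\times\cdots\times\sigma_N$): $\Upsilon_\gamma$ and the projection $\Upsilon_{\mathrm{id}}$ are both maps ${\cal B}\to\mathbb T^N$, the homotopy $H$ does not move ${\cal B}_0=\partial{\cal B}$ points, so $H$ avoids $\bbm[\xi]_0$ on $\partial{\cal B}$ throughout, and the local degree at $\bbm[\xi]_0$ is homotopy-invariant; since $\Upsilon_{\mathrm{id}}$ is the inclusion ${\cal B}\hookrightarrow\mathbb T^N$, which has local degree $1$ at the interior point $\bbm[\xi]_0$, the degree of $\Upsilon_\gamma$ at $\bbm[\xi]_0$ is $1$, giving the claimed intersection point. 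Alternatively, and perhaps more transparently for the write-up, one can localize: restrict attention to a small closed polydisc neighborhood $\overline{\mathcal U}\subset W$ of $\bbm[\xi]_0$ on which $\Upsilon_{\mathrm{id}}$ is a homeomorphism onto its image, push everything into $\mathbb R^N$ via product arclength coordinates, and apply the classical Brouwer degree together with the excision and homotopy properties, using again that $H$ never equals $\bbm[\xi]_0$ on $\partial\mathcal U$ (which holds because on $W\setminus\mathcal U$ the projection $\Upsilon_{\mathrm{id}}$ stays away from $\bbm[\xi]_0$ while $H$ equals $\Upsilon_{\mathrm{id}}$ on ${\cal B}_0$; some care is needed here, and one may need to enlarge the homotopy-avoidance set — this bookkeeping is the only genuinely delicate point). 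Either route yields the lemma.
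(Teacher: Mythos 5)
Your proposal is correct in substance and rests on the same core mechanism as the paper's proof: a topological degree argument combined with homotopy invariance, exploiting that the homotopy $\Gamma$ is pinned on ${\cal B}_0$, which contains the topological boundary of $W$ relative to $\bbm[\sigma]=\sigma_1\times\cdots\times\sigma_N$, so that the target point $\bbm[\xi]_0$ (which lies in $W$, its pairwise distances being strictly larger than $M^{-1}$) is never hit on the boundary. The difference is in the technical realization of the degree. You invoke the degree for maps between $N$-manifolds (the local, homological degree of $\Upsilon_\gamma:\overline W\to\mathbb T^N$ at $\bbm[\xi]_0$), with $\Upsilon_{\mathrm{id}}$ the inclusion of degree $1$; this is clean and slightly more conceptual, but it requires the manifold version of degree theory. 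The paper instead stays within the classical Euclidean Brouwer degree: it first extends $\Gamma$ from ${\cal B}$ to all of $\bbm[\sigma]$ by the identity (continuity being guaranteed precisely because ${\cal B}_0$ is the relative boundary), identifies each $\sigma_j$ with $\mathbb S^1$, and then extends the resulting self-homotopy of $(\mathbb S^1)^N$ radially to the product of planar annuli $U^N\subset\R^{2N}$; the modulus-preserving property $|\tilde{\cal S}_j(t,\bbm[u])|=|u_j|$ makes $\partial(U^N)$ invariant, so the classical degree at $\bbm[\xi]^0\in U^N$ is $1$, and the same property forces the solution back onto the torus, after which a short argument (using that the extension is the identity off ${\cal B}$) places it in ${\cal B}$. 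Both routes are valid; the paper's buys elementarity at the cost of the extension construction, yours buys brevity at the cost of citing manifold degree theory. One caveat: your ``alternative, more transparent'' localization to a small polydisc $\overline{\mathcal U}$ around $\bbm[\xi]_0$ does not work as stated, because the homotopy $H$ is controlled only on ${\cal B}_0$, not on $\partial\mathcal U\subset W$, so nothing prevents $H(t,\cdot)$ from hitting $\bbm[\xi]_0$ on $\partial\mathcal U$ at intermediate times; the degree must be computed on all of $W$. You flag this yourself, and your primary route does not suffer from it.
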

\begin{proof}
Let $\gamma\in {\mathcal F},$ namely $\gamma:{\cal B}\to {\cal D}$ is a continuous map such that  there exists a continuos homotopy $\Gamma:[0,1]\times {\cal B}\to {\cal D}$ satisfying:       $$\Gamma(0,\cdot)=id_{\cal B},\quad \Gamma(1,\cdot)=\gamma,\quad \Gamma(t,\bbm[\xi])=\bbm[\xi]\;\;\forall t\in [0,1],\,\forall \bbm[\xi]\in {\cal B}_0.$$ 
Extend $\Gamma$ continuously from ${\cal B}$  to $\bbm[\sigma]:=\sigma_1\times\ldots\times\sigma_N$ 
as $\tilde \Gamma:[0,1]\times\bbm[\sigma]\to {\cal D}$ defined simply as 
$$\widetilde \Gamma(t,\bbm[\xi])=\Gamma(t,\bbm[\xi])\;\;\hbox{ if }\bbm[\xi]\in {\cal B},\;\;\;\; \widetilde \Gamma(t,\bbm[\xi])=\bbm[\xi]\;\;\hbox{ if }\bbm[\xi]\in \bbm[\sigma]\setminus {\cal B}.$$
Notice that ${\cal B}_0$ is the topological boundary of ${\cal B}$ relative to $\bbm[\sigma]$, then $\tilde\Gamma$ is a continuos map and$$\tilde{\Gamma}(0,\cdot)=id_{\bbm[\sigma]},\quad \tilde{\Gamma}(t,\cdot)\big|_{{\cal B}_0}=id_{{\cal B}_0} ,\quad \tilde{\Gamma}(t,\cdot)\big|_{\bbm[\sigma]\setminus {\cal B}}=id_{\bbm[\sigma]\setminus {\cal B}}\quad \forall t\in [0,1].
$$
Set $\gamma=(\gamma_1,\ldots, \gamma_N) $ and  $\tilde\Gamma=(\tilde\Gamma_1,\ldots, \tilde\Gamma_N)$ with $\gamma_j:{\cal B}\to \Sigma^+$ and $\tilde\Gamma_j: [0,1]\times \bbm[\sigma]\to \Sigma^+$. Then the map ${\cal S}:[0,1]\times  \bbm[\sigma]\to\bbm[\sigma]$ with components
$${\cal S}_j(t,\bbm[\xi])=({\cal P}_j\circ \tilde\Gamma_j)(t,\bbm[\xi]) ,\quad j=1,\ldots, N,$$ is continuous and 
satisfies \beq\label{zuu}{\cal S}(0,\cdot)=id_{\bbm[\sigma]},\quad {\cal S}(t,\cdot)\big|_{\bbm[\sigma]\setminus {\cal B}}=id_{\bbm[\sigma]\setminus {\cal B}}\quad \forall t\in [0,1].
\eeq
In order to apply a degree argument, we can identify each $\sigma_j$, $j=1,\dots,N$, with $\mathbb{S}^1$ through a suitable homeomorphism, and then regard ${\cal S}$ as a map  $[0,1] \times (\mathbb{S}^1)^N \to (\mathbb{S}^1)^N$ with ${\cal S}(0,\cdot)=id_{(\mathbb{S}^1)^N}$. We consider the annulus in $\R^2$ 
$$U:=\left\{u\in \R^2\,\Big|\, \frac12<|u|<2\right\}.$$ Then we extend ${\cal S}$ from $ (\mathbb{S}^1)^N$ to $\overline U^N$ as $\tilde{\cal S}$ having components 
$$\tilde{\cal S}_j(t,\bbm[u])=|u_j|{\cal S}_j\Big(t, \frac{u_1}{|u_1|},\ldots,\frac{u_N}{|u_N|}\Big), \quad \forall \bbm[u]=(u_1,\ldots, u_N)\in\overline U^N. $$ 
Notice that $\frac{u_j}{|u_j|}\in \mathbb{S}^1$ for $u_j\in \overline U$, so $\tilde{\cal S}_j$ is well defined.
Clearly $\tilde{\cal S}$ is a continuous map by construction and 
$$\tilde{\cal S}(0,\cdot)=id_{\overline U^N}.$$
Moreover the definition of $\tilde{\cal S}$ yields  \beq\label{fre}|\tilde{\cal S}_j(t,\bbm[u])|=|u_j|\quad \forall t\in [0,1], \forall \bbm[u]\in\overline {U}^N \eeq and, consequently,
$$\tilde{\cal S}\big(t,U^N\big)\subset U^N\quad \forall t\in [0,1]$$
and $$\tilde{\cal S}\big(t, \partial \big(U^N\big)\big)\subset \partial \big(U^N\big)\quad \forall t\in [0,1].$$
Once we have proved the crucial property that $\tilde{\cal S}$  maps the boundary $\partial\big (U^N\big)$ into itself, now we are in the position to apply  a topological degree argument: indeed,  the homotopy invariance  gives that if $\bbm[u]\in U^N$ then $\deg(\tilde{\cal S}(1, \cdot), U^N, \bbm[u])=\deg(\tilde{\cal S}(0, \cdot), U^N, \bbm[u])=\deg (id, U^N, \bbm[u])=1$. 
In particular$$\deg(\tilde{\cal S}(1, \cdot), U^N, \bbm[\xi]^0)=1$$ where $\bbm[\xi]^0 \in (\mathbb{S}^1)^N$ corresponds to the original $\bbm[\xi]_0 \in \bbm[\sigma]$ through the identifications of each $\sigma_j$ with $\mathbb{S}^1$. Then, there exists $\bbm[u]^*=(u_1^*,\ldots, u_N^*)\in U^N$ so that
$$\tilde{\cal S}(1, \bbm[u]^*)=\bbm[\xi]^0.$$
Thanks to \eqref{fre} we get $\bbm[u]^*\in (\mathbb{S}^1)^N$, which, in  turn, implies $${\cal S}(1, \bbm[u]^*)=\tilde{\cal S}(1, \bbm[u]^*)=\bbm[\xi]^0.$$
Getting back to $\bbm[\sigma]$ again by the isomorphism $\bbm[\sigma]\approx  (\mathbb{S}^1)^N$, we deduce the existence of   $\bbm[\xi]^*=(\xi_1^*,\ldots, \xi_N^*)\in \bbm[\sigma]$ such that $${\cal S}(1, \bbm[\xi]^*)=\bbm[\xi]_0.$$ We claim that $\bbm[\xi]^*\in {\cal B}$:  otherwise, if $\bbm[\xi]^* \in \bbm[\sigma] \setminus {\cal B}$, then ${\cal S}(1,\bbm[\xi]^*)=\bbm[\xi]^*$ by \eqref{zuu}, which would lead to $\bbm[\xi]^*=\bbm[\xi]_0 $, and this provides a contradiction with $\bbm[\xi]_0\in {\cal B}$. So, $\bbm[\xi]^*\in {\cal B}$ and $$\mathcal{P}_j(\gamma_j(\bbm[\xi]^*))={\cal P}_j(\Gamma_j(1, \bbm[\xi]^*))={\cal S}_j(1,\bbm[\xi]^*)=\xi_j^0\quad\forall j=1,\dots,N.$$\end{proof}

Now  we are going to prove (P2). 
 The definition of the max-min value $\Psi^*$ in \eqref{mima} depends on the particular $M>0$ chosen in \eqref{didi}. To emphasize this fact we denote this max-min value by $\Psi^*_M$. In the remaining part of this section we will prove that (P2) holds for $M$ sufficiently large. To this aim we need the estimate for $\Psi^*_M$ provided by the following two propositions 
 which prove the uniform boundedness (with respect to $M$)  under the assumptions of Proposition \ref{prop:ptocrit1} and Proposition \ref{prop:ptocrit2}, respectively. 
 
For the sake of simplicity, in the proofs we  will use the additional assumption that $$\Sigma^+ \hbox{ is connected}.$$ This assumption is made without loss of generality in this framework: indeed, if $ \Sigma^+$ is not connected, then  it is sufficient  to replace $\Sigma^+$ by one of its  connected components in the definition of the set ${\cal M}^+$ in \eqref{emme}, and then confining the
search of a critical point for $\Psi$ to such a component. 

\begin{remark}\label{2.5} Anyway let us stress that if $\Sigma^+$ is not connected, then the results of Proposition \ref{prop:ptocrit1} and Proposition \ref{prop:ptocrit2} may possibly be improved (allowing larger values of $N$ in Proposition \ref{prop:ptocrit2}, for instance) by suitably gluing the construction in each  connected components. Anyway the optimal results for non-connected surface would require  some more technicality  and we will not
comment on this issue any further.
\end{remark} 
\begin{proposition}\label{loweresti}
Assume that $\Sigma^+$ is connected and   non contractible. Then the quantity $\Psi^*_M$ is bounded independently  of the large number $M$ used to define ${\cal D}$, namely there exist two constants $c, C$ independent of $M$ such that 
\begin{equation}\label{rox}c\leq\Psi^*_M
\leq C.\end{equation}
\end{proposition}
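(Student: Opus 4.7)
The plan is to prove the two inequalities separately. The lower bound is straightforward, while the upper bound crucially exploits the non-contractibility of $\Sigma^+$ via Lemma \ref{inters}.

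\emph{Lower bound.} I would use the trivial admissible choice $\gamma=\mathrm{id}_{\mathcal B}$ (paired with the constant homotopy $\Gamma(t,\bbm[\xi])\equiv\bbm[\xi]$), which clearly belongs to $\mathcal F$, so that $\Psi^*_M\geq\min_{\mathcal B}\Psi$. For $\bbm[\xi]\in\mathcal B$ the components $\xi_j\in\sigma_j$ stay uniformly bounded away from $\partial\Sigma^+$ and from $\{p_1,\ldots,p_\ell\}$, so by \eqref{iuni} and the continuity of $h$ on $\Sigma\times\Sigma$ the terms $\mathcal H(\bbm[\xi])$, $\sum_j\log K(\xi_j)$ and $\sum_i\alpha_i\sum_jG(\xi_j,p_i)$ are $O(1)$ uniformly in $M$. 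The pairwise interaction is controlled from below via $d_g(\xi_j,\xi_k)\leq\mathrm{diam}(\Sigma)$ together with the decomposition \eqref{0948}, giving $G(\xi_j,\xi_k)\geq-C$. Hence $\Psi\geq-C$ on $\mathcal B$, yielding $\Psi^*_M\geq-C$ independently of $M$.

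\emph{Upper bound.} The key idea is to select the curves, basepoints and retractions in Lemma \ref{inters} so that every $\gamma\in\mathcal F$ is forced to send a test point $\bbm[\xi]^*_\gamma$ into a region where $\Psi$ is uniformly bounded above. Specifically, since $\Sigma^+$ is non-contractible, I would pick a simple closed non-contractible curve $\sigma\subset\Sigma^+\setminus\{p_1,\ldots,p_\ell\}$ that is a retract of the punctured connected component, set $\sigma_j=\sigma$ for every $j$ with distinct reference points $\xi_j^0\in\sigma$, and construct retractions $\mathcal P_j:\Sigma^+\setminus\{p_1,\ldots,p_\ell\}\to\sigma$ whose fibers $\tau_j:=\mathcal P_j^{-1}(\xi_j^0)$ are compact subsets of $\Sigma^+$, pairwise at positive geodesic distance, and uniformly bounded away from each $p_i$ with $\alpha_i<0$. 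Concretely, this is arranged by collapsing small neighborhoods of the ``bad'' $p_i$'s and of each $\xi_k^0$ with $k\neq j$ to auxiliary points of $\sigma\setminus\{\xi_j^0\}$, while projecting a tubular neighborhood of $\sigma$ onto $\sigma$ in the standard way.

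With this setup, Lemma \ref{inters} furnishes, for each $\gamma\in\mathcal F$, a point $\bbm[\xi]^*_\gamma\in\mathcal B$ with $\gamma_j(\bbm[\xi]^*_\gamma)\in\tau_j$ for every $j$, and then every term of $\Psi(\gamma(\bbm[\xi]^*_\gamma))$ is uniformly controlled in $M$: $\mathcal H$ is bounded on $(\Sigma^+)^N$ thanks to (H4) and the continuity of $h$; $\log K(\gamma_j)\leq\log\sup_\Sigma K$; the positive separation of the $\tau_j$'s bounds $\sum_{j\neq k}G(\gamma_j,\gamma_k)$ from above; and $-\sum_{i=1}^\ell\alpha_i\sum_jG(\gamma_j,p_i)$ is bounded above because $\gamma_j$ stays away from the $p_i$'s with negative $\alpha_i$, while for $\alpha_i>0$ the quantity $-\alpha_iG(\gamma_j,p_i)$ can only drift to $-\infty$ as $\gamma_j\to p_i$. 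Consequently $\Psi(\gamma(\bbm[\xi]^*_\gamma))\leq C$ with $C$ independent of $\gamma$ and $M$, and $\Psi^*_M\leq C$ follows. The main obstacle is the concrete construction of the retractions $\mathcal P_j$ with all the listed separation properties, which is exactly where the non-contractibility hypothesis is used essentially.
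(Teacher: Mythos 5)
Your lower bound is correct and is exactly the paper's argument: take $\gamma=id_{\mathcal B}$ and use \eqref{iuni} together with the upper bound $G(\xi_j,\xi_k)\geq -C$ coming from \eqref{0948}. Your upper bound also follows the paper's overall strategy (apply Lemma \ref{inters} and evaluate $\Psi$ at $\gamma(\bbm[\xi]^*_\gamma)$, whose components are trapped in prescribed fibers), but the decisive step is missing.

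The gap is the existence of retractions $\mathcal P_j$ whose fibers $\tau_j=\mathcal P_j^{-1}(\xi_j^0)$ are pairwise at positive distance: this is precisely where non-contractibility must be used quantitatively, and your sketch does not deliver it. A retraction is a globally defined map on all of $\Sigma^+\setminus\{p_1,\dots,p_\ell\}$; collapsing small neighborhoods of the points $\xi_k^0$, $k\neq j$, and projecting a tubular neighborhood of $\sigma$ only controls $\mathcal P_j$ near $\sigma$, and says nothing about where the fiber $\mathcal P_j^{-1}(\xi_j^0)$ goes in the rest of the surface. Since you use a \emph{different} retraction for each index, the sets $\tau_j$ and $\tau_k$ need not even be disjoint: nothing prevents a point $q$ far from $\sigma$ from satisfying $\mathcal P_j(q)=\xi_j^0$ and $\mathcal P_k(q)=\xi_k^0$ simultaneously, in which case the interaction term $\sum_{j\neq k}G(\gamma_j,\gamma_k)$ is not controlled and the upper bound fails. (Even for a single retraction, disjointness of fibers over distinct points does not give a \emph{uniform} lower bound on their mutual distance without extra geometric input.) The paper closes this gap with explicit constructions, split according to the genus of $\Sigma$: for ${\cal G}(\Sigma)=0$ it identifies $\Sigma^+$ with a planar domain having a spherical hole $B_1$ and uses the single radial retraction $\mathcal P(\xi)=\rho\,\xi/|\xi|$, whose fibers are rays staying at distance $\geq 1$ from the origin and hence uniformly separated as in \eqref{fiber}; for ${\cal G}(\Sigma)>0$ it embeds $\Sigma$ in $\R^3$ so that \eqref{toro}--\eqref{toro2} hold and uses $\mathcal P(x,y,z)=\bigl(\tfrac{x}{x^2+y^2},\tfrac{y}{x^2+y^2},0\bigr)$, whose fibers lie on vertical half-planes at distance $\geq 1$ from the $z$-axis. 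You would need to supply such a concrete construction (or an equivalent separation argument) for your proof to be complete. A further small inaccuracy: the fibers of any retraction of the open set $\Sigma^+\setminus\{p_1,\dots,p_\ell\}$ cannot all be compact subsets of $\Sigma^+$ (they cover the whole punctured surface, so some must accumulate on $\partial\Sigma^+$); fortunately compactness is not needed for the upper bound, since $\log K$ is bounded above and $-\alpha_i G(\cdot,p_i)$ is bounded above near $p_i$ when $\alpha_i>0$, as you correctly observe.
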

\begin{proof}
To prove the lower boundedness it is sufficient to take $\gamma=id_{\cal B}$ in the definition \eqref{mima}:
\beq\label{lower}\Psi^*_M\geq \min_{\hbox{\scriptsize$\bbm[\xi]$}\in {\cal B}}\Psi(\bbm[\xi])\geq \min_{\hbox{\scriptsize$\bbm[\xi]$}\in {\cal B}}\bigg(\frac{1}{4\pi}\sum_{j=1}^N \log K(\xi_j)-\sum_{i=1}^{\ell}\alpha_i\sum_{j=1}^N G(\xi_j,p_i)-C\bigg) .\eeq	 As we have already observed in \eqref{iuni},   the function in the bracket is uniformly bounded 
in
the set ${\cal B}$
independently of $M$.

 To get an upper estimate for the max-min value we need that a crucial intersection property is accomplished, and this will follow from Lemma \ref{inters} for a  suitable choice of curves $\sigma_j$ and points $\xi_j^0$.  Since such a choice depends on the topological properties of $\Sigma$,  in order to perform the geometrical construction  it is convenient to distinguish the two cases $${\cal G}(\Sigma)=0\hbox{ and }{\cal G}(\Sigma)>0, $$ 
where  ${\cal G}(\Sigma)$ denotes the genus of $\Sigma$. 

 Before going on we observe that the thesis of the proposition is invariant under diffeomorphism: more precisely, assume that    $\omega:\overline{\Sigma^+}\to \omega(\overline{\Sigma^+})$ is a diffemomorphism   and suppose that we have proved the thesis   for the functional  $\Psi_M \circ\omega^{-1}(\xi')$ defined for $\xi'\in \omega(\overline{\Sigma^+})$ with the corresponding sets $\omega({\cal D}), \omega({\cal B}),\omega( {\cal B}_0)$; then, denoting by $g'$ the metric on $\omega(\overline{\Sigma^+})$ and setting $\xi':=\omega(\xi)$, we have    \beq\label{deus}c d_{g'}(\xi_j',\xi_k')\leq d_g(\xi_j,\xi_k)\leq Cd_{g'}(\xi_j',\xi_k').\eeq  
 Since the unbounded terms in the definition of $\Psi_M$ just involve the logarithm of the distance function or the logarithm of $K$,  then thanks to \eqref{deus}  the thesis continues to hold for our original functional $\Psi_M$. 
 
So, in the remaining part of the proof without loss of generality we may  replace $\Sigma^+$ with a topologically equivalent surface.

\begin{enumerate}
\item[]{\bf{Case I}: ${\cal G}(\Sigma)=0$}.
\end{enumerate}

The case of genus zero corresponds to a surface $\Sigma$ which is a topological sphere.  Then $\Sigma^+$ turns out to be  diffeomorphic to a planar domain which is non contractible.  So let us assume  that $\Sigma^+$ coincides with a  planar domain with a spherical hole of radius $1$: more precisely$$\Sigma ^+\equiv\Omega\subset \R^2,$$ and $$B_1:=\{x^2+y^2<1
\}\hbox{ is a connected component of }\R^2\setminus \overline\Omega.$$

In this case the  construction we are going to set up is based on a similar  argument carried out in \cite{DKM} in a Euclidean context.

Let us fix a radius $\rho>1$ sufficiently close to $1$ in such a way that the circle centered at $0$ with radius $\rho$ is contained in $\Omega$: $$\sigma:=\{x^2+y^2=\rho^2\}\subset \Omega,$$ and $\sigma$ does not intersect any of the singular points $p_i$:
$$p_i\not\in\sigma\quad\forall i=1,\ldots,\ell.$$ We construct a retraction of  $\Sigma^+\equiv\Omega$ onto $\sigma$ by simply projecting along rays starting from $0$:
$${\cal P}(\xi)=\rho\frac{\xi}{|\xi|}\quad \forall \xi\in \Omega.$$ 
Then we apply Lemma \ref{inters} by taking $$\sigma_j=\sigma,\qquad {\cal P}_j={\cal P}\quad \forall j=1,\ldots, N,$$
and we find that for any $\gamma\in {\cal F}$ there exists $\bbm[\xi]^*_\gamma\in {\cal B}$ such that 
$$\gamma_j(\bbm[\xi]^*_\gamma)\in {\cal P}^{-1}(\xi^0_j)\quad\forall j=1,\ldots, N.$$
By construction  the fibers of ${\cal P}$ are half-lines emanating from zero and  are well-separated thanks to the presence of the hole $B_1$, then, since $\xi_j^0\neq \xi_k^0$ for $j\neq k$, there exists $\mu>0$ such that 
\beq\label{fiber}{\rm{dist}}_{\rm{eucl}}({\cal P}^{-1}(\xi^0_j),{\cal P}^{-1}(\xi^0_k))\geq \mu \quad \forall j\neq k\eeq
which implies $$G(\gamma_j(\bbm[\xi]^*_\gamma), \gamma_k(\bbm[\xi]^*_\gamma))=O(1)\quad \forall j\neq k$$with the above quantity $O(1)$ uniformly bounded independently of $\gamma$. So an upper bound on $\Psi_M^*$ is obtained by evaluating on $\gamma(\bbm[\xi]^*_\gamma)$ as follows:
$$\begin{aligned}\min_{\hbox{\scriptsize$\bbm[\xi]$}\in {\cal B}}\Psi(\gamma(\bbm[\xi]))&
\leq \Psi( \gamma(\bbm[\xi]^*_\gamma))\leq 
\sum_{j,k=1\atop j\neq k}^NG(\gamma_j(\bbm[\xi]^*_\gamma), \gamma_k(\bbm[\xi]^*_\gamma))+C \leq C.
\end{aligned}$$
 Hence,  by taking the supremum for all the maps $\gamma\in{\cal F}$,   we conclude that the max-min value $\Psi^*_M$ is bounded above  independently of $M$, as desired. 

\medskip

\begin{enumerate}
\item[]{\bf{Case II}: ${\cal G}(\Sigma)>0$}.
\end{enumerate}
According to the classification of compact connected orientable surfaces (see \cite[Theorem 3.7, page 217]{hir}) we have that $\overline{\Sigma^+}$ is diffeomorphic to the surface obtained from an orientable closed surface  by removal of the interiors of $k$ disjoints disks. So let us assume that $$\Sigma\setminus \overline{\Sigma^+}\hbox{ is the disjoint union of  } k \hbox{ open disks}.$$
Moreover, since the genus of $\Sigma$ is positive, up to a new diffeomorphism we can also assume that $\Sigma$ is embedded in $\R^3$ and  satisfies 
\beq\label{toro}
\sigma:=\{x^2+y^2=1,\;z=0\} \subset \Sigma,\:\: \Sigma\cap \{x^2+y^2<1\}=\emptyset,\eeq \beq\label{toro2}\sigma\subset \Sigma^+,\;\;\sigma \cap \{p_1,\dots,p_\ell\}=\emptyset. \eeq
This is quite obvious when ${\cal G}(\Sigma)=1$: indeed, in this case $\Sigma$ is diffeomorphic to the torus
\beq\label{toro3}\{(x,y,z)\in \R^3\,|\, (\sqrt{x^2+y^2}-2)^2+z^2=1\}\eeq
which satisfies  \eqref{toro}; moreover,
possibly slightly perturbing the diffeomorphism, we can always assume that the singular sources $p_i$ ($i=1,\dots,\ell$) in $\Sigma^+$ do not belong to $\sigma$ and that $\sigma$ does not intersect
the $k$ disks of $\Sigma\setminus \overline{\Sigma^+}$, so that \eqref{toro2} holds. When ${\cal G}(\Sigma)=m \geq 2$, $\Sigma$ is diffeomorphic to  the connected sum  of $m$ torii, obtained by gluing in a smooth way the torus \eqref{toro3} with other $m-1$ torii outside the cylinder $\{x^2+y^2\leq 2\}$. Also in this case, $\Sigma$ satisfies properties \eqref{toro}-\eqref{toro2}. 

In what follows we adapt some argument used in \cite{tea&pp} for $K$ positive. Notice that the above assumptions \eqref{toro}-\eqref{toro2} are crucial to define a retraction of $\Sigma^+$ onto $\sigma$ as
\begin{eqnarray} 
\mathcal P (x,y,z)=\left( \frac{x}{x^2+y^2},\frac{y}{x^2+y^2},0\right). \label{mapP}
\end{eqnarray}
Indeed, by \eqref{toro}-\eqref{toro2} the map $\mathcal P:\: \Sigma^+ \to \sigma$ is well-defined and continuous with $\mathcal P\big|_\sigma=id_\sigma$.
Then we apply Lemma \ref{inters} by taking $$\sigma_j=\sigma,\qquad {\cal P}_j={\cal P}\quad \forall j=1,\ldots, N,$$
and we find that for any $\gamma\in {\cal F}$ there exists $\bbm[\xi]^*_\gamma\in {\cal B}$ such that 
$$\gamma_j(\bbm[\xi]^*_\gamma)\in {\cal P}^{-1}(\xi^0_j)\quad\forall j=1,\ldots, N.$$
Let us investigate the structure of the fibers of ${\cal P}$ in this case:  the fibers of ${\cal P}$ lie on  vertical half-planes starting from the $z$-axis  and their (euclidean) distance from the $z$-axis is greater than $1$ in view of \eqref{toro}. Then they are  well-separated, so  \eqref{fiber} is satisfied and we conclude as in the previous case.

\end{proof}

\begin{proposition}\label{loweresti2}
Assume that $\Sigma^+$ is connected and contractible and that the inequality \eqref{ineq} is satisfied. Then the same thesis of Proposition \ref{loweresti} holds. \end{proposition}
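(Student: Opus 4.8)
The lower bound in \eqref{rox} will be obtained exactly as in Proposition~\ref{loweresti}: taking $\gamma=\mathrm{id}_{\cal B}$ in \eqref{mima} gives $\Psi^*_M\geq\min_{\bbm[\xi]\in{\cal B}}\Psi(\bbm[\xi])$, and on ${\cal B}$ each $\xi_j$ lies on one of the \emph{fixed} curves $\sigma_j$, hence at a distance from $\partial\Sigma^+$ and from $p_1,\dots,p_\ell$ that is bounded below independently of $M$; since moreover $G$ is bounded from below, $\Psi\geq-C$ on ${\cal B}$ with $C$ independent of $M$. So the whole issue is the upper bound $\Psi^*_M\leq C$, which I will get by choosing the curves $\sigma_j$, the base point $\bbm[\xi]_0$ and the retractions in Lemma~\ref{inters} suitably. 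As recalled in the proof of Proposition~\ref{loweresti}, the statement is invariant under diffeomorphisms of $\overline{\Sigma^+}$, so I may assume $\Sigma^+$ is a topological disk with $p_1,\dots,p_\ell$ in its interior. Contrary to the non-contractible case there is no ``hole'' to wind around, and the idea is to use the punctures $p_i$ as artificial holes; the role of \eqref{ineq} is precisely to let me spread the $N$ loops among the $p_i$'s without overloading any one of them.

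\emph{Choice of the geometric data.} By \eqref{ineq} I fix integers $n_1,\dots,n_\ell\geq0$ with $\sum_in_i=N$ and $0\leq n_i\leq1+[\alpha_i]^-$ (so $n_i=0$ when $\alpha_i\leq0$ and $n_i\leq1$ when $\alpha_i\in(0,1)$). For each $i$ with $n_i\geq1$ I take a small coordinate disk $D_i\ni p_i$, with $\overline{D_i}\subset\Sigma^+$, the $\overline{D_i}$ pairwise disjoint, and $p_{i'}\notin\overline{D_i}$ for $i'\neq i$; the $n_i$ curves $\sigma_j$ attached to $p_i$ are all taken to be the circle $\partial D_i$, with base points $\xi_j^0\in\partial D_i$ at $n_i$ \emph{distinct} angles, chosen inside an arc of length $<\pi$ so that the angular separations satisfy $|\sin(\theta_j-\theta_k)|\geq c>0$. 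Finally I choose retractions ${\cal P}_j:\Sigma^+\setminus\{p_1,\dots,p_\ell\}\to\sigma_j$ that near $p_i$ coincide with the radial projection onto $\partial D_i$, and that are arranged — exploiting the freedom in the construction and the fact that $\Sigma^+\setminus\{p_1,\dots,p_\ell\}$ is a disk with $\ell$ punctures — so that the $N$ fibers ${\cal P}_j^{-1}(\xi_j^0)$ are pairwise at distance $\geq\mu>0$, the only exception being that two fibers attached to the same $p_i$ are allowed to approach each other, and only inside a small ball $B(p_i,r_0)\subset D_i$, where they coincide with two distinct radial segments issuing from $p_i$ (so that on them $d_g(\eta_j,\eta_k)\geq c\,\max(d_g(\eta_j,p_i),d_g(\eta_k,p_i))$). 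This separation statement is a careful but elementary piece of two-dimensional topology, in the spirit of the Euclidean constructions of \cite{dap,DKM}.

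\emph{The max--min estimate.} By Lemma~\ref{inters}, for every $\gamma\in{\cal F}$ there is $\bbm[\xi]^*_\gamma\in{\cal B}$ with ${\cal P}_j(\gamma_j(\bbm[\xi]^*_\gamma))=\xi_j^0$ for all $j$; setting $\eta_j:=\gamma_j(\bbm[\xi]^*_\gamma)\in{\cal P}_j^{-1}(\xi_j^0)$ it suffices to bound $\Psi(\eta_1,\dots,\eta_N)$ from above by a constant independent of $\gamma$ and $M$, since then $\Psi^*_M\leq\sup_\gamma\Psi(\gamma(\bbm[\xi]^*_\gamma))\leq C$. In
\[\Psi(\bbm[\eta])={\cal H}(\bbm[\eta])+\frac1{4\pi}\sum_j\log K(\eta_j)-\sum_{i=1}^\ell\alpha_i\sum_jG(\eta_j,p_i)+\sum_{j\neq k}G(\eta_j,\eta_k)\]
the term ${\cal H}(\bbm[\eta])$ is $O(1)$ (smooth on $(\overline{\Sigma^+})^N$), $\frac1{4\pi}\sum_j\log K(\eta_j)\leq O(1)$ since $K$ is bounded on $\Sigma$, every $G(\eta_j,p_i)$ with $\eta_j\notin B(p_i,r_0)$ is $O(1)$, and, by the separation of the fibers, every $G(\eta_j,\eta_k)$ except those with $\eta_j,\eta_k$ in one common $B(p_i,r_0)$ is $O(1)$. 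Hence, with $J_i:=\{j:\eta_j\in B(p_i,r_0)\}$ (which, by construction, consists of indices attached to $p_i$, so $n_i':=|J_i|\leq n_i$), we get $\Psi(\bbm[\eta])\leq O(1)+\sum_i\Lambda_i$, where $\Lambda_i:=-\alpha_i\sum_{j\in J_i}G(\eta_j,p_i)+\sum_{j,k\in J_i,\,j\neq k}G(\eta_j,\eta_k)$. Using \eqref{0948}, the estimate $d_g(\eta_j,\eta_k)\geq c\,\max(d_g(\eta_j,p_i),d_g(\eta_k,p_i))$ on distinct radial segments, and ordering $a_{(1)}\geq\cdots\geq a_{(n_i')}$ the values $a_j:=\log\frac1{d_g(\eta_j,p_i)}\geq0$, a direct computation gives $\Lambda_i\leq\frac1{2\pi}\sum_{s=1}^{n_i'}\bigl(2(s-1)-\alpha_i\bigr)a_{(s)}+O(1)$. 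The elementary fact that closes the argument — and the only place \eqref{ineq} enters — is that for $n\leq1+[\alpha_i]^-$ the linear functional $L(a)=\sum_{s=1}^n(2(s-1)-\alpha_i)a_s$ is $\leq0$ on the cone $\{a_1\geq\cdots\geq a_n\geq0\}$: that cone is generated by the vectors $(1,\dots,1,0,\dots,0)$ with $k$ ones ($1\leq k\leq n$), on which $L$ equals $k(k-1-\alpha_i)$, and $k-1\leq n-1\leq[\alpha_i]^-<\alpha_i$ makes this negative. Therefore $\Lambda_i\leq O(1)$, so $\Psi(\bbm[\eta])\leq C$ uniformly in $\gamma$ and $M$, which yields $\Psi^*_M\leq C$.

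\emph{Expected main obstacle.} Once the geometric data are in place the analysis is routine: the metric distortions near the $p_i$'s and the possibly non-uniform concentration rates of the $\eta_j$'s are all absorbed into the $O(1)$ terms, and the crucial cancellation is the elementary linear-programming fact isolated above. The delicate point is the construction of the retractions ${\cal P}_j$ with the prescribed separation of the fibers — routing the $N$ fibers so that those attached to different sources stay uniformly apart (in particular no fiber comes close to a source other than its own, which would reverse the favourable sign in the corresponding $\Lambda$-term) while those attached to a common $p_i$ are kept apart except near $p_i$, where at most the allowed number $1+[\alpha_i]^-$ of them may collapse. It is in this step that the contractibility of $\Sigma^+$ and the inequality \eqref{ineq} are genuinely used.
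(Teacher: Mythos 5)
Your proposal is correct and follows essentially the same route as the paper: the singular sources serve as artificial holes, $N_i\le 1+[\alpha_i]^-$ of the curves $\sigma_j$ are taken to be circles around $p_i$ with radial retractions, and the inequality $N_i-1<\alpha_i$ is what makes the source attraction dominate the pairwise repulsion on the fibers. The paper realizes the retractions explicitly as the global radial projections $\xi\mapsto p_i+\delta\frac{\xi-p_i}{|\xi-p_i|}$ with the base points confined to pairwise disjoint angular sectors (so the fiber separation you defer to ``elementary two-dimensional topology'' is immediate), and it closes the estimate by bounding each $\log\frac{1}{|z_j-z_k|}$ by $\log\frac{1}{|z_j-p_i|}+O(1)$ for $j,k$ attached to the same $p_i$ rather than via your ordering/cone argument --- a cosmetic difference.
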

\begin{proof}

$\Sigma^+$ turns out to be diffeomorphic to a two dimensional domain. So, since the thesis of the proposition is invariant under diffeomorphism as we have observed at the beginning of the proof in Proposition \ref{loweresti}, from now on let us assume $$\Sigma^+\equiv \Omega\subset \R^2. $$ We are in the position to adapt the arguments in \cite{dap} for the following geometrical construction. 

A lower bound on $\Psi^*_M$ follows by taking $\gamma=id_{\cal B}$ and reasoning exactly as in \eqref{lower}. 

Let us focus on finding an upper estimate for $\Psi_M^*$. 
Hereafter we will  often use the complex numbers to identify the points in $\R^2$ and we will denote by ${\rm i}$ the imaginary unit.
First of all let us  fix  angles $\theta_i$ ($i=1,\ldots,\ell$)  and  a number $\delta\in (0,\frac{\pi}{2})$ sufficiently small such that 
the cones  \beq\label{di1}\big\{p_i+\rho e^{{\rm i}(\theta_i+\theta)}\,\big|\, \rho\geq0, \,\theta \in [-\delta, \delta]\big\},\,\quad i=1,\ldots,\ell\eeq are disjoint from one another.  We point out that such choice of angles always exists since the set of singular sources  $p_1,\ldots,p_\ell$ is finite. 
Possibly decreasing $\delta$, we may also assume \beq\label{di2}S_\delta(p_i)\subset \Omega,\qquad |p_i-p_r|>2\delta\;\; \quad\forall i,r=1,\ldots, \ell,\,i\neq r.\eeq where $S_\delta(p_i)$ denotes the circle in $\R^2$  with center $p_i$ and radius $\delta$. 
According to assumption \eqref{ineq} we may split  $N=N_1+N_2+\ldots+ N_\ell$ with $N_i\in\N$ satisfying \beq\label{crik}0\leq N_i\leq 1+[\alpha_i]^{-}\quad \forall i.\eeq  Next we split $\{1,\ldots, N\}=I_1\cup \ldots\cup I_\ell$ where 
$$\begin{aligned}&I_1=\{1,2,\ldots, N_1\}, \\ &I_2=\{N_1+1,N_1+2,\ldots, N_1+N_2\},\\ &\ldots\\ &
I_i=\{N_1+\ldots +N_{i-1}+1,\ldots, N_1+\ldots+N_i\},\\ &\ldots
\\ &I_\ell=\{N_1+\ldots +N_{\ell-1}+1,\ldots, N\}.\end{aligned}$$
Then we  set $$\sigma_j:=S_\delta(p_i),\qquad  {\cal P}_j(\xi)=p_i+\delta\frac{\xi-p_i}{|\xi-p_i|} \;\;\;\forall j\in I_i,\; i=1,\ldots, \ell.$$
Now we fix $N$-tuple
$$\bbm[\xi]_0=(\xi_1^0,\ldots, \xi_N^0)$$  by
\beq\label{defxi0}\xi_j^0=p_i+\delta e^{{\rm i}(\theta_{i}+j\frac{\delta}{N})}\in\sigma_j\;\; \;\;\;\forall j\in I_i,\; i=1,\ldots, \ell.\eeq
Clearly ${\cal P}_j:\Omega\setminus \{p_i\}\to \sigma_j$ defines a retraction onto $\sigma_j$. Then  Lemma \ref{inters} applies with this choice of $\xi_j^0, \sigma_j,{\cal P}_j$ and gives that for any $\gamma\in {\cal F}$ there exists $\bbm[\xi]^*_\gamma\in {\cal B}$ such that, setting $z_j= \gamma_j(\bbm[\xi]^*_\gamma)$, 
$${\cal P}_j(z_j)= \xi^0_j\quad\forall j=1,\ldots, N,$$
which implies $$
\frac{z_j-p_i}{|z_j-p_i|}=e^{{\rm i}(\theta_{i}+j\frac{\delta}{N})}\;\;\;\forall j\in I_i,\;i=1,\ldots, \ell.$$

Let us observe that in this case the fibers of $\mathcal P_j$ are half-lines emanating from $p_i$ and the assumption \eqref{ineq}
is required to get a control on the energy when two or more  components of $\bbm[z]=(z_1,\ldots, z_N)$  collapse onto $p_i$, which represents a crucial point to  establish	the uniform   	boundedness	from above of $\Psi^*_M$. 
Indeed, by construction we obtain
$$|z_j-z_k|\geq |z_j-p_i|\sin \frac{\delta}{2N}\quad \forall j,k\in I_i,\;j\neq k\;\;\;(i=1,\ldots, \ell).$$
Moreover, for any $j\in I_i$  we have that $\xi_j$ belongs to the cone \eqref{di1}. This  implies that $$| z_j-z_k|\geq \mu\quad \forall j\in I_i,\, k\in I_r,\;i\neq r$$ where the value  $\mu$ depends only  on the choice of the angles $\theta_i$ and the number $\delta$.  Combining these facts with \eqref{0948} 
  we may  estimate
\beq\label{stimm}\begin{aligned}\Psi(\bbm[z])&
\leq- \sum_{i=1}^\ell\frac{\alpha_{i}}{2\pi} \sum_{j=1}^N\log\frac{1}{|z_j-p_i|}+\frac{1}{2\pi}\sum_{j,k=1\atop j\neq k}^N\log\frac{1}{|z_j-z_k|}+C\\ &\leq -
\frac{1}{2\pi}\sum_{i=1}^\ell\alpha_{i} \sum_{j=1}^N\log\frac{1}{|z_j-p_i|}+\frac{1}{2\pi}\sum_{i=1}^\ell\sum_{j,k\in I_i\atop j\neq k}\log\frac{1}{|z_j-z_k|} +C. 
\end{aligned}
\eeq
For a fixed $i\in\{1,\ldots, \ell\}$ and $j\in I_i$ we have
$$\begin{aligned}&-\alpha_{i}\log\frac{1}{|z_j-p_i|}+\sum_{k\in I_i\atop k\neq j}\log\frac{1}{|z_j-z_k|}\\ &\leq -\alpha_{i}\log\frac{1}{|z_j-p_i|}+(N_i-1)\log\frac{1}{|z_j-p_i|}-(N_i-1)\log \sin\frac{\delta}{2N}. 
\end{aligned}$$
Since $\alpha_{i}> N_i-1$ by \eqref{crik}, the above quantity is uniformly bounded above. 
Combining this with \eqref{stimm} we deduce a uniform upper bound for $\Psi$ on the range of $\gamma$:
$$\min_{\hbox{\scriptsize$\bbm[\xi]$}\in
{\cal B}}\Psi(\gamma(\bbm[\xi]))\leq \Psi(\gamma(\bbm[\xi]^*_\gamma))=\Psi(\bbm[z])\leq C$$
with the constant $C$ independent of $\gamma.$ By taking the supremum for all the maps $\gamma\in {\cal F}$ we obtain the thesis.

\end{proof}

Then taking into account of Proposition \ref{loweresti} and \ref{loweresti2} the max-min inequality (P2) will follow once we have proved the next result. 

\begin{proposition}\label{butta} The following holds:\begin{equation}\label{rocco}\min_{\hbox{\scriptsize$\bbm[\xi]$}\in {\cal B}_0}\Psi(\bbm[\xi])=\min\left\{\Psi(\bbm[\xi])\,\Big|\,\bbm[\xi]\in {\cal B},\, \min_{j\neq k}d_g(\xi_j,\xi_k)=M^{-1}\right\} \to +\infty \hbox{ as }M\to +\infty.\end{equation}\end{proposition}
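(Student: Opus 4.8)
The plan is to show that on ${\cal B}_0$ the functional $\Psi$ stays above $\frac{1}{\pi}\log M$ up to an additive constant independent of $M$; letting $M\to+\infty$ then gives \eqref{rocco}.

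First I would isolate the only term of $\Psi$ that can be unbounded. Every $\bbm[\xi]\in{\cal B}_0\subset{\cal B}$ has each component $\xi_j$ lying on the fixed curve $\sigma_j\subset\Sigma^+\setminus\{p_1,\dots,p_\ell\}$ selected in \eqref{pap}, so the $\xi_j$'s range over a compact subset of $\Sigma^+$ that is independent of $M$ and stays at positive distance both from $\partial\Sigma^+$ and from the singular sources $p_i$. Hence, exactly as already recorded in \eqref{iuni}, the terms ${\cal H}(\bbm[\xi])$, $\frac{1}{4\pi}\sum_j\log K(\xi_j)$ and $\sum_{i=1}^\ell\alpha_i\sum_j G(\xi_j,p_i)$ are all $O(1)$, uniformly in $M$ and in $\bbm[\xi]\in{\cal B}_0$. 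Consequently $\Psi(\bbm[\xi])=\sum_{j\neq k}G(\xi_j,\xi_k)+O(1)$ on ${\cal B}_0$, and the whole matter reduces to bounding the interaction sum from below.

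Then I would estimate that sum via the decomposition \eqref{0948}, $G(\xi_j,\xi_k)=\frac{1}{2\pi}\log\frac{1}{d_g(\xi_j,\xi_k)}+h(\xi_j,\xi_k)$ with $h\in C^1(\Sigma^2)$ bounded. Since $d_g(\xi_j,\xi_k)\leq\mathrm{diam}_g(\Sigma)$ and $h$ is bounded, every ordered pair satisfies $G(\xi_j,\xi_k)\geq-C_0$ with $C_0$ independent of $M$ and of the pair. On the other hand, by the definition of ${\cal B}_0$ there is a pair of indices $j_0\neq k_0$ with $d_g(\xi_{j_0},\xi_{k_0})=M^{-1}$, and by symmetry of $G$ the two ordered pairs $(j_0,k_0)$ and $(k_0,j_0)$ together contribute $2G(\xi_{j_0},\xi_{k_0})=\frac{1}{\pi}\log M+2h(\xi_{j_0},\xi_{k_0})\geq\frac{1}{\pi}\log M-C_0$. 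Bounding the remaining $N^2-N-2$ ordered pairs below by $-C_0$ each yields $\sum_{j\neq k}G(\xi_j,\xi_k)\geq\frac{1}{\pi}\log M-(N^2-N)C_0$, hence $\Psi(\bbm[\xi])\geq\frac{1}{\pi}\log M-C$ on ${\cal B}_0$; taking the minimum over ${\cal B}_0$ and sending $M\to+\infty$ concludes.

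I do not expect a genuine obstacle: this proposition is simply the ``coercivity as two concentration points collide'' counterpart of the max--min upper bounds of Propositions \ref{loweresti}--\ref{loweresti2}, and the one structural fact that makes it work is that the only unbounded contribution comes from a colliding pair and carries the favorable sign $+\frac{1}{2\pi}\log\frac{1}{d_g}$. The only point requiring a line of care is checking that all the $O(1)$ quantities are truly uniform in $M$; this is automatic, since the curves $\sigma_1,\dots,\sigma_N$ and the set $\{p_1,\dots,p_\ell\}$ are fixed before $M$ is chosen, so the compact region containing the $\xi_j$'s does not move with $M$.
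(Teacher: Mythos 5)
Your argument is correct and follows essentially the same route as the paper's proof: both isolate the interaction term $\sum_{j\neq k}G(\xi_j,\xi_k)$ via \eqref{iuni} and the decomposition \eqref{0948}, and exploit that every $\log\frac{1}{d_g(\xi_j,\xi_k)}$ is bounded below (distances bounded by the diameter) while the pair realizing the minimal distance $M^{-1}$ contributes $\frac{1}{\pi}\log M$. The paper phrases this with sequences along which $\min_{j\neq k}d_g(\xi_j,\xi_k)\to 0$ rather than with your explicit quantitative lower bound, but the content is identical.
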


\begin{proof}
Let $\bbm[\xi]_n=(\xi_1^n,\ldots,\xi_N^n)\in {\cal B}$ be such that $\min_{j\neq k}d_g(\xi_j^n,\xi_k^n) \to 0$ as $n\to +\infty$. Possibly passing to a subsequence, we may assume 
\begin{equation}\label{cover}d_g(\xi_{j_0}^n,\xi_{k_0}^n) \to 0\hbox{ as }n\to +\infty\end{equation} for some  $j_0\neq k_0$. So, by using \eqref{iuni},  we may estimate
$$\Psi(\bbm[\xi]_n)=\frac{1}{2\pi}\sum_{j,k=1\atop j\neq k}^N\log\frac{1}{d_g(\xi_{j}^n,\xi_{k}^n)}+O(1) \geq \frac{1}{\pi}\log\frac{1}{d_g(\xi_{j_0}^n,\xi_{k_0}^n)}+O(1)\to +\infty.$$
\end{proof}

Hence, the  proof of property (P3) carried out in the next section allows us to conclude the proof of Proposition \ref{prop:ptocrit1} and Proposition \ref{prop:ptocrit2}. 

\subsection{Proof of (P3)}\label{section:P3}

We shall show that the compactness property (P3)  holds provided that $M$ is sufficiently large and  assumptions (H1), (H2), (H3), (H4), \eqref{acca5}-\eqref{acca55} hold. 

By Proposition \ref{loweresti}  and Proposition \ref{loweresti2} we get $\Psi^*=\Psi_M^*=O(1)$ as $M\to +\infty$. Then  (P3) will follow once we have proved the assertion of tangential derivative being non-zero over the boundary of ${\cal D}$ for uniformly bounded values of $\Psi$ provided that $M$ is large enough. We point out that  we will follow some argument of \cite{tea&pp}, where an analogous  compactness property is  proved  for positive $K$; however, unlike \cite{tea&pp}, here we have also to rule out  the possibility that some critical point occurs on the boundary $\partial \Sigma^+$ and this is a delicate situation  that needs to be handled carefully. 

We proceed by contradiction: assume that there exist $\bbm[\xi]_n=(\xi_1^n, \dots, \xi_N^n)\in {\cal M}^+$ and $(\beta^n_1, \beta_2^n)\neq (0,0)$ such that 
\beq\label{boupsi}
 \Phi(\bbm[\xi]_n)\to -\infty, \qquad
\Psi(\bbm[\xi]_n)=O(1),\eeq
  \beq\label{exp}\beta_1^n\nabla \Psi(\bbm[\xi]_n)+\beta_2^n\nabla \Phi(\bbm[\xi]_n)=0.\eeq Last expression can be read as $\nabla \Psi(\bbm[\xi]_n)$ and $\nabla \Phi(\bbm[\xi]_n)$ are linearly dependent. Observe that, according to the Lagrange multiplier Theorem,  this contradicts either the smoothness of $\partial {\cal D}$ or the nondegeneracy of $\nabla \Phi(\bbm[\xi]_n)$ on the tangent space at the level $\Psi^*$.
Without loss of generality we may assume \beq\label{ops}(\beta_1^n)^2+(\beta_2^n)^2=1\;\hbox{ and } \;\beta_1^n+\beta_2^n\geq 0.\eeq 
Observe that by \eqref{boupsi} $$2\sum_{j,k=1\atop j\neq k}^NG(\xi_j^n, \xi_k^n)=\Psi(\bbm[\xi]_n)-\Phi(\bbm[\xi]_n)\to +\infty,$$
which implies \beq\label{assoassu}\min_{j\neq k}d_g(\xi_j^n,\xi_k^n)=o(1).\eeq
Identity \eqref{exp} can be rewritten as 
\beq\label{leone}\frac{\beta_1^n+\beta_2^n}{4\pi}\frac{\nabla K(\xi_j)}{K(\xi_j)}
-(\beta_1^n+\beta_2^n)\sum_{i=1}^\ell\alpha_i\nabla_{\xi_j} G(\xi_j^n,p_i)+2(\beta_1^n-\beta_2^n)\sum_{k=1\atop k\neq j}^N\nabla_{\xi_j} G(\xi_j^n,\xi_k^n)=O(1)\quad \forall j.\eeq
The object of the remaining part of the section is to expand the left hand side of \eqref{exp} and to prove that the leading term is not zero, so that the contradiction arises. 
Before going on we fix some notation. For every $\xi\in\Sigma$ we introduce  normal coordinates $y_\xi$ from a neighborhood of $\xi$  onto $B_{r_0}(0)$ (the choice of $r_0$ is independent of $\xi$) which depend smoothly on $\xi \in \Sigma$.
Since $y_\xi(\xi)=0$ and $d_g(x,\xi)=|y_\xi(x)|$ for all $x \in y_\xi^{-1}(B_{r_0}(0))$, we have that
$$d_g(\xi_1,\xi_2)=|y_\xi(\xi_1)-y_\xi(\xi_2)|(1+o(1))$$ and 
\begin{equation} \label{1538}
\nabla_{\xi_1} \log d_g(\xi_1,\xi_2)= \frac{y_{\xi_2}(\xi_1)}{|y_{\xi_2}(\xi_1)|^2}=
\frac{y_\xi(\xi_1)-y_\xi(\xi_2)+o(|y_\xi(\xi_1)-y_\xi(\xi_2)|)}{|y_\xi(\xi_1)-y_\xi(\xi_2)|^2}
\end{equation}
as $\xi_1,\xi_2 \to \xi$. 

Hereafter we might pass to subsequences without further notice.

Let us split 
$\{1,\ldots,N\}=\tilde Z\cup Z_0\cup Z_1\cup\ldots \cup Z_\ell$ where $$\tilde Z=\{j\,|\, d_g(\xi_{j}^n, \partial \Sigma^+)\geq c\;\; \&\;\;d_g(\xi_j^n,p_i)\geq c \hbox{ for all }i\},$$
$$Z_0=\{j\,|\,d_g(\xi_{j}^n, \partial \Sigma^+)\to 0 \},\quad Z_i=\{j\,|\,\xi_j^n\to p_i\},\quad  i=1,\ldots,\ell.$$ 
We begin with the following three lemmas.
\begin{lemma}\label{step0}  $d_g(\xi_{j}^n,  \xi_k^n)\geq c $ for all $j,k\in Z_0$, $ j\neq k$. 
\end{lemma}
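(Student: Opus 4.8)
The plan is to argue by contradiction: assuming two points of $Z_0$ collide, I would use the linear dependence \eqref{leone} to deduce an impossible force balance for a suitably rescaled cluster of points sitting on $\partial\Sigma^+$. Suppose $j_0\neq k_0$ lie in $Z_0$ with $d_g(\xi_{j_0}^n,\xi_{k_0}^n)\to0$. Since $j_0,k_0\in Z_0$ both points approach $\partial\Sigma^+$, and being mutually infinitesimally close they converge, along a subsequence, to a common $q\in\partial\Sigma^+$. I would set $J:=\{\,j\ :\ \xi_j^n\to q\,\}\supseteq\{j_0,k_0\}$, so $|J|\ge2$, $J\subseteq Z_0$, and — by (H4), which gives $q\notin\{p_1,\dots,p_m\}$ — there is $c>0$ with $d_g(\xi_j^n,p_i)\ge c$ for all $j\in J$, all $i$, and $d_g(\xi_j^n,\xi_k^n)\ge c$ when $j\in J$, $k\notin J$ (in particular a point of $Z_0$ can collapse only onto another point of $Z_0$, so it suffices to exclude $|J|\ge2$). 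Next I would pass to Fermi (boundary–normal) coordinates $(s,t)$ centred at $q$, $t>0$ measuring the distance to $\partial\Sigma^+$ inside $\Sigma^+$, and write $\zeta_j^n=(s_j^n,t_j^n)\to0$. By (H3), $\nabla K(q)\neq0$, so $K=a(s)\,t\,(1+O(t))$ with $a\in C^1$, $a>0$; hence $\log K(\xi_j^n)\to-\infty$ and, crucially, $\nabla\log K(\xi_j^n)=\big(O(1),\ 1/t_j^n+O(1)\big)$ — only the inward normal component blows up. Inserting the splitting $\{1,\dots,N\}=J\sqcup J^c$ into \eqref{leone} for $j\in J$, the $p_i$–terms and the terms $\nabla_{\xi_j}G(\xi_j^n,\xi_k^n)$ with $k\notin J$ are $O(1)$ by (H4) and \eqref{0948}, the smooth part $\mathcal H$ is $O(1)$, and with $\varepsilon_n:=\beta_1^n-\beta_2^n$ and \eqref{1538} I am reduced to
\[
\frac{\beta_1^n+\beta_2^n}{4\pi}\Big(0,\ \tfrac{1}{t_j^n}\Big)\ -\ \frac{\varepsilon_n}{\pi}\sum_{k\in J\setminus\{j\}}\frac{\zeta_j^n-\zeta_k^n+o(|\zeta_j^n-\zeta_k^n|)}{|\zeta_j^n-\zeta_k^n|^2}\ =\ O(1),\qquad j\in J .
\]
If $\varepsilon_n=0$ along a subsequence, then $\beta_1^n+\beta_2^n=\sqrt2$ (by $(\beta_1^n)^2+(\beta_2^n)^2=1$ and $\beta_1^n+\beta_2^n\ge0$) and the relation forces $1/t_j^n=O(1)$, impossible; so I may assume $\varepsilon_n\neq0$.

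The heart of the argument is then a blow–up at the cluster scale. I would set $\lambda_n:=\min_{j\neq k\in J}|\zeta_j^n-\zeta_k^n|\to0$, attained by a pair relabelled $\{j_0,k_0\}$. Since $\lambda_n$ is the \emph{minimal} pairwise distance, $\big|\sum_{k\in J\setminus\{j\}}\nabla_{\xi_j}G(\xi_j^n,\xi_k^n)\big|=O(\lambda_n^{-1})$; feeding this into the normal component of the displayed relation and using $(\beta_1^n+\beta_2^n)^2+\varepsilon_n^2=2$ gives $\lambda_n/|\varepsilon_n|\to0$. I would then rescale $\hat\zeta_j^n:=(\zeta_j^n-\zeta_{j_0}^n)/\lambda_n$, so $|\hat\zeta_j^n-\hat\zeta_k^n|\ge1$ for $j\neq k\in J$ and $|\hat\zeta_{k_0}^n|=1$; restricting to the set $J'\subseteq J$ of indices for which $\hat\zeta_j^n$ stays bounded ($j_0,k_0\in J'$, so $|J'|\ge2$) and extracting limits $\hat\zeta_j^n\to\hat\zeta_j$ produces \emph{pairwise distinct} points $\{\hat\zeta_j\}_{j\in J'}$ (mutual distances $\ge1$). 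Multiplying the displayed relation by $\lambda_n/|\varepsilon_n|$, letting $n\to\infty$, and noting that the contributions of the $k\in J\setminus J'$ vanish, I obtain for every $j\in J'$
\[
c_j\,\mathbf e\ =\ \pm\sum_{k\in J'\setminus\{j\}}\frac{\hat\zeta_j-\hat\zeta_k}{|\hat\zeta_j-\hat\zeta_k|^2},\qquad c_j\ge0,
\]
with $\mathbf e$ the inward unit normal at $q$ and the $\pm$ the eventual sign of $\varepsilon_n$. Summing over $j\in J'$ and using the antisymmetry $\sum_{j\neq k\in J'}\frac{\hat\zeta_j-\hat\zeta_k}{|\hat\zeta_j-\hat\zeta_k|^2}=0$ forces $\sum_j c_j=0$, hence $c_j\equiv0$; then taking the scalar product with $\hat\zeta_j$ and summing yields $0=\pm\tfrac12|J'|(|J'|-1)$, contradicting $|J'|\ge2$. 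Thus no such pair $j_0,k_0$ exists, which is the assertion of the Lemma.

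The step I expect to be the main obstacle is the bookkeeping of the several competing scales — the minimal pairwise distance $\lambda_n$, the boundary distances $t_j^n$, and the cluster diameter — which need not be comparable, together with the control, after dividing by $|\zeta_j^n-\zeta_k^n|^2$ and rescaling, of the $o(|\zeta_j^n-\zeta_k^n|)$ errors in \eqref{1538} and of the metric corrections in the Fermi chart; choosing $\lambda_n$ to be the \emph{minimal} pairwise distance (so that the rescaled configuration cannot sub‑cluster) and passing to the bounded part $J'$ is precisely what keeps the limiting balance clean, and the uniform sign of $\nabla\log K$ in the normal direction, coming from (H3), is what prevents the rescaled "inward push" $c_j\mathbf e$ from being balanced against the pairwise repulsion. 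The genuinely new feature with respect to the positive–$K$ constructions of \cite{tea&pp,dap,DKM}, on which the rest of the proof of (P3) is patterned, is exactly this need to rule out concentration \emph{at} the boundary $\partial\Sigma^+$, which is where hypotheses (H3)–(H4) enter.
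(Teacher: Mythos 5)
Your argument is correct, and at its core it runs on the same two mechanisms as the paper's proof: after isolating a sub-cluster whose mutual distances are all comparable to the minimal one, you test the Lagrange relation \eqref{leone} against the (nearly constant) inward normal direction, where antisymmetry kills the pairwise interaction, and against the positions, where the Pohozaev-type identity \eqref{ido} produces the nonzero count $\#I(\#I-1)$. This is exactly what the paper does by pairing \eqref{39} first with $y_{\xi_{j_0}^n}(\xi_j^n)$ and then with $\nu_j^n=\nabla K(\xi_j^n)/|\nabla K(\xi_j^n)|$, using $\nu_j^n-\nu_k^n=O(d_g(\xi_j^n,\xi_k^n))$ in place of your ``sum over $j$ and use antisymmetry''. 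The difference is the packaging. The paper stays at finite $n$: the two testings yield the coupled estimates \eqref{beta120} and \eqref{beta122}, in which the ratio $d_g(\xi_{j_0}^n,\xi_{k_0}^n)/d_g(\xi_{j_0}^n,\partial\Sigma^+)$ and its reciprocal cancel upon substitution, giving $\beta_1^n\pm\beta_2^n\to 0$ directly against \eqref{ops}; this avoids your case analysis on $\varepsilon_n$, the intermediate claim $\lambda_n/|\varepsilon_n|\to 0$, and any rescaled limit. Your version blows up at the minimal cluster scale and reads the contradiction off the limiting point-vortex balance $c_j\,\mathbf e=\pm\sum_k(\hat\zeta_j-\hat\zeta_k)/|\hat\zeta_j-\hat\zeta_k|^2$, which is geometrically more transparent but costs the extra scale bookkeeping you flag; note that your $J'$ (bounded rescaled positions) is precisely the paper's sub-cluster $I$ selected via \eqref{oss0}, and that the paper's normalization \eqref{oss} (taking $\xi_{j_0}^n$ closest to $\partial\Sigma^+$ within $I$) is what replaces your allowance of $j$-dependent limits $c_j$. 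Two small points worth making explicit in your write-up: the coefficients $c_j^n=(\beta_1^n+\beta_2^n)\lambda_n/(4\pi|\varepsilon_n|t_j^n)$ are bounded because the interaction side of the rescaled relation is bounded (only then may you extract $c_j\ge 0$), and the sign information $\beta_1^n+\beta_2^n\ge 0$ from \eqref{ops} is essential for deducing $c_j\equiv 0$ from $\sum_j c_j=0$. With those noted, I see no gap.
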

\begin{proof} Suppose by contradiction 
that there exists a point $\xi_0\in \partial\Sigma^+$ which is the limit of more than one sequence $\xi_j^n$; then define  the subset $Y_0\subset Z_0$ corresponding to such sequences:
$$Y_0:=\{j\,|\, \xi_j^n\to \xi_0\},\qquad d_g(\xi_j^n,\xi_0)\geq c\quad \forall j\in Z_0\setminus Y_0.$$ 
Let us  choose two indices $j_0, k_0\in Y_0$, $j_0\neq k_0$ in such a way that we may split $Y_0=I\cup (Y_0\setminus I)$ with\footnote{Here we use the notation $\sim$ to denote sequences which in the limit $n\to +\infty$ are of the same order. } 
$$j_0, k_0\in I,\;\;\;d_g(\xi_j^n, \xi_k^n)\sim d_g(\xi_{j_0}^n,\xi_{k_0}^n) \quad \forall j,k\in I,\, j\neq k.$$
 and \beq\label{oss0}d_g(\xi_{j_0}^n,\xi_{k_0}^n)=o(d_g(\xi_{j},\xi_{k}))\quad \forall j\in I, \forall k\in Y_0\setminus I.\eeq
Moreover, without loss of generality we may assume 
\beq\label{oss}d_g(\xi_j^n,  \partial \Sigma^+)\geq  d_g(\xi_{j_0}^n, \partial \Sigma^+)\quad \forall j\in I.\eeq
By \eqref{oss0} we have $$\nabla_{\xi_j}G(\xi_j^n, \xi_k^n)=o\bigg(\frac{1}{d_g(\xi_{j_0}^n,\xi_{k_0}^n)}\bigg)\quad \forall j\in I,\, k\in Y_0\setminus I.$$ Recalling assumption (H2)-(H3), by \eqref{oss} we derive 
\beq\label{ossa}\frac{|\nabla K(\xi_{j}^n)|}{K(\xi_{j}^n)}\sim\frac{1}{d_g(\xi_{j}^n,\partial\Sigma^+)}\leq C\frac{1}{d_g(\xi_{j_0}^n,\partial\Sigma^+)}\quad \forall j\in I.\eeq
Using the local chart 
 $y_{\xi_{j_0}^n}$, and recalling \eqref{1538}, the identities \eqref{leone} give
\beq\label{39}\begin{aligned}\frac{\beta_1^n+\beta_2^n}{4\pi}&\frac{\nabla K(\xi_{j}^n)}{K(\xi_{j}^n)}+2(\beta_1^n-\beta_2^n)\sum_{k\in I\atop k\neq j}\frac{y_{\xi_{j_0}^n}(\xi_j^n)-y_{\xi_{j_0}^n}(\xi_k^n)}{|y_{\xi_{j_0}^n}(\xi_j^n)-y_{\xi_{j_0}^n}(\xi_k^n)|^2}=o\bigg(\frac{\beta_1^n-\beta_2^n}{d_g(\xi_{j_0}^n,\xi_{k_0}^n)}\bigg)+O(1)\qquad \forall j\in I.\end{aligned}\eeq
Let us multiply the above identity by $y_{\xi_{j_0}^n}(\xi_j^n)$ and next sum in $j\in I$; 
taking into account  the following general  relation
\beq\label{ido}\sum_{j,k\in I\atop j\neq k} \frac{(z_j-z_k)(z_j-z)}{|z_j-z_k|^2}=\sum_{j,k\in I\atop j< k}1=\frac{\#I(\#I-1)}{2}\qquad \forall z_j, z\in\R^2\eeq
we obtain
$$
\frac{\beta_1^n+\beta_2^n}{4\pi}\sum_{j\in I}\frac{\nabla K(\xi_{j}^n)}{K(\xi_{j}^n)}y_{\xi_{j_0}^n}(\xi_j^n)+(\beta_1^n-\beta_2^n)\#I(\#I-1)=
o(\beta_1^n-\beta_2^n)+O(d_g(\xi_{j_0}^n,\xi_{k_0}^n))$$ 
by which, recalling that $\# I\geq 2$ (since $j_0, k_0\in I$) and using \eqref{ossa},
 we get \beq\label{beta120}\beta_1^n-\beta_2^n=(\beta_1^n+\beta_2^n)O\Big(\frac{d_g(\xi_{j_0}^n,\xi_{k_0}^n)}{d_g(\xi_{j_0}^n,\partial\Sigma^+)}\Big)+O(d_g(\xi_{j_0}^n,\xi_{k_0}^n)).\eeq 
Next let us multiply \eqref{39}   by $\nu_j^n:= \frac{\nabla K(\xi_{j}^n) }{| \nabla K(\xi_{j}^n)|}
$ and  sum in $j\in I$:
\beq\label{sche}\begin{aligned}\frac{\beta_1^n+\beta_2^n}{4\pi}&\sum_{j\in I}\frac{|\nabla K(\xi_{j}^n) |}{  K(\xi_{j}^n)}+2(\beta_1^n-\beta_2^n)\sum_{j, k\in I\atop k\neq j}\frac{y_{\xi_{j_0}^n}(\xi_j^n)-y_{\xi_{j_0}^n}(\xi_k^n)}{|y_{\xi_{j_0}^n}(\xi_j^n)-y_{\xi_{j_0}^n}(\xi_k^n)|^2} \nu_j^n=o\bigg(\frac{\beta_1^n-\beta_2^n}{d_g(\xi_{j_0}^n,\xi_{k_0}^n)}\bigg)+O(1).\end{aligned}\eeq
Since   $K$ is of class $C^{2}(\Sigma)$ according to assumption (H2), we deduce
$$\nu_j^n- \nu_k^n=O(d_g(\xi_{j}^n,\xi_{k}^n))$$ by which we can write
$$\sum_{j, k\in I\atop k\neq j}\frac{y_{\xi_{j_0}^n}(\xi_j^n)-y_{\xi_{j_0}^n}(\xi_k^n)}{|y_{\xi_{j_0}^n}(\xi_j^n)-y_{\xi_{j_0}^n}(\xi_k^n)|^2} \nu_j^n=\sum_{j, k\in I\atop j<k}\frac{y_{\xi_{j_0}^n}(\xi_j^n)-y_{\xi_{j_0}^n}(\xi_k^n)}{|y_{\xi_{j_0}^n}(\xi_j^n)-y_{\xi_{j_0}^n}(\xi_k^n)|^2} (\nu_j^n-\nu_k^n)=O(1).$$ 
By inserting the above estimate  into \eqref{sche}   and recalling \eqref{ossa} we arrive at  
$$\frac{\beta_1^n+\beta_2^n}{4\pi}\frac{1}{d_g(\xi_{j_0}^n,\partial\Sigma^+)}=
o\bigg(\frac{\beta_1^n-\beta_2^n}{d_g(\xi_{j_0}^n,\xi_{k_0}^n)}\bigg)+
O(1)$$ 
or, equivalently, \beq\label{beta122}\beta_1^n+\beta_2^n=(\beta_1^n-\beta_2^n)o\bigg(\frac{d_g(\xi_{j_0}^n,\partial\Sigma^+)}{d_g(\xi_{j_0}^n,\xi_{k_0}^n)}\bigg)+
O(d_g(\xi_{j_0}^n,\partial\Sigma^+)).\eeq Combining \eqref{beta120} and \eqref{beta122} we get $\beta_1^n-\beta_2^n=o(1), \beta_1^n+\beta_2^n=o(1)$, in contradiction with \eqref{ops}.

\end{proof}
\begin{lemma}\label{step1}
The following holds:
\begin{enumerate} 
 \item[a)] if $\#Z_i=1$ for some $i=1,\ldots,\ell$, then $\beta_1^n+\beta_2^n\to 0$;
 \item[b)] if $Z_0\neq\emptyset$,  then $\beta_1^n+\beta_2^n \to 0$;
\item[c)]  if $d_g(\xi_j^n,\xi_k^n)=o(1)$ for some $j,k\in \tilde Z$, $j\neq k$, then $\beta_1^n-\beta_2^n\to 0;$
\item[d)]  there exists $i\in\{1,\ldots,\ell\}$ such that $\#Z_i\geq 2. $\end{enumerate}
\end{lemma}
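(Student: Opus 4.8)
The plan is to prove the four assertions of Lemma \ref{step1} by exploiting identity \eqref{leone} (equivalently \eqref{exp}), testing it against suitable vectors in the normal coordinate charts, and reading off constraints on the pair $(\beta_1^n,\beta_2^n)$ from the divergent terms, in the spirit of the proof of Lemma \ref{step0}. Throughout one keeps in mind that by \eqref{ops} the pair is normalized, $(\beta_1^n)^2+(\beta_2^n)^2=1$ with $\beta_1^n+\beta_2^n\geq0$, so that $(\beta_1^n+\beta_2^n)\to0$ and $(\beta_1^n-\beta_2^n)\to0$ cannot both occur. The recurring mechanism is: isolate the most singular term in \eqref{leone}, pair the identity with the direction that makes that term blow up with a definite sign, and conclude that the coefficient in front must vanish in the limit.

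For part (a), if $\#Z_i=1$, say $Z_i=\{j\}$, then $\xi_j^n\to p_i$ while all the other points stay at distance $\geq c$ from $\xi_j^n$ (otherwise $\#Z_i\geq2$ or the point would be near another $p_r$ or near $\partial\Sigma^+$, contradicting $j\in Z_i$ alone). Hence the terms $\sum_{k\neq j}\nabla_{\xi_j}G(\xi_j^n,\xi_k^n)$ in \eqref{leone} are $O(1)$; the term $\nabla K(\xi_j^n)/K(\xi_j^n)$ is $O(1)$ since $K$ is $C^2$ and positive near $p_i$; whereas $\sum_{r=1}^\ell\alpha_r\nabla_{\xi_j}G(\xi_j^n,p_r)$ is dominated by $-\alpha_i\nabla_{\xi_j}\log d_g(\xi_j^n,p_i)$, which by \eqref{1538} is $\sim \alpha_i\,y_{p_i}(\xi_j^n)/|y_{p_i}(\xi_j^n)|^2$, blowing up like $1/d_g(\xi_j^n,p_i)$. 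Since $\alpha_i\neq0$ (in fact $\alpha_i\neq0,1,\ldots,N-1$ by \eqref{acca5}--\eqref{acca55}, so in particular $\alpha_i\neq0$), pairing \eqref{leone} for this $j$ with the unit vector $y_{p_i}(\xi_j^n)/|y_{p_i}(\xi_j^n)|$ forces $(\beta_1^n+\beta_2^n)/d_g(\xi_j^n,p_i)=O(1)$, hence $\beta_1^n+\beta_2^n\to0$.

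For part (b), suppose $j\in Z_0$, i.e. $d_g(\xi_j^n,\partial\Sigma^+)\to0$. By Lemma \ref{step0} all points of $Z_0$ are mutually at distance $\geq c$, and $\xi_j^n$ stays away from the $p_r$'s, so in \eqref{leone} the Green-function sums are $O(1)$, while by (H3) the term $\nabla K(\xi_j^n)/K(\xi_j^n)$ has modulus $\sim 1/d_g(\xi_j^n,\partial\Sigma^+)\to+\infty$ (since $K$ vanishes on $\partial\Sigma^+$ with nonvanishing gradient there, $K(\xi_j^n)\sim d_g(\xi_j^n,\partial\Sigma^+)$ and $|\nabla K(\xi_j^n)|\sim1$). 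Pairing \eqref{leone} with $\nabla K(\xi_j^n)/|\nabla K(\xi_j^n)|$ isolates $(\beta_1^n+\beta_2^n)|\nabla K(\xi_j^n)|/(4\pi K(\xi_j^n))=O(1)$, hence $\beta_1^n+\beta_2^n\to0$. For part (c), if $d_g(\xi_j^n,\xi_k^n)=o(1)$ with $j,k\in\tilde Z$, one argues exactly as in the proof of Lemma \ref{step0}: select a maximal cluster $I$ with $j,k\in I$ of comparable mutual distances, work in the chart $y_{\xi_{j_0}^n}$, multiply \eqref{leone} by $y_{\xi_{j_0}^n}(\xi_j^n)$, sum over $j\in I$ and use the algebraic identity \eqref{ido}; since points of $\tilde Z$ are uniformly away from $\partial\Sigma^+$ and from the $p_r$'s, the $\nabla K/K$ and $\nabla_{\xi_j}G(\xi_j^n,p_r)$ contributions are now genuinely $O(1)$ (no competing blow-up), so the leading balance gives $(\beta_1^n-\beta_2^n)\#I(\#I-1)=o(\beta_1^n-\beta_2^n)+O(d_g(\xi_{j_0}^n,\xi_{k_0}^n))$ with $\#I\geq2$, forcing $\beta_1^n-\beta_2^n\to0$. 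Finally, part (d) is a counting argument combining (a), (b), (c) with \eqref{assoassu}: the latter guarantees some pair of points collapses; by Lemma \ref{step0} such a collapsing pair cannot lie in $Z_0$, and by (c) it cannot lie in $\tilde Z$ (else $\beta_1^n-\beta_2^n\to0$, and then (a)/(b) applied to any nonempty $Z_0$ or any singleton $Z_i$ would also give $\beta_1^n+\beta_2^n\to0$, contradicting \eqref{ops}); so the collapsing pair lies in some $Z_i$, which means $\#Z_i\geq2$. One must also check the bookkeeping that not all $Z_i$ with collapse can be excluded — this is where the normalization \eqref{ops} is used decisively, since $\beta_1^n+\beta_2^n\to0$ and $\beta_1^n-\beta_2^n\to0$ are incompatible.

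The main obstacle is part (d) and, within it, the careful organization of the dichotomies: one has to argue that \emph{some} $Z_i$ must have at least two elements without circular reasoning, i.e. by showing that every alternative scenario (collapse only within $\tilde Z$, or $Z_0\neq\emptyset$, or only singleton $Z_i$'s among the near-singularity points) leads via (a)--(c) to the forbidden conclusion that both $\beta_1^n\pm\beta_2^n$ vanish. A secondary technical point is making the "comparable distances'' cluster selection in (c) precise and ensuring the error terms $o(\cdot)$ and $O(d_g(\xi_{j_0}^n,\xi_{k_0}^n))$ are genuinely negligible against the $O(1)$ terms; this is routine given the estimate \eqref{1538} and the separation properties guaranteed by the definitions of $\tilde Z,Z_0,Z_i$ together with Lemma \ref{step0}.
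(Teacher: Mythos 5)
Your treatments of a), b) and c) are correct and coincide with the paper's proof: in each case you isolate the unique divergent term of \eqref{leone} (the source term $\alpha_i\nabla_{\xi_j}G(\xi_j^n,p_i)$ for a), using $\alpha_i\neq 0$; the term $\nabla K/K$ via (H2)--(H3) for b); the mutual Green interactions within a comparable-distance cluster combined with the identity \eqref{ido} for c)), and the directions against which you test the identity are the ones the paper uses.

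Part d), however, contains a genuine gap. You reduce d) to excluding the case in which the collapsing pair guaranteed by \eqref{assoassu} lies in $\tilde Z$, and there you argue: c) gives $\beta_1^n-\beta_2^n\to0$, and then ``a)/b) applied to any nonempty $Z_0$ or any singleton $Z_i$'' gives $\beta_1^n+\beta_2^n\to0$, contradicting \eqref{ops}. This presupposes that $Z_0\neq\emptyset$ or that some $Z_i$ is nonempty. If instead \emph{every} index belongs to $\tilde Z$ (all points stay uniformly away from $\partial\Sigma^+$ and from all the $p_i$'s, with two of them collapsing onto each other), neither a) nor b) is applicable, and $\beta_1^n-\beta_2^n\to 0$ by itself is perfectly compatible with the normalization \eqref{ops}; so no contradiction follows from \eqref{leone} and \eqref{ops} alone, and your closing appeal to the incompatibility of $\beta_1^n\pm\beta_2^n\to0$ cannot manufacture a nonempty $Z_i$. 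The missing ingredient is a second use of \eqref{boupsi}: once c) forces $\beta_1^n-\beta_2^n\to0$, hence $\beta_1^n+\beta_2^n\geq c$, part b) forces $Z_0=\emptyset$, so that $\log K(\xi_j^n)=O(1)$ for every $j$; then $\Psi(\bbm[\xi]_n)=O(1)$ together with $\Phi(\bbm[\xi]_n)\to-\infty$ yields
$$2\sum_{i=1}^\ell\alpha_i\sum_{j=1}^N G(\xi_j^n,p_i)=-\Psi(\bbm[\xi]_n)-\Phi(\bbm[\xi]_n)+O(1)\to+\infty,$$
which forces $\min_{i,j}d_g(\xi_j^n,p_i)\to0$, i.e. some $Z_i$ is nonempty; being a singleton under the contradiction hypothesis, part a) then gives $\beta_1^n+\beta_2^n\to 0$ and the contradiction. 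Without this energy step the scenario ``all indices in $\tilde Z$'' is not ruled out, and d) does not follow.
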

\begin{proof} 
Assume that $i=1,\ldots,\ell$ is such that 
$\#Z_{i}=1,$ say $Z_{i}=\{j_0\}$. Then, using the local chart $y_{p_{i}}$,  the identities \eqref{leone} give
$$(\beta_1^n+\beta_2^n)\alpha_{i}\frac{y_{p_{i}}(\xi_{j_0}^n)}{|y_{p_{i}}(\xi_{j_0}^n)|^2}=O(1)$$
which implies $\beta_1^n+\beta_2^n=o(1)$, then $a)$ follows. 

Similarly, assume that $Z_0\neq\emptyset$ and let $j_0\in Z_0$. According to Lemma \ref{step0} we have $d_g( \xi_{j_0}^n, \xi_j)\geq c$ for all $j\neq j_0$. In this case the identity \eqref{leone} with $j=j_0$ becomes 
$$(\beta_1^n+\beta_2^n)\frac{\nabla K(\xi_{j_0}^n)}{ K(\xi_{j_0}^n)}=O(1).$$ According to assumption (H2)-(H3) we have $\frac{|\nabla K(\xi_{j_0}^n)|}{K(\xi_{j_0}^n)}\sim\frac{1}{d_g(\xi_{j_0}^n,\partial\Sigma^+)}$, and  b) is thus established.
 
Next, suppose that $j_0, k_0\in Z_0$ with $j_0\neq k_0$ are such that  $d_g(\xi_{j_0}^n,\xi_{k_0}^n)=o(1)$. We may assume
$$d_g(\xi_{j_0}^n,\xi_{k_0}^n)=\min_{j,k\in Z_0,j\neq k}d_g(\xi_j^n,\xi_k^n)\quad \forall n\in\N.$$
So we can split $Z_0=I\cup J$ where $$I=\big\{j\in Z_0\,\big|\, d_g(\xi_{j}^n,\xi_{j_0}^n)\sim d_g(\xi_{j_0}^n,\xi_{k_0}^n)\big\}\cup\{j_0\},\qquad J=\big\{j\in Z_0\,\big|\,  d_g(\xi_{j_0}^n,\xi_{k_0}^n)=o(d_g(\xi_j^n,\xi_{j_0}^n))\big\}.$$
We observe that by construction 
$$d_g(\xi_{j_0}^n,\xi_{k_0}^n)\sim d_g(\xi_j^n,\xi_k^n)\quad \forall j,k\in I,\,j\neq k$$ and
$d_g(\xi_{j_0}^n,\xi_{k_0}^n)=o(d_g(\xi_j^n,\xi_k^n))$ for all $j\in I$ and $k\in J$, by which $$\nabla_{\xi_j}G(\xi_j^n, \xi_k^n)=o\bigg(\frac{1}{d_g(\xi_{j_0}^n,\xi_{k_0}^n)}\bigg)\quad \forall j\in I,\, k\in J.$$
Then for any $j\in I$, using the local chart $y_{\xi_{j_0}^n}$ the identities \eqref{leone} give
\beq\label{qqw100}(\beta_1^n-\beta_2^n) \sum_{k\in I\atop k\neq j} \frac{y_{\xi_{j_0}^n}(\xi_{j}^n)-y_{\xi_{j_0}^n}(\xi_k^n)}{|y_{\xi_{j_0}^n}(\xi_j^n)-y_{\xi_{j_0}^n}(\xi_k^n)|^2}=o\bigg(\frac{1}{d_g(\xi_{j_0}^n,\xi_{k_0}^n)}\bigg)\quad \forall j\in I.\eeq So we multiply \eqref{qqw100} by $y_{\xi_{j_0}^n}(\xi_j^n)$ and sum in $j\in I$: by using \eqref{ido} we arrive at 
$$(\beta_1^n-\beta_2^n)\#I(\#I-1)
=o(1) .$$ Taking into account that $I$ has at least two elements, since  $j_0,\, k_0\in I$, we deduce  $\beta_1^n-\beta_2^n=o(1)$, and $\rm c)$ follows. 

Finally  assume by contradiction that  $Z_{i}$ consists of at most one index for every $i=1,\ldots,\ell$. Then, combining this with Lemma \ref{step0} and  \eqref{assoassu} we find that $d_g(\xi_j^n,\xi_k^n)=o(1)$ for some $j,k\in \tilde Z$, $j\neq k$. Then part c) gives $\beta_1^n-\beta_2^n=o(1)$; consequently $\beta_1^n+\beta_2^n\geq c$ by \eqref{ops}, and part b) implies $Z_0=\emptyset$, that is $K(\xi_j^n)=O(1)$ for all $j$. So, thanks to \eqref{boupsi} we deduce 
$$2\sum_{i=1}^\ell \alpha_i\sum_{j=1}^N  G(\xi_j, p_i)= -\Phi(\bbm[\xi]_n)-\Phi(\bbm[\xi]_n)+O(1)\to +\infty.$$
which implies $$\min_{i,j} d_g(\xi_j^n, p_i)=o(1).$$
Then   $Z_{i}$ is nonempty for some $i=1,\ldots,\ell$, so $\#Z_{i}=1$ for such $i$. 
Then, by $\rm a)$ we derive $\beta_1^n+\beta_2^n=o(1)$, and the contradiction arises.

\end{proof}

\begin{lemma} \label{step2}
If $i=1,\ldots,\ell$ is such that  $\#Z_i\geq 2$, then $d_g(\xi^n_{j},p_{i})=O(d_g(\xi^n_{j},\xi_{k}^n))$ for all  $j,k\in Z_i$, $j\neq k$. 
 \end{lemma}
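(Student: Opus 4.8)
Lemma \ref{step2} asserts that if $\#Z_i\geq 2$, then the points $\xi^n_j$ with $j\in Z_i$ all approach $p_i$ at comparable rates: $d_g(\xi^n_j,p_i)=O(d_g(\xi^n_j,\xi^n_k))$ for $j,k\in Z_i$, $j\neq k$.

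The plan is to argue by contradiction: assume that, along a subsequence, there are indices $j,k\in Z_i$ with $d_g(\xi^n_j,\xi^n_k)=o\bigl(d_g(\xi^n_j,p_i)\bigr)$, and show that this forces $\beta_1^n-\beta_2^n\to0$ and $\beta_1^n+\beta_2^n\to0$, contradicting \eqref{ops}. First I would restrict the linear–dependence relation \eqref{leone} to the indices $j\in Z_i$. Since $p_i\in\Sigma^+$, so $K(p_i)>0$, and since by (H4) $p_i\notin\partial\Sigma^+$ and $p_i\neq p_{i'}$ for $i'\neq i$, every term of \eqref{leone} other than the singular self–interaction with $p_i$ and the mutual interactions among the $\xi^n_j$ with $j\in Z_i$ is $O(1)$: indeed $\nabla\log K(\xi^n_j)=O(1)$ near $p_i$, $\nabla_{\xi_j}G(\xi^n_j,p_{i'})=O(1)$ for $i'\neq i$, and $\nabla_{\xi_j}G(\xi^n_j,\xi^n_k)=O(1)$ whenever $k\notin Z_i$ — here (H4) is essential, as it keeps the points of $Z_0$ (which approach $\partial\Sigma^+$) away from $p_i$. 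Writing $z^n_j:=y_{p_i}(\xi^n_j)$ and using $G=\frac1{2\pi}\log\frac1{d_g}+h$ together with \eqref{1538}, relation \eqref{leone} for $j\in Z_i$ becomes, up to an $O(1)$ remainder,
$$\frac{\beta_1^n+\beta_2^n}{2\pi}\,\alpha_i\,\frac{z^n_j}{|z^n_j|^2}\;-\;\frac{\beta_1^n-\beta_2^n}{\pi}\sum_{k\in Z_i\setminus\{j\}}\frac{z^n_j-z^n_k}{|z^n_j-z^n_k|^2}\,(1+o(1))\;=\;O(1),\qquad j\in Z_i.$$

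The first ingredient is a localization at the finest scale, exactly in the spirit of Lemma \ref{step1}(c). Among all pairs $(j,k)$ in $Z_i$ with $d_g(\xi^n_j,\xi^n_k)=o(d_g(\xi^n_j,p_i))$ I would pick one, $(j_0,k_0)$, of minimal distance $\lambda_n:=d_g(\xi^n_{j_0},\xi^n_{k_0})$, so that $\rho_n:=d_g(\xi^n_{j_0},p_i)\sim d_g(\xi^n_{k_0},p_i)$ and $\lambda_n=o(\rho_n)$; then (passing to a subsequence) I set $I:=\{\,l\in Z_i:\ d_g(\xi^n_l,\xi^n_{j_0})=O(\lambda_n)\,\}$. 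By minimality of $\lambda_n$ among the "bad'' pairs one checks that $j_0,k_0\in I$ (so $\#I\ge2$), that the points of $I$ stay at mutual distance $\sim\lambda_n$ and at distance $\rho_n(1+o(1))$ from $p_i$, and that $d_g(\xi^n_j,\xi^n_l)\gg\lambda_n$ for $j\in I$, $l\in Z_i\setminus I$. Working in the chart $y_{\xi^n_{j_0}}$, the self–interaction with $p_i$ is $O(1/\rho_n)=o(1/\lambda_n)$ and all terms except the interactions inside $I$ are $o(1/\lambda_n)$, so that for $j\in I$
$$(\beta_1^n-\beta_2^n)\sum_{l\in I\setminus\{j\}}\frac{y_{\xi^n_{j_0}}(\xi^n_j)-y_{\xi^n_{j_0}}(\xi^n_l)}{|y_{\xi^n_{j_0}}(\xi^n_j)-y_{\xi^n_{j_0}}(\xi^n_l)|^2}=o\!\left(\frac1{\lambda_n}\right).$$
Multiplying by $y_{\xi^n_{j_0}}(\xi^n_j)$, summing over $j\in I$ and using the identity \eqref{ido} with $z=0$ yields $(\beta_1^n-\beta_2^n)\frac{\#I(\#I-1)}{2}=o(1)$, hence $\beta_1^n-\beta_2^n\to0$ since $\#I\ge2$.

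The second ingredient is a global balance identity in the chart at $p_i$: multiply the displayed reduction of \eqref{leone} for $j\in Z_i$ by $z^n_j$ and sum over $j\in Z_i$. Using $z^n_j\cdot z^n_j/|z^n_j|^2=1$, the identity \eqref{ido} for the quadratic interaction sum, and $|z^n_j|\to0$ to absorb the $O(1)$ remainders, one obtains
$$\alpha_i(\beta_1^n+\beta_2^n)\,\#Z_i\;=\;(\beta_1^n-\beta_2^n)\,\#Z_i(\#Z_i-1)\;+\;o(1).$$
Since $\alpha_i\neq0$ and $\beta_1^n-\beta_2^n\to0$ from the first step, this forces $\beta_1^n+\beta_2^n\to0$, whence $(\beta_1^n)^2+(\beta_2^n)^2=\frac12\bigl[(\beta_1^n+\beta_2^n)^2+(\beta_1^n-\beta_2^n)^2\bigr]\to0$, contradicting \eqref{ops}; this would complete the proof.

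The main obstacle I anticipate is the error bookkeeping in the global balance identity: the $(1+o(1))$ factors produced by \eqref{1538} are only $o(1/|z^n_j-z^n_k|)$, so after multiplication by $z^n_j$ they contribute $o(|z^n_j|/|z^n_j-z^n_k|)$, which is harmless only once one knows that inter-point distances within $Z_i$ are not much smaller than distances to $p_i$ — precisely the statement being proved. I expect to handle this by using the antisymmetry of the leading interaction term (whose sum vanishes exactly) together with the smallness $\beta_1^n-\beta_2^n=o(1)$ already obtained, or, alternatively, by first reducing — through a recursive extraction of the finest scale — to the case in which all distances inside $Z_i$ are mutually comparable, after which the remaining estimates are routine. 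A secondary point requiring care is the choice in the first step of the finest \emph{bad} pair and of its cluster $I$, so that $I$ ends up well separated both from $p_i$ and from $Z_i\setminus I$.
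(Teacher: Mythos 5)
Your first step is essentially the paper's: after discarding the $O(1)$ terms in \eqref{leone} for $j\in Z_i$ (and you correctly identify (H4) and $K(p_i)>0$ as the reason those terms are bounded), you localize at a finest cluster $I\ni j_0,k_0$, test against $y_{\xi^n_{j_0}}(\xi^n_j)$, and use \eqref{ido} to conclude $\beta_1^n-\beta_2^n\to 0$. Two refinements are needed for the sequel, and both are in the paper: the pair $(j_0,k_0)$ must be chosen to minimize the \emph{ratio} $d_g(\xi^n_j,\xi^n_k)/d_g(\xi^n_j,p_i)$ over $Z_i$, not the distance among ``bad'' pairs, and the first weighted sum must be recorded quantitatively as $\beta_1^n-\beta_2^n=O(\lambda_n/\rho_n)$, with $\lambda_n=d_g(\xi^n_{j_0},\xi^n_{k_0})$ and $\rho_n=d_g(\xi^n_{j_0},p_i)$, rather than merely as $o(1)$.

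The genuine gap is the second step, and you have diagnosed it yourself without closing it: in the global balance over all of $Z_i$, the remainders coming from \eqref{1538} contribute $|\beta_1^n-\beta_2^n|\cdot o\bigl(|z^n_j|/|z^n_j-z^n_k|\bigr)$, and these ratios may diverge arbitrarily fast. Neither of your remedies works. The $o(|z^n_j-z^n_k|)$ numerator in \eqref{1538} is not antisymmetric in $(j,k)$, so pairing $(j,k)$ with $(k,j)$ still leaves an error of size $o(|z^n_j|/|z^n_j-z^n_k|)$; and the qualitative smallness $\beta_1^n-\beta_2^n=o(1)$ cannot beat a factor blowing up at an unknown rate --- indeed, with your minimal-\emph{distance} selection, pairs other than $(j_0,k_0)$ may have $|z^n_j|/|z^n_j-z^n_k|\gg\rho_n/\lambda_n$, so even the rate $O(\lambda_n/\rho_n)$ would not suffice. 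The ``recursive reduction to comparable scales'' is not a legitimate reduction: multi-scale configurations inside $Z_i$ are exactly what must be handled. The argument does close if you adopt the minimal-ratio selection (so that $|z^n_j|/|z^n_j-z^n_k|\leq\rho_n/\lambda_n$ for every pair) together with the quantitative bound $\beta_1^n-\beta_2^n=O(\lambda_n/\rho_n)$; the paper instead sidesteps the issue entirely by taking the second weighted sum, with weight $y_{\xi^n_{j_0}}(\xi^n_j)-y_{\xi^n_{j_0}}(p_i)$, only over the cluster $I$ (see \eqref{mash}), where every remainder is either $o(\beta_1^n+\beta_2^n)$, or $O(\rho_n)$, or $(\beta_1^n-\beta_2^n)\,o(\rho_n/\lambda_n)=o(1)$, yielding $(\beta_1^n+\beta_2^n)\alpha_i\#I=o(1)$ and the contradiction with \eqref{ops}. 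As written, your proof is incomplete at this point.
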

\begin{proof} 
Fix $i=1,\ldots,\ell$  with $\#Z_i\geq 2$. We proceed by contradiction assuming  that there exist indices  $j_0, k_0\in Z_i$, $j_0\neq k_0$, such that \beq\label{mindi}\frac{d_g(\xi^n_{j_0},\xi_{k_0}^n)}{d_g(\xi^n_{j_0},p_{i})}=\min_{i,j\in Z_i,j\neq j}\frac{d_g(\xi^n_{j},\xi_{k}^n)}{d_g(\xi^n_{j},p_{i})}\to 0.\eeq
 According to \eqref{mindi}  we can split $Z_i=I\cup J$ where $$I=\big\{j\in Z_i\,\big|\, d_g(\xi_{j}^n,\xi_{j_0}^n)\sim d_g(\xi_{j_0}^n,\xi_{k_0}^n)\big\}\cup\{j_0\},\qquad J=\big\{j\in Z_i\,\big|\,  d_g(\xi_{j_0}^n,\xi_{k_0}^n)=o(d_g(\xi_j^n,\xi_{j_0}^n))\big\}.$$ Clearly $j_0,k_0\in I$, so $\#I\geq 2$.
We observe that by construction 
$$d_g(\xi_j^n, p_{i})\sim d_g(\xi_{j_0}^n,p_{i})\quad \forall j\in I$$ and $$d_g(\xi_j^n, \xi_k^n)\sim d_g(\xi_{j_0}^n,\xi_{k_0}^n)\quad \forall j,k\in I,\; j\neq k.$$ 
Moreover $d_g(\xi_{j_0}^n,\xi_{k_0}^n)=o(d_g(\xi_j^n,\xi_k^n))$ for all $j\in I$ and $k\in J$, by which $$\nabla_{\xi_j}G(\xi_j^n, \xi_k^n)=o\bigg(\frac{1}{d_g(\xi_{j_0}^n,\xi_{k_0}^n)}\bigg)\quad \forall j\in I,\, k\in J.$$
Using the local chart 
 $y_{\xi_{j_0}^n}$, and recalling \eqref{1538}, the identities \eqref{leone} give
\beq\label{mash}\begin{aligned}(\beta_1^n+\beta_2^n)&\alpha_i\frac{y_{\xi_{j_0}^n}(\xi_j^n)-y_{\xi_{j_0}^n}(p_{i})}{|y_{\xi_{j_0}^n}(\xi_j^n)-y_{\xi_{j_0}^n}(p_{i})|^2}-2(\beta_1^n-\beta_2^n)\sum_{k\in I\atop k\neq j}\frac{y_{\xi_{j_0}^n}(\xi_j^n)-y_{\xi_{j_0}^n}(\xi_k^n)}{|y_{\xi_{j_0}^n}(\xi_j^n)-y_{\xi_{j_0}^n}(\xi_k^n)|^2}\\ &=o\Big(\frac{\beta_1^n+\beta_2^n}{d_g(\xi_{j_0}^n,p_{i})}\Big)+o\bigg(\frac{\beta_1^n-\beta_2^n}{d_g(\xi_{j_0}^n,\xi_{k_0}^n)}\bigg)+O(1)\qquad \forall j\in I.\end{aligned}\eeq
Let us multiply the above identity by $y_{\xi_{j_0}^n}(\xi_j^n)$ and next sum in $j\in I$;  using \eqref{ido} 
we obtain
$$
(\beta_1^n -\beta_2^n)\#I(\#I-1)=
(\beta_1^n+\beta_2^n)O\Big(\frac{d_g(\xi_{j_0}^n,\xi_{k_0}^n)}{d_g(\xi_{j_0}^n,p_{i})}\Big)+o(\beta_1^n-\beta_2^n)+O(d_g(\xi_{j_0}^n,\xi_{k_0}^n)).$$ 
by which we get \beq\label{beta12}\beta_1^n-\beta_2^n=
O\Big(\frac{d_g(\xi_{j_0}^n,\xi_{k_0}^n)}{d_g(\xi_{j_0}^n,p_{i})}\Big)=o(1). 
\eeq 
Next we multiply  identity \eqref{mash} by $y_{\xi_{j_0}^n}(\xi_j^n)-y_{\xi_{j_0}^n}(p_{i})$ and sum in $j\in I$; by using again relation \eqref{ido}
we get
$$(\beta_1^n+\beta_2^n)\alpha_i\#I=
(\beta_1^n -\beta_2^n)\#I(\#I-1)+
o(\beta_1^n+\beta_2^n)+(\beta_1^n-\beta_2^n)o\Big(\frac{d_g(\xi_{j_0}^n,p_{i})}{d_g(\xi_{j_0}^n,\xi_{k_0}^n)}\Big)+O(d_g(\xi_{j_0}^n,p_{i})).$$ 
Taking inti account of  \eqref{beta12} we obtain that  $\beta_1^n+\beta_2^n=o(1)$, in contradiction with  \eqref{ops}.

\end{proof}
\medskip  

Let us sum up all the previous information contained in the Lemma \ref{step0}, Lemma \ref{step1} and Lemma \ref{step2} in order to finally get the conclusion. 
According to $\rm d)$ of Lemma \ref{step1}, there exists $i=1,\ldots,\ell$ be such that  $\#Z_i \geq 2$. Let us split $Z_i$   as $Y_1\cup \dots \cup Y_l$, $l\geq 1$, in such a way that
$$d_g(\xi_j^n,p_i)\sim d_g(\xi_k^n,p_i)\quad \forall j,k\in Y_r$$ 
and 
\beq\label{mario2}d_g(\xi_j^n,p_i)=o(d_g(\xi_k^n,p_i))\quad \forall j\in Y_r, \, \forall k\in Y_{r+1}\cup\dots \cup Y_l.\eeq 
Notice that by construction $d_g(\xi_j^n,\xi_k^n)\sim d_g(\xi_k^n,p_i)$ for all $j\in Y_r$, $k\in Y_{r+1}\cup\dots \cup Y_l$, and by Lemma \ref{step2} $d_g(\xi_j^n,\xi_k^n)\sim  d_g(\xi_k^n,p_i)$ for all $j, k\in Y_r$, $j\neq k$, yielding 
\beq\label{mario3}d_g(\xi_j^n,\xi_k^n)\sim d_g(\xi_k^n,p_i)\quad \forall j\in Y_r,\;\;\forall k\in Y_{r}\cup\dots \cup Y_{l} ,\quad j\neq k.\eeq
Combining \eqref{mario2}-\eqref{mario3} we get
$$\nabla_{\xi_j}G(\xi_j^n, \xi_k^n)=o\bigg(\frac{1}{d_g(\xi_j^n,p_i)}\bigg)\quad \forall j\in Y_r, \,\,\forall k\in Y_{r+1}\cup \dots \cup Y_{l}.$$
Next consider $r\in\{1,\ldots,l\}$ and write \eqref{leone}  for $j\in Y_r$ in the local chart $y_{p_i}$: 
\beq\label{qw2}2(\beta_1^n-\beta_2^n) \!\!\sum_{k\in Y_1\cup \dots \cup Y_r\atop k\neq j}\!\! \frac{y_{p_i}(\xi_j^n)-y_{p_i}(\xi_k^n)}{|y_{p_i}(\xi_j^n)-y_{p_i}(\xi_k^n)|^2}=(\beta_1^n+\beta_2^n)\alpha_i \frac{y_{p_i}(\xi_j^n)}{|y_{p_i}(\xi_j^n)|^{2}}+o\bigg(\frac{1}{d_g(\xi_j^n,p_i)}\bigg)\eeq 
for all $j\in Y_r$. By \eqref{mario2} and \eqref{mario3} we find  $|y_{p_i}(\xi_j^n)|=o(|y_{p_i}(\xi_j^n)-y_{p_i}(\xi_k^n)|) $ for all $ j\in Y_1\cup \dots \cup Y_{r-1}$, $k\in Y_r$, and we can compute 
$$\frac{\langle y_{p_i}(\xi_j^n)-y_{p_i}(\xi_k^n),y_{p_i}(\xi_j^n)\rangle }{|y_{p_i}(\xi_j^n)-y_{p_i}(\xi_k^n)|^2}\!=1+\frac{\langle y_{p_i}(\xi_j^n)-y_{p_i}(\xi_k^n),y_{p_i}(\xi_k^n)\rangle }{|y_{p_i}(\xi_j^n)-y_{p_i}(\xi_k^n)|^2}\!=1+o(1)$$
for all $j\!\in\! Y_r$ and $k\!\in\! Y_1\cup\dots \cup Y_{r-1}$. Combining this with identity \eqref{ido}, 
by taking the inner product of \eqref{qw2} with $y_{p_i}(\xi_j^n)$ and summing up in $j\in Y_r$ we get that
\beq\label{gra2}2(\beta_1^n-\beta_2^n)\bigg(
\frac{\# Y_r(\# Y_r-1)}{2}+\#Y_r(\#Y_1+\ldots+\#Y_{r-1})
\bigg)=(\beta_1^n+\beta_2^n)\alpha_i\# Y_r +o(1)\quad \forall r\geq 1.\eeq
Since $\# Z_{i}\geq 2$, notice that the coefficient in brackets on the left hand side of  \eqref{gra2} is positive when  $r=l$, and then $\frac{1}{\alpha_i}(\beta_1^n-\beta_2^n)$ and $\beta_1^n+\beta_2^n$ are positively proportional up to higher order terms. So, by \eqref{ops} and \eqref{gra2} (with $r=l$) we deduce that
\beq\label{bell0}\frac{\beta_1^n-\beta_2^n}{\alpha_i},\, \beta_1^n+\beta_2^n\geq c>0.\eeq 
  By \eqref{gra2} 
we also have
\beq\label{bell}|\beta_2^n|\geq c>0.\eeq Indeed, if $\beta_2^n =o(1)$ we would obtain $\beta_1^n=1+o(1)$ and, consequently,   $\#Y_1-1=\alpha_i$
in view of \eqref{gra2} (with  $r=1$), contradicting the compactness assumption \eqref{acca5}--\eqref{acca55}. Let us evaluate the different pieces of the energy as follows: 
$$\begin{aligned}&\sum_{j,k\in Z_i \atop j\neq k}   G(\xi_j^n,\xi_k^n)-\alpha_i\sum_{j\in Z_i} G(\xi_j^n,p_i)\\&=\sum_{r=1}^l\sum_{j,k\in Y_r \atop j\neq k}   G(\xi_j^n,\xi_k^n)+2\sum_{r=1}^l\!\!\sum_{j\in Y_r\atop k\in Y_1\cup \dots \cup Y_{r-1}}\!\!\!\!\!\!  G(\xi_j^n,\xi_k^n)-\alpha_i\sum_{r=1}^l \sum_{j\in Y_r}  G(\xi_j^n,p_i)\\
&=-\frac{1}{2\pi} \sum_{r=1}^l \!\bigg[\sum_{j,k\in Y_r\atop j\neq k}   \log d_g(\xi_j^n,\xi_k^n) +2 \!\!\!\!\sum_{j\in Y_r\atop k\in Y_1\cup\dots\cup Y_{r-1}}\!\!\!\!\!\!   \log d_g(\xi_j^n,\xi_k^n)-\alpha_i \sum_{j\in Y_r} \log d_g(\xi_j^n,p_i)\bigg]+O(1).
\end{aligned}$$ 
Since $d_g(\xi_j^n,\xi_k^n)\sim d_g(\xi_j^n,p_i)$ for all $j\in Y_r$ and $k\in Y_1\cup \dots \cup Y_{r}$ with $j\neq k$ thanks  to \eqref{mario3}, fixed $j_r\in Y_r$ 
 we have that
\begin{eqnarray*}
&&\sum_{j,k\in Y_r\atop j\neq k}   \log d_g(\xi_j^n,\xi_k^n)+2\sum_{j\in Y_r\atop k\in Y_1\cup\dots\cup Y_{r-1}}  \log d_g(\xi_j^n,\xi_k^n)-\alpha_i\sum_{j\in Y_r} \log d_g(\xi_j^n,p_i)\\ 
&&=  \bigg(\sum_{j,k\in Y_r\atop j\neq k} 1  +2\sum_{j\in Y_r\atop k\in Y_1\cup\dots\cup Y_{r-1}}1   -\alpha_i\sum_{j\in Y_r}1 \bigg)\log d_g(\xi_{j_r}^n,p_i)+O(1)\\ 
&&=\#Y_r\Big(\#Y_{r}-1   +2(\#Y_1+\ldots+\#Y_{r-1} ) -\alpha_i \Big)\log d_g(\xi_{j_r}^n,p_i) +O(1)\\
&&= \frac{2\beta_2^n+o(1)}{\beta_1^n-\beta_2^n}\alpha_i\#Y_r
\log d_g(\xi_{j_r}^n,p_i) +O(1)
\end{eqnarray*}
where in the last identity we have used \eqref{gra2}.  Recalling \eqref{bell0}--\eqref{bell}, we have thus proved that 
\beq\label{puff}\frac{1}{\beta_2^n}\bigg(\sum_{j,k\in Z_i \atop j\neq k}    G(\xi_j^n,\xi_k^n)-\alpha_i\sum_{j\in Z_i} G(\xi_j^n,p_i)\bigg)\to +\infty\eeq 
for all $Z_i $ with $\#Z_i\geq 2$. 
On the other hand by \eqref{bell0} and part $\rm a)$ of Lemma \ref{step1} we have that for any $i=1,\ldots,\ell$ either $Z_i=\emptyset$ or $\#Z_i\geq 2$. Moreover part $\rm b)$  and c) 
 give $K(\xi_j^n)=O(1)$ for all $j$ and   $G(\xi_j^n,\xi_k^n)=O(1)$ for all $(j,k)\notin  \bigcup_{i=1}^\ell (Z_i \times Z_i)$. Then we conclude 
\begin{eqnarray*}
\frac{1}{\beta_2^n} \Psi(\bbm[\xi]_n)&=&\frac{1}{\beta_2^n}\bigg[\sum_{j,k=1\atop j\neq k}^N   G(\xi_j^n, \xi_k^n)-\sum_{i=1}^\ell\alpha_i\sum_{j=1}^N G(\xi_j^n,p_i)\bigg]+O(1) \\ 
&=& \frac{1}{\beta_2^n} \sum_{i=1}^\ell \bigg[\sum_{j,k\in Z_i \atop j\neq k}     G(\xi_j^n, \xi_k^n)- \alpha_i\sum_{j\in Z_i}  G(\xi_j^n,p_i)\bigg]+O(1) \to +\infty
\end{eqnarray*}
in view of \eqref{puff}, in contradiction with \eqref{boupsi} and \eqref{bell}.
 
\

\section{More existence results}\label{Section:ultima}

In this section we get other existence results for solutions to  \ref{liouvgeom}, using Proposition \ref{prop:espofigue}.

\

In general it is hard to guarantee the validity of condition \emph{b)} of Proposition \ref{prop:espofigue} and in all our previous results we got it for $K$ sign-changing and satisfying in particular condition \eqref{condiz}  discussed in Remark \ref{rem:A<0}, which implies $\mathcal  A(\bbm[\xi])<0$ for all $\bbm[\xi]\in\cal M$;  in such a case condition \emph{b)} is satisfied and this led then to solutions to \ref{liouvgeom}, for $\rho_{geo}$ close to integer multiples of $8\pi$ \emph{from the left} hand side.

\

As noticed in Remark \ref{rem:A>0} it is not possible 
to impose on $K$ a simple  condition  like \eqref{condiz} and have instead $\mathcal  A(\bbm[\xi])>0$ for all $\bbm[\xi]\in\cal M$, which would lead then to solutions to \ref{liouvgeom}, for $\rho_{geo}$ close to integer multiples of $8\pi$ \emph{from the right} hand side. And this holds both for $K$ sign-changing or positive.

\

Moreover observe that if we consider functions $K>0$ then, since $\Delta_g \log K$ changes sign on $\Sigma$,  it is not even possible to impose on it the simple condition \eqref{condiz} in Remark \ref{rem:A<0} and  have $\mathcal  A(\bbm[\xi])<0$ for all $\bbm[\xi]\in\cal M$. Namely for $K$ positive it is hard to get solutions to \ref{liouvgeom} via Proposition \ref{prop:espofigue} even for $\rho_{geo}$ close to integer multiples of $8\pi$ \emph{from the left} hand side.

\

Nevertheless in this section we exhibit classes of functions $K$, sign-changing or also positive, (and values of $\alpha_i$'s) for which we are able to produce a stable critical point $\bbm[\xi]^*_{\underline{\alpha}}$ of $\Psi_{\underline{\alpha}}$ fulfilling conditions \emph{b)} and \emph{c)} of Proposition \ref{prop:espofigue}, obtaining in this way solutions to \ref{liouvgeom}, for $\rho_{geo}$ close to an integer multiple of $8\pi$ {\it both from the right and from the left}.

\

Let us first state rigorously these results on any compact orientable surface without boundary $(\Sigma,g)$. Next we will deduce from them Theorem \ref{thm:examples1}, Theorem \ref{thm:examples2} and Theorem \ref{thm:examples3} in the introduction which are related to the case of the standard sphere and to situations for which general existence results  are not available in the literature.\\\\
\
Let $m, N\in\mathbb N$, $p_1,\ldots,p_m\in\Sigma$ and   $-1<\alpha_{\star}\leq \alpha^{\star}$ be fixed. For any $\underline s=(s_1,\ldots, s_m)$, $\alpha_*\leq s_i\leq\alpha^*$, we define the functional 
\begin{equation}\label{def:Dalpha}
D_{\underline{s}}(\bbm[\xi]):=\sum_{j=1}^N h(\xi_j, \xi_j)+\frac{1}{4\pi}\sum_{j=1}^N f_g(\xi_j)-\sum_{i=1}^m s_i\sum_{j=1}^N G(\xi_j,p_i)+\sum_{j,k=1\atop j\neq k}^NG(\xi_j,\xi_k),
\end{equation}which is well defined in the set
\beq\label{def:dom}{\cal M}:=(\Sigma\setminus\{p_1,\dots,p_m\})^N\setminus \Delta,\qquad \Delta:=\Big\{\bbm[\xi]\in \Sigma^N\,\Big|\,  \xi_j= \xi_k\,\hbox{ for some } j\neq k\Big\}\eeq
where 
$G(x,p)$ is  the Green's function of $-\Delta_g$ over $\Sigma$ with singularity at $p$, the function $h$ denotes  its regular part as in \eqref{0948}, and $f_g$ is defined by \eqref{delgel}. Next we fix $\bbm[\bar\xi]\in{\cal M}$ and 
 we consider a radius $r=r(\bbm[{\bar\xi}])>0$ such that 
 \begin{equation}\label{extra}
 B_{2r}(\bbm[\bar\xi]):=\{\bbm[\xi]\in\Sigma^N\,|\,d_g(\bbm[\bar\xi],\bbm[\xi])<2r \}\subset {\cal M}.
 \end{equation}
where in the above brackets with a small abuse of notation we have continued to denote by $d_g$ the distance on  $\Sigma^N$. 
Let us set 
\[
M^+_{\bar\xi,\alpha_*,\alpha^*}:=\!\!\max\limits_{{\tiny\begin{array}{c}\alpha_*\leq s_i\leq\alpha^*\\i=1,\ldots,m\end{array}}}\max\limits_{\hbox{\scriptsize$\bbm[\xi]$}\in \overline{B_{\frac r2}(\hbox{\scriptsize$\bbm[\bar\xi]$})}}D_{\underline{s}}(\bbm[\xi])\qquad
\mbox{and}
\qquad 
m^+_{\bar\xi,\alpha_*,\alpha^*}:=\!\!\min\limits_{{\tiny\begin{array}{c}\alpha_*\leq s_i\leq\alpha^*\\i=1,\ldots,m\end{array}}}\min\limits_{\hbox{\scriptsize$\bbm[\xi]$}\in\partial B_r(\hbox{\scriptsize$\bbm[\bar\xi]$})} D_{\underline{s}}(\bbm[\xi]).
\]
Let  $K\in C^2(\Sigma)$ be  either a  positive  function or a sign-changing function satisfying the following 
\begin{equation}\label{prima}
\inf_{\hbox{\scriptsize$\bbm[\xi]$}\in B_r(\hbox{\scriptsize$\bbm[\bar\xi]$})} \min_{j=1,\ldots,N} K(\xi_j)>0,
\end{equation}
\begin{equation}\label{seconda}
\max_{\hbox{\scriptsize$\bbm[\xi]$}\in \overline{B_{\frac r2}(\hbox{\scriptsize$\bbm[\bar\xi]$})}} \Big(\frac{1}{4\pi}\sum_{j=1}^N \log K(\xi_j)\Big)< \min_{\hbox{\scriptsize$\bbm[\xi]$}\in {\partial B_{r}(\hbox{\scriptsize$\bbm[\bar\xi]$})}}\Big(\frac{1}{4\pi}\sum_{j=1}^N \log K(\xi_j)\Big)+m^+_{\bar\xi,\alpha_*,\alpha^*}-M^+_{\bar\xi,\alpha_*,\alpha^*}
\end{equation}
and 
\begin{equation}\label{terza}
\inf_{\hbox{\scriptsize$\bbm[\xi]$}\in {B_{r}(\hbox{\scriptsize$\bbm[\bar\xi]$})}}\min_{j\in1,\ldots,N} \Delta_{g}\log K(\xi_j)\geq \frac{1}{|\Sigma|}.
\end{equation}

Finally let us define the  class of functions  \begin{eqnarray}\label{classeDiK}\mathcal  K_{\bar\xi,\alpha_*,\alpha^*}^+:=\left\{K\in C^2(\Sigma)\ :\   K \mbox{ satisfies  \eqref{prima}, \eqref{seconda}  and \eqref{terza}}\right\}\end{eqnarray}

It is clear that for any fixed $\bbm[\bar\xi], \alpha_*,\alpha^*$ we can exhibit  plenty of functions $K$, both positive or sign-changing,  belonging to the class $\mathcal K_{\bar\xi,\alpha_*,\alpha^*}^+$. Roughly speaking,  \eqref{prima}, \eqref{seconda}  and \eqref{terza} are satisfied if $K$ has   \emph{sufficiently convex} local minima at  points $\bar\xi_1,\ldots, \bar\xi_N, $ where $\bbm[\bar\xi]=(\bar \xi_1,\ldots, \bar\xi_N)$. 
\

\begin{theorem}\label{prop:example1}
For any $K\in\mathcal  K^+_{\bar\xi,\alpha_*,\alpha^*}$ there exists $\delta=\delta(\alpha_*,\alpha^*,K)>0$ such that if
\begin{equation}\label{hpsum}
\sum_{i=1}^m \alpha_i  =  2N-{\chi(\Sigma)}+\frac{\varepsilon}{4\pi}\qquad  {\mbox{ for }\varepsilon\in (0, \delta)} \end{equation}
and moreover
\begin{equation}\label{alphatra}\alpha_*\leq\alpha_i\leq\alpha^*\qquad\mbox{ for all $i=1,\ldots, m$}
\end{equation}
then \ref{liouvgeom} admits a solution {with $\rho_{geo}=8\pi N +\varepsilon$.}
\end{theorem}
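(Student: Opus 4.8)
The plan is to deduce the theorem from the finite--dimensional reduction, Proposition \ref{prop:espofigue}. I would apply it with $\alpha_\star=\alpha_*$ and $\alpha^\star=\alpha^*$ (if $\alpha_*=\alpha^*$, first enlarge the interval to $[\alpha_*-\eta,\alpha^*+\eta]$ for a small $\eta>0$), obtaining a number $\delta'=\delta'(\alpha_*,\alpha^*)>0$. By \eqref{hpsum} and \eqref{GBsing} one has $\chi(\Sigma,\underline\alpha)=\chi(\Sigma)+\sum_i\alpha_i=2N+\frac{\varepsilon}{4\pi}$, so that $\rho_{geo}=4\pi\chi(\Sigma,\underline\alpha)=8\pi N+\varepsilon$ lies in $(8\pi N,8\pi N+\delta')$ exactly when $0<\varepsilon<\delta'$; moreover $8\pi N-4\pi\chi(\Sigma,\underline\alpha)=-\varepsilon$. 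Hence it suffices to produce, for every $\underline\alpha$ satisfying \eqref{alphatra}, a \emph{stable} critical point $\bbm[\xi]^*_{\underline\alpha}\in{\cal M}^+$ of $\Psi_{\underline\alpha}$ with $\mathcal A(\bbm[\xi]^*_{\underline\alpha})>0$: Proposition \ref{prop:espofigue} then yields a solution of \ref{liouvhat} with $\rho=\rho_{geo}$, which, through the change of variables \eqref{v}, is a solution of \ref{liouvgeom}.

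First I would record that, by \eqref{aaa}, \eqref{psi0} and \eqref{def:Dalpha},
\begin{equation*}
\Psi_{\underline\alpha}(\bbm[\xi])=D_{\underline\alpha}(\bbm[\xi])+\frac1{4\pi}\sum_{j=1}^N\log K(\xi_j).
\end{equation*}
By \eqref{extra} and \eqref{prima}, every $\bbm[\xi]\in B_r(\bbm[\bar\xi])$ has pairwise distinct components, all different from $p_1,\dots,p_m$ and all lying in $\Sigma^+$; hence $B_r(\bbm[\bar\xi])\subset{\cal M}^+$ and $\Psi_{\underline\alpha}\in C^1(B_r(\bbm[\bar\xi]))$. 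Since $\underline\alpha$ belongs to the box $[\alpha_*,\alpha^*]^m$ over which $M^+_{\bar\xi,\alpha_*,\alpha^*}$ and $m^+_{\bar\xi,\alpha_*,\alpha^*}$ are computed, we have $D_{\underline\alpha}\le M^+_{\bar\xi,\alpha_*,\alpha^*}$ on $\overline{B_{r/2}(\bbm[\bar\xi])}$ and $D_{\underline\alpha}\ge m^+_{\bar\xi,\alpha_*,\alpha^*}$ on $\partial B_{r}(\bbm[\bar\xi])$; combining these with \eqref{seconda} gives
\begin{equation*}
\begin{aligned}
\max_{\overline{B_{r/2}(\bbm[\bar\xi])}}\Psi_{\underline\alpha}
&\le M^+_{\bar\xi,\alpha_*,\alpha^*}+\max_{\overline{B_{r/2}(\bbm[\bar\xi])}}\tfrac1{4\pi}\sum_{j}\log K(\xi_j)\\
&< m^+_{\bar\xi,\alpha_*,\alpha^*}+\min_{\partial B_{r}(\bbm[\bar\xi])}\tfrac1{4\pi}\sum_{j}\log K(\xi_j)
\le \min_{\partial B_{r}(\bbm[\bar\xi])}\Psi_{\underline\alpha}.
\end{aligned}
\end{equation*}
In particular $\min_{\overline{B_r(\bbm[\bar\xi])}}\Psi_{\underline\alpha}<\min_{\partial B_r(\bbm[\bar\xi])}\Psi_{\underline\alpha}$, so $\Psi_{\underline\alpha}$ attains its minimum over $\overline{B_r(\bbm[\bar\xi])}$ at an \emph{interior} point $\bbm[\xi]^*_{\underline\alpha}$, which is therefore a (possibly degenerate) local minimum of $\Psi_{\underline\alpha}$. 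Since the strict gap $\min_{\partial B_r}\Psi_{\underline\alpha}-\min_{\overline{B_r}}\Psi_{\underline\alpha}>0$ survives small ${\cal C}^1$--perturbations of $\Psi_{\underline\alpha}$, every functional $C^1$--close to $\Psi_{\underline\alpha}$ still attains its minimum over $\overline{B_r(\bbm[\bar\xi])}$ inside $B_r(\bbm[\bar\xi])$, hence has a critical point there; thus $\bbm[\xi]^*_{\underline\alpha}$ is a stable critical point in the sense of Definition \ref{defli}.

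It then remains to check condition $b)$. Writing $\bbm[\xi]^*_{\underline\alpha}=(\xi^*_1,\dots,\xi^*_N)$ and using the second expression in \eqref{matA} together with $8\pi N-4\pi\chi(\Sigma,\underline\alpha)=-\varepsilon$,
\begin{equation*}
\mathcal A(\bbm[\xi]^*_{\underline\alpha})=4\pi\sum_{j=1}^N\tilde K(\xi^*_j)\,e^{8\pi h(\xi^*_j,\xi^*_j)+8\pi\sum_{k\ne j}G(\xi^*_j,\xi^*_k)}\Big[\Delta_g\log K(\xi^*_j)-\frac{\varepsilon}{|\Sigma|}\Big].
\end{equation*}
As $\bbm[\xi]^*_{\underline\alpha}\in B_r(\bbm[\bar\xi])$, \eqref{prima} gives $K(\xi^*_j)>0$, hence $\tilde K(\xi^*_j)>0$, while \eqref{terza} gives $\Delta_g\log K(\xi^*_j)\ge\frac1{|\Sigma|}$; so each bracket is $\ge\frac{1-\varepsilon}{|\Sigma|}>0$ as soon as $\varepsilon<1$, whence $\mathcal A(\bbm[\xi]^*_{\underline\alpha})>0$. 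Taking $\delta:=\min\{\delta'(\alpha_*,\alpha^*),1\}$ then finishes the proof. The one genuinely delicate point is the trapping inequality in the middle step: one must exploit that $M^+_{\bar\xi,\alpha_*,\alpha^*}$ and $m^+_{\bar\xi,\alpha_*,\alpha^*}$ are extrema over the \emph{entire} parameter box $[\alpha_*,\alpha^*]^m$, so that the resulting stable critical point — and hence the applicability of Proposition \ref{prop:espofigue} with a single $\delta'$ — is uniform with respect to $\underline\alpha$; the rest is bookkeeping with the Gauss--Bonnet relation and the explicit form of $\mathcal A$.
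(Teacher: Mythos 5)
Your proposal is correct and follows essentially the same route as the paper's proof: you use \eqref{seconda} together with the uniform bounds $M^+_{\bar\xi,\alpha_*,\alpha^*}$, $m^+_{\bar\xi,\alpha_*,\alpha^*}$ over the parameter box to trap a (stable) local minimum of $\Psi_{\underline\alpha}$ in $B_r(\bbm[\bar\xi])$, then \eqref{terza} and the identity $8\pi N-4\pi\chi(\Sigma,\underline\alpha)=-\varepsilon$ to get $\mathcal A>0$ for $\varepsilon<1$, and conclude via Proposition \ref{prop:espofigue}. The only cosmetic difference is that you phrase Step~1 as the minimum over $\overline{B_r}$ being attained in the interior rather than exhibiting the max--min gap directly, which is equivalent.
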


\begin{proof} 
By \eqref{extra} and \eqref{prima} we deduce 
\[
\overline{B_r(\bbm[\bar\xi])}\subset \mathcal M^+=(\Sigma^+\setminus\{p_1,\ldots,p_m\})^N\setminus\Delta
\]
and hence the functional $\Psi_{\underline{\alpha}}$, introduced in \eqref{psi0}, is well defined in $ \overline{B_r(\bbm[\bar\xi])}$.\\
We rewrite $\Psi_{\underline{\alpha}}$ as
\begin{equation}
\label{Psi=D+log}
\Psi_{\underline{\alpha}}(\bbm[\xi])= D_{\underline{\alpha}}(\bbm[\xi])+\frac{1}{4\pi}\sum_{j=1}^N \log K(\xi_j),
\end{equation}
where $D_{\underline{\alpha}}$ is  defined in \eqref{def:Dalpha} with $\underline{s}=\underline{\alpha}=(\alpha_1,\ldots,\alpha_m)$.
For the sake of clarity we split the remaining part of the proof into three steps.

\

\noindent STEP 1. {\it We show that assumptions \eqref{seconda} and \eqref{alphatra} imply the existence of a local minimum point (and so a stable critical point) $\bbm[\xi]^*_{\underline\alpha}\in B_r (\bbm[\bar\xi])$   of $\Psi_{\underline{\alpha}}$.} 
\\
\\
In order to prove this step  it is enough to show that
$$
\max\limits_{\hbox{\scriptsize$\bbm[\xi]$}\in \overline{B_{\frac r2}(\hbox{\scriptsize$\bbm[\bar\xi]$})}}\Psi_{\underline{\alpha}}(\bbm[\xi])
<
\min\limits_{\hbox{\scriptsize$\bbm[\xi]$}\in {\partial B_{r}(\hbox{\scriptsize$\bbm[\bar\xi]$})}}\Psi_{\underline{\alpha}}(\bbm[\xi]).
$$
Indeed, by virtue of \eqref{seconda} and assumption \eqref{alphatra} on  the $\alpha_i$'s  we compute
\begin{eqnarray*}
 \max\limits_{\hbox{\scriptsize$\bbm[\xi]$}\in \overline{B_{\frac r2}(\hbox{\scriptsize$\bbm[\bar\xi]$})}}\Psi_{\underline{\alpha}}(\bbm[\xi])
& \stackrel{\eqref{Psi=D+log}}{\leq } &
\max\limits_{\hbox{\scriptsize$\bbm[\xi]$}\in \overline{B_{\frac r2}(\hbox{\scriptsize$\bbm[\bar\xi]$})}}  D_{\underline{\alpha}}(\bbm[\xi])+ \frac{1}{4\pi}\max\limits_{\hbox{\scriptsize$\bbm[\xi]$}\in \overline{B_{\frac r2}(\hbox{\scriptsize$\bbm[\bar\xi]$})}}\Big( \sum_{j=1}^N \log K(\xi_j) \Big)
\\
&\stackrel{\eqref{alphatra}}{\leq}  &
M^+_{\bar\xi,\alpha_*,\alpha^*}+\frac{1}{4\pi}\max\limits_{\hbox{\scriptsize$\bbm[\xi]$}\in \overline{B_{\frac r2}(\hbox{\scriptsize$\bbm[\bar\xi]$})}}\Big(\sum_{j=1}^N\log K(\xi_j)\Big)
\\
&\stackrel{\eqref{seconda}}{<} &
m^+_{\bar\xi,\alpha_*,\alpha^*} +\frac{1}{4\pi}\min_{\hbox{\scriptsize$\bbm[\xi]$}\in {\partial B_{r}(\hbox{\scriptsize$\bbm[\bar\xi]$})}}\Big(\sum_{j=1}^N \log K(\xi_j)\Big)
\\
&\stackrel{\eqref{alphatra}}{\leq} &
\min\limits_{\hbox{\scriptsize$\bbm[\xi]$}\in {\partial B_{r}(\hbox{\scriptsize$\bbm[\bar\xi]$})}}D_{\underline{\alpha}}(\bbm[\xi]) + \frac{1}{4\pi} \min\limits_{\hbox{\scriptsize$\bbm[\xi]$}\in {\partial B_{r}(\hbox{\scriptsize$\bbm[\bar\xi]$})}}\Big(\sum_{j=1}^N\log K(\xi_j)\Big)
\\
&\stackrel{\eqref{Psi=D+log}}{ \leq }&
\min\limits_{\hbox{\scriptsize$\bbm[\xi]$}\in {\partial B_{r}(\hbox{\scriptsize$\bbm[\bar\xi]$})}}\Psi_{\underline{\alpha}}(\bbm[\xi]).
\end{eqnarray*}

\

\noindent STEP 2. {\it We show that 
 if \begin{equation}\label{hParz}
0<\sum_{i=1}^m \alpha_i  - 2N+\chi(\Sigma)<\frac{1}{4\pi}
\end{equation} 
then
    $\mathcal A (\bbm[\xi]^*_{\underline\alpha})>0$, where $\mathcal A$ is the function defined in \eqref{matA}.}
\\
\\
By definition of $\mathcal A$ it will be enough to prove that for any $j=1,\ldots,N$
\[
\Delta_{g}\log K(\xi_j^*)+\frac{8\pi N-4\pi\chi({\Sigma},\underline{\alpha})}{|{\Sigma}|}>0
\]
where $\bbm[\xi]^*_{\underline\alpha}=(\xi_1^*,\ldots,\xi_N^*).$
This holds true by \eqref{hParz} and the condition \eqref{terza}, since 
\begin{eqnarray*}
\Delta_{g}\log K(\xi_j^*)+\frac{8\pi N-4\pi\chi({\Sigma},\underline{\alpha})}{|{\Sigma}|} &= & \Delta_{g}\log K(\xi_j^*)+\frac{8\pi N-4\pi \left( \chi (\Sigma)+ \sum_{i=1}^m \alpha_i \right)}{|{\Sigma}|}
\\
&\stackrel{\eqref{hParz}}{ > }& \Delta_{g}\log K(\xi_j^*) - \frac{1}{|\Sigma|}
\stackrel{\eqref{terza}}{ \geq }0.
\end{eqnarray*}

\

\noindent STEP 3. {\it Conclusion. 
}
\\\\
By \eqref{alphatra},   Step 2 and Step 1 we get that  the assumptions a), b) and c) respectively of Proposition \ref{prop:espofigue} are all satisfied if \eqref{hParz} holds. 
As a consequence \ref{liouvhat} admits a solution for all $\rho\in (8\pi N, 8\pi N +\delta)$ under the assumption \eqref{hParz}, where  $\delta=\delta(\alpha_*,\alpha^*, K)\in (0,1)$ 
is provided in Proposition \ref{prop:espofigue}. 
The conclusion follows observing that  \eqref{hpsum}  implies \eqref{hParz} and $$\rho_{geo}= 4\pi\Big({\chi(\Sigma)}+\sum_{i=1}^m\alpha_i\Big) \in (8\pi N, 8\pi N +\delta).$$
\end{proof}

\

{Similarly one can define the class of functions
\begin{eqnarray}
\label{classeDiKmeno}
{\mathcal  K}^-_{\bar\xi,\alpha_*,\alpha^*}:=
\left\{K\in C^2(\Sigma)\ :\   {K} \mbox{ satisfies  \eqref{prima}, \eqref{seconda2}  and \eqref{terza2}}\right\}
\end{eqnarray}
where
\begin{equation}\label{seconda2}
\max_{\hbox{\scriptsize$\bbm[\xi]$}\in {\partial B_{r}(\hbox{\scriptsize$\bbm[\bar\xi]$})}} \Big(\frac{1}{4\pi}\sum_{j=1}^N \log K(\xi_j)\Big)
< 
\min_{\hbox{\scriptsize$\bbm[\xi]$}\in \overline{B_{\frac r2}(\hbox{\scriptsize$\bbm[\bar\xi]$})}}\Big(\frac{1}{4\pi}\sum_{j=1}^N \log K(\xi_j)\Big)+m^-_{\bar\xi,\alpha_*,\alpha^*}-M_{\bar\xi,\alpha_*,\alpha^*}^-
\end{equation}
for
\[
M^-_{\bar\xi,\alpha_*,\alpha^*}:=\!\!\max\limits_{{\tiny\begin{array}{c}\alpha_*\leq s_i\leq\alpha^*\\i=1,\ldots,m\end{array}}}\max\limits_{ 
\hbox{\scriptsize$\bbm[\xi]$}\in\partial B_r(\hbox{\scriptsize$\bbm[\bar\xi]$})    }D_{\underline{s}}(\bbm[\xi])
\qquad
\mbox{and}
\qquad 
m^-_{\bar\xi,\alpha_*,\alpha^*}:=\!\!\min\limits_{{\tiny\begin{array}{c}\alpha_*\leq s_i\leq\alpha^*\\i=1,\ldots,m\end{array}}}\min\limits_{\hbox{\scriptsize$\bbm[\xi]$}\in \overline{B_{\frac r2}(\hbox{\scriptsize$\bbm[\bar\xi]$})}} D_{\underline{s}}(\bbm[\xi]),
\]
and 
\begin{equation}\label{terza2}
\sup_{\hbox{\scriptsize$\bbm[\xi]$}\in {B_{r}(\hbox{\scriptsize$\bbm[\bar\xi]$})}}\max_{j=1,\ldots,N} \Delta_{g}\log K(\xi_j)\leq - \frac{1}{{|\Sigma|}},
\end{equation}
and prove the following result.}

\begin{theorem}\label{prop:example2} For any $K\in{\mathcal  K}^-_{{\bar\xi,\alpha_*,\alpha^*}}$ there exists $\delta=\delta(\alpha_*,\alpha^*,K)>0$ such that if
\begin{equation}\label{hpsum2} 
\sum_{i=1}^m \alpha_i  = 2N-\chi(\Sigma)-\frac{\varepsilon}{4\pi}\qquad  {\mbox{ for }\varepsilon\in (0, \delta)} 
\end{equation}
and moreover
\begin{equation}\label{alphatra2}\alpha_*\leq\alpha_i\leq\alpha^*\qquad\mbox{ for all $i=1,\ldots, m$}
\end{equation}
then \ref{liouvgeom} admits a solution {with $\rho_{geo}=8\pi N -\varepsilon$. } \end{theorem}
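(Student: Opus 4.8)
The plan is to run verbatim the three-step scheme of the proof of Theorem \ref{prop:example1}, reversing the relevant inequalities so as to produce a local \emph{maximum} of $\Psi_{\underline\alpha}$ at which the function $\mathcal A$ of \eqref{matA} is \emph{negative}, and then to invoke the ``$\mathcal A<0$'' alternative of Proposition \ref{prop:espofigue}, which yields solutions of \ref{liouvhat} for $\rho$ slightly below $8\pi N$. As in the proof of Theorem \ref{prop:example1}, conditions \eqref{extra} and \eqref{prima} guarantee $\overline{B_r(\bbm[\bar\xi])}\subset\mathcal M^+$, so $\Psi_{\underline\alpha}$ is well defined and of class $C^1$ there, and one has the decomposition $\Psi_{\underline\alpha}=D_{\underline\alpha}+\frac1{4\pi}\sum_{j=1}^N\log K(\xi_j)$ as in \eqref{Psi=D+log}, with $\underline s=\underline\alpha$.

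\emph{Step 1 (a stable critical point).} I would show that \eqref{seconda2} together with \eqref{alphatra2} forces
\[
\min_{\bbm[\xi]\in\overline{B_{r/2}(\bbm[\bar\xi])}}\Psi_{\underline\alpha}(\bbm[\xi])>\max_{\bbm[\xi]\in\partial B_r(\bbm[\bar\xi])}\Psi_{\underline\alpha}(\bbm[\xi]),
\]
which produces an interior local maximum point $\bbm[\xi]^*_{\underline\alpha}\in B_r(\bbm[\bar\xi])$ of $\Psi_{\underline\alpha}$, hence a stable critical point in the sense of Definition \ref{defli}. The chain of inequalities is the mirror image of the one in Step 1 of Theorem \ref{prop:example1}: on $\overline{B_{r/2}(\bbm[\bar\xi])}$ bound $\Psi_{\underline\alpha}$ from below by $m^-_{\bar\xi,\alpha_*,\alpha^*}+\frac1{4\pi}\min_{\overline{B_{r/2}(\bbm[\bar\xi])}}\sum_j\log K(\xi_j)$ using $\alpha_i\in[\alpha_*,\alpha^*]$ and the definition of $m^-$, then apply \eqref{seconda2}, and finally on $\partial B_r(\bbm[\bar\xi])$ bound $\Psi_{\underline\alpha}$ from above by $M^-_{\bar\xi,\alpha_*,\alpha^*}+\frac1{4\pi}\max_{\partial B_r(\bbm[\bar\xi])}\sum_j\log K(\xi_j)\ge\max_{\partial B_r(\bbm[\bar\xi])}\Psi_{\underline\alpha}$, again using $\alpha_i\in[\alpha_*,\alpha^*]$ and the definition of $M^-$.

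\emph{Step 2 ($\mathcal A(\bbm[\xi]^*_{\underline\alpha})<0$).} From \eqref{hpsum2} one gets $8\pi N-4\pi\chi(\Sigma,\underline\alpha)=\varepsilon\in(0,\delta)$, where $\delta\in(0,1)$ is the constant supplied by Proposition \ref{prop:espofigue} (shrunk if necessary). Then condition \eqref{terza2} gives, for every $j=1,\dots,N$,
\[
\Delta_g\log K(\xi_j^*)+\frac{8\pi N-4\pi\chi(\Sigma,\underline\alpha)}{|\Sigma|}\le-\frac1{|\Sigma|}+\frac{\varepsilon}{|\Sigma|}=\frac{\varepsilon-1}{|\Sigma|}<0,
\]
and since $\tilde K(\xi_j^*)>0$ by \eqref{prima} while the exponential weights in \eqref{matA} are positive, this yields $\mathcal A(\bbm[\xi]^*_{\underline\alpha})<0$.

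\emph{Step 3 (conclusion).} Assumption $a)$ of Proposition \ref{prop:espofigue} holds by \eqref{alphatra2}, assumption $b)$ by Step 2, and assumption $c)$ by Step 1; hence \ref{liouvhat} has a solution $v_\rho$ for all $\rho\in(8\pi N-\delta,8\pi N)$. Choosing $\rho=\rho_{geo}=4\pi\chi(\Sigma,\underline\alpha)=8\pi N-\varepsilon$, which lies in $(8\pi N-\delta,8\pi N)$ exactly because $\varepsilon\in(0,\delta)$, produces the desired solution of \ref{liouvgeom}. I do not expect a genuine obstacle here: the classes $\mathcal K^\pm$ and the constants $M^\pm$, $m^\pm$ were tailored precisely so that the inequality chain of Step 1 closes; the only points requiring care are to invoke \eqref{alphatra2} in the correct direction at both ends (opposite to the way it enters the proof of Theorem \ref{prop:example1}) and to use the same $\delta<1$ from Proposition \ref{prop:espofigue}, which is what simultaneously places $\rho_{geo}$ in the right interval and makes $\varepsilon-1<0$ in Step 2.
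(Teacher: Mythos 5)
Your proposal is correct and coincides with the paper's own proof, which is itself just a sketch stating that one mirrors the three steps of Theorem \ref{prop:example1}: a local maximum of $\Psi_{\underline\alpha}$ from \eqref{seconda2} and \eqref{alphatra2}, the sign $\mathcal A(\bbm[\xi]^*_{\underline\alpha})<0$ from \eqref{terza2} when $0<2N-\chi(\Sigma)-\sum_i\alpha_i<\frac1{4\pi}$, and the ``$\mathcal A<0$'' branch of Proposition \ref{prop:espofigue} giving solutions for $\rho\in(8\pi N-\delta,8\pi N)$ with $\rho_{geo}=8\pi N-\varepsilon$ in that range. Your reversed inequality chain in Step 1 and the $\varepsilon<1$ bookkeeping in Step 2 are exactly the details the paper leaves implicit.
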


\begin{proof}
The proof can be obtained following the same strategy of the proof of Theorem \ref{prop:example1}.

{Indeed first we show that the assumptions \eqref{seconda2} and \eqref{alphatra2} imply the existence of a local maximum point $\bbm[\xi]^*_{\underline\alpha}$ for  $\Psi_{\underline{\alpha}}$ (instead of a local minimum point).
\\
Then we prove that $\mathcal A(\bbm[\xi]^*_{\underline\alpha})<0$  (instead of $\mathcal A(\bbm[\xi]^*_{\underline\alpha}) >0$) if $$0<2N-\chi(\Sigma)- \sum_{i=1}^m \alpha_i < \frac{1}{4\pi},$$ using the  assumptions \eqref{terza2}. 
\\
Last we apply again Proposition \ref{prop:espofigue} to deduce that \ref{liouvhat} admits a solution for all $\rho\in (8\pi N-\delta, 8\pi N)$, where  $\delta=\delta(\alpha_*,\alpha^*, K)\in (0,1)$ 
is as in Proposition \ref{prop:espofigue}. 
The conclusion follows observing that $$\rho_{geo}= 4\pi\Big({\chi(\Sigma)}+\sum_{i=1}^m\alpha_i\Big) \in (8\pi N-\delta, 8\pi N)$$ thanks to assumption \eqref{hpsum2}.
}
\end{proof}

\

We point out  that also in this case  for any fixed $\bbm[\bar\xi], \alpha_*,\alpha^*$, considering functions $K$ having \emph{sufficiently concave} local maxima at points $\bar\xi_1,\ldots,\bar\xi_N$, where $\bbm[\bar\xi]=(\bar\xi_1,\ldots,\bar\xi_N)$, we can find plenty of functions $K$,  both positive or sign-changing, in the class $ {\mathcal K}^-_{\bar\xi,\alpha_*,\alpha^*}$.

\

\subsection{Examples in the case of the standard sphere} Now we will easily deduce  Theorems \ref{thm:examples1}, \ref{thm:examples2}, \ref{thm:examples3} in the introduction by the above Theorems \ref{prop:example1}, \ref{prop:example2} if 
 $(\Sigma,g)=(\mathbb{S}^2,g_0)$.
 Indeed when $(\Sigma,g)=(\mathbb{S}^2,g_0)$ and  $m=1$,  by Theorems \ref{prop:example1}, \ref{prop:example2} we immediately get the following.
\begin{corollary}\label{cor:sferam=1}
Let $N\in\N$, $p_1\in\mathbb{S}^2$ and 
$\bar{\bbm[\xi]}\in(\mathbb{S}^2\setminus\{p_1\})^N\setminus\Delta$. 
Then for any $K\in\mathcal  K_{{\bar\xi,-\frac{1}{2},2N}}^{\pm}$ there exists $\delta=\delta(K)>0$ such that if
\begin{equation}\label{condAlphapm}
\alpha_1 =  2(N-1)\pm\frac{\varepsilon}{4\pi}\qquad  \mbox{ for }\varepsilon\in (0, \delta)
\end{equation}
then \ref{liouvgeom} admits a solution with $\rho_{geo}=8\pi N \pm \varepsilon$.
\end{corollary}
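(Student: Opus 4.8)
The plan is to obtain Corollary~\ref{cor:sferam=1} as a direct specialization of Theorem~\ref{prop:example1} (for the sign $+$) and Theorem~\ref{prop:example2} (for the sign $-$) to $(\Sigma,g)=(\mathbb{S}^2,g_0)$, $m=1$, with the choice of parameters $\alpha_\star=-\tfrac12$, $\alpha^\star=2N$. Since $\chi(\mathbb{S}^2)=2$, the constraint $\sum_{i=1}^m\alpha_i=2N-\chi(\Sigma)\pm\tfrac{\varepsilon}{4\pi}$ appearing in \eqref{hpsum}--\eqref{hpsum2} collapses, for $m=1$, to exactly $\alpha_1=2(N-1)\pm\tfrac{\varepsilon}{4\pi}$, which is condition \eqref{condAlphapm}.

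First I would fix $N$, $p_1$ and $\bbm[\bar\xi]\in(\mathbb{S}^2\setminus\{p_1\})^N\setminus\Delta$, pick $K\in\mathcal K^{\pm}_{\bar\xi,-\frac12,2N}$, and let $\delta=\delta(-\tfrac12,2N,K)=:\delta(K)$ be the number furnished by Theorem~\ref{prop:example1} (resp.~\ref{prop:example2}); recall from the proofs of those theorems that $\delta\in(0,1)$. Then I would verify the only remaining hypothesis of those theorems, namely \eqref{alphatra}--\eqref{alphatra2}: for $\varepsilon\in(0,\delta)\subset(0,1)$ one has, in the $+$ case, $-\tfrac12\le 0\le 2(N-1)\le\alpha_1=2(N-1)+\tfrac{\varepsilon}{4\pi}<2N$, and in the $-$ case $-\tfrac12<-\tfrac{\varepsilon}{4\pi}\le\alpha_1=2(N-1)-\tfrac{\varepsilon}{4\pi}\le 2(N-1)\le 2N$, so indeed $-\tfrac12\le\alpha_1\le 2N$ in both cases.

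With \eqref{condAlphapm} translating into \eqref{hpsum} (resp.~\eqref{hpsum2}) and \eqref{alphatra} (resp.~\eqref{alphatra2}) checked, Theorem~\ref{prop:example1} (resp.~\ref{prop:example2}) applies and yields a solution of \ref{liouvgeom}; the associated $\rho_{geo}$ equals $4\pi(\chi(\Sigma)+\alpha_1)=4\pi\bigl(2+2(N-1)\pm\tfrac{\varepsilon}{4\pi}\bigr)=8\pi N\pm\varepsilon$, as claimed. There is no genuine obstacle in this argument --- the entire content sits in Theorems~\ref{prop:example1}--\ref{prop:example2} (and, beneath them, in Proposition~\ref{prop:espofigue}) --- and the only point requiring a line of care is the elementary check that $\alpha_1$ stays in the admissible window $[-\tfrac12,2N]$ for $\varepsilon$ small, together with the bookkeeping matching the class $\mathcal K^+$ (resp.~$\mathcal K^-$) to $\rho_{geo}$ approaching $8\pi N$ from the right (resp.~left).
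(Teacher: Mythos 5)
Your proposal is correct and coincides with the paper's own argument: the corollary is obtained exactly as you describe, by specializing Theorems \ref{prop:example1} and \ref{prop:example2} to $(\mathbb{S}^2,g_0)$ with $m=1$, $\alpha_\star=-\tfrac12$, $\alpha^\star=2N$, where $\chi(\mathbb{S}^2)=2$ turns \eqref{hpsum}--\eqref{hpsum2} into \eqref{condAlphapm}. Your explicit check that $\alpha_1$ stays in $[-\tfrac12,2N]$ for $\varepsilon\in(0,\delta)$ with $\delta<1$ is the only verification needed, and the paper leaves it implicit.
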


\begin{proof}[Proof of Theorem \ref{thm:examples1}] Theorem \ref{thm:examples1}  follows by Corollary \ref{cor:sferam=1} by taking $K$ in the subset of $\mathcal  K_{\bar{\xi},-\frac{1}{2},2N}^{+}$ made up of positive functions.
\end{proof}

\begin{remark} We emphasize  that Theorem \ref{thm:examples1} provides existence of a solution for \ref{liouvgeom} for special classes of functions $K$, whereas according to the result in \cite{Troy}   if $K\equiv 1$ then   \ref{liouvgeom} does not admit solutions on the standard sphere. 
\end{remark}

\begin{proof}[Proof of Theorem \ref{thm:examples2}]
Theorem \ref{thm:examples2}  follows by Corollary \ref{cor:sferam=1} by taking $\ell=0$ and considering  functions $K$ in the class $\mathcal  K_{{\bar{\xi},-\frac{1}{2},2N}}^{+}$ satisfying           (H1), (H2), (H3), (H4) with $(\mathbb{S}^2)^+$ contractible. It is easy to see that such functions exist (see Remark \ref{rimm} below).
\end{proof}
\begin{remark}\label{rimm}
Observe that it is possible to find examples of functions $K$ in the class considered in  Theorem \ref{thm:examples2} which are also axially symmetric.
For instance, if $m=1,$ $\ell=0$, $ N=1$, such an example is described by the first picture in   Figure \ref{figureUltima}: by locating  $p_1$ in the south pole and  $\bar\xi$ in the north pole we can construct an axially symmetric function $K$ with $(\mathbb{S}^2)^+$ coinciding with the upper hemisphere and having in $\bar\xi$ a sufficiently convex local minimum such that \ref{liouvgeom} admits a solution if $\alpha_1>0$ is sufficiently small. 

This is particularly interesting because in \cite{fra&raf} the authors exhibit   a class of axially symmetric  functions $K$ satisfying (H1), (H2), (H3), (H4) and with $(\mathbb{S}^2)^+$ contractible  for which \ref{liouvgeom} does not admit solutions if $m=1$, $\ell=0$  and $\alpha_1>0$ (see the second picture in Figure \ref{figureUltima}).  Of course this class of functions is different from the one considered  in Theorem \ref{thm:examples2}. \end{remark}

\bigskip

\begin{figure}[h]
  \centering
  \def\svgwidth{220pt}
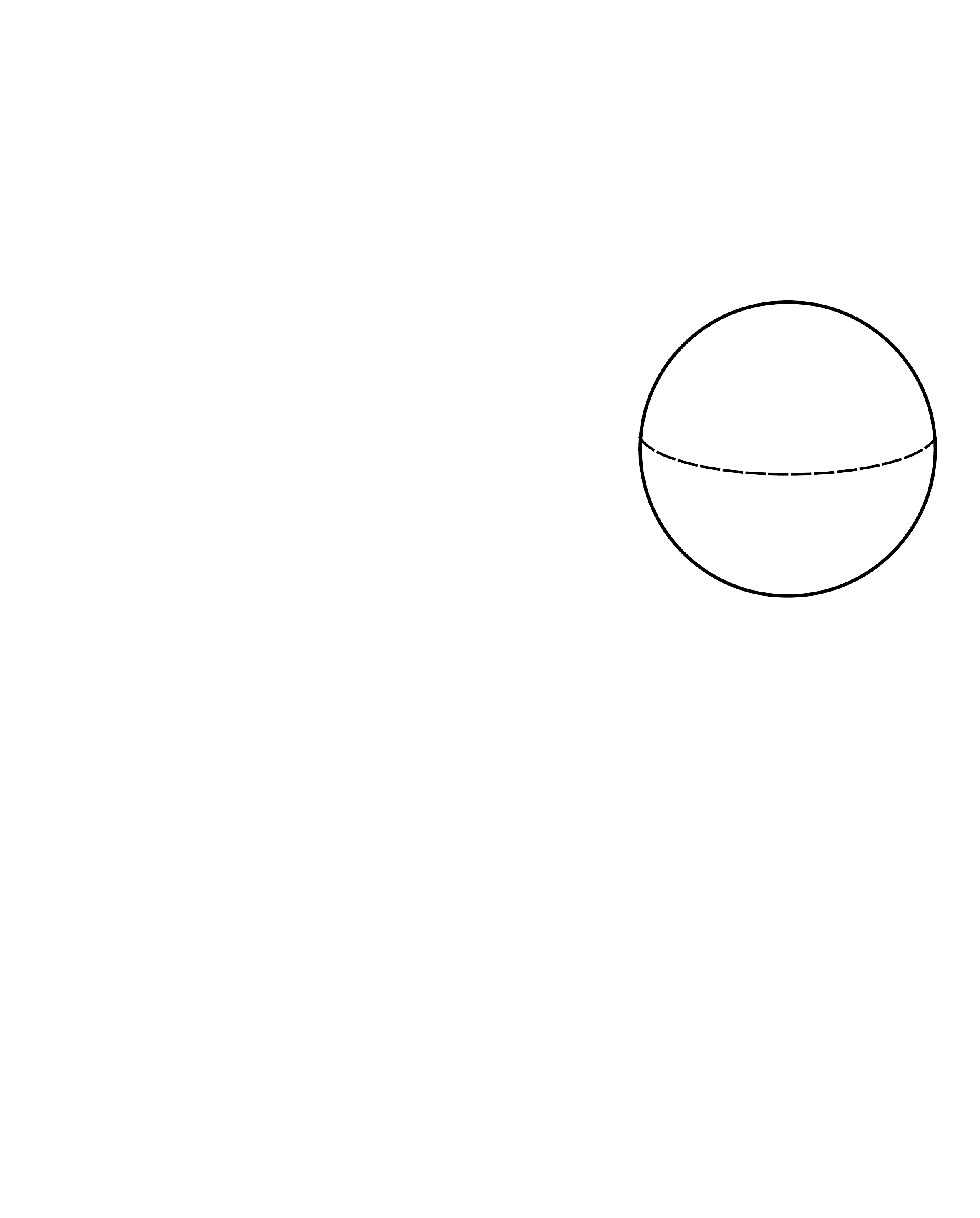
 \vspace{-1.0cm} \caption{$(\mathbb{S}^2,g_0)$, $\Sigma^+$ contractible, $\rho_{geo}= 8\pi+\varepsilon$}
\label{figureUltima}
\end{figure}

\bigskip

Again for $(\Sigma,g)=(\mathbb{S}^2,g_0)$, when $m=3$ and $N=2$ we obtain  the following special case from Theorem  \ref{prop:example1}. \begin{corollary}\label{cor:ultimo}
Let  $p_1,p_2,p_3\in \mathbb S^2$,
$\bar{\bbm[\xi]}\in(\mathbb{S}^2\setminus\{p_1,p_2,p_3\})^2\setminus\Delta$ and  
$-1<\alpha_{\star}\leq \alpha^{\star}$.
Then  for any $K\in\mathcal  K_{\bar\xi,\alpha_*,\alpha^*}^+$ there exists $\delta=\delta(\alpha_*,\alpha^*,K)>0$ such that if
\begin{equation}
\sum_{i=1}^3 \alpha_i  =  2+\frac{\varepsilon}{4\pi}\qquad  \mbox{ for }\varepsilon\in (0, \delta)
\end{equation}
and moreover
\begin{equation}\alpha_*\leq\alpha_i\leq\alpha^*\qquad\mbox{ for all $i=1,2,3$}
\end{equation}
then \ref{liouvgeom} admits a solution with $\rho_{geo}=16\pi  +\varepsilon$.
\end{corollary}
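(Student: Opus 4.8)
The plan is to read off the statement directly from Theorem~\ref{prop:example1}, which has already been established: it is nothing but the instance $(\Sigma,g)=(\mathbb S^2,g_0)$, $m=3$, $N=2$ of that theorem, so the only task is to match the numerology.

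First I would use $\chi(\mathbb S^2)=2$, so that $2N-\chi(\Sigma)=2\cdot 2-2=2$. Hence the structural assumption \eqref{hpsum} of Theorem~\ref{prop:example1}, namely $\sum_{i=1}^m\alpha_i=2N-\chi(\Sigma)+\frac{\varepsilon}{4\pi}$, becomes with $N=2$, $m=3$ exactly $\sum_{i=1}^3\alpha_i=2+\frac{\varepsilon}{4\pi}$, which is the hypothesis of the corollary; likewise the pinching condition \eqref{alphatra} is precisely $\alpha_\star\le\alpha_i\le\alpha^\star$, and the membership $K\in\mathcal K^+_{\bar\xi,\alpha_\star,\alpha^\star}$ is assumed. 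All the remaining data required by Theorem~\ref{prop:example1} --- the points $p_1,p_2,p_3\in\mathbb S^2$, the tuple $\bar{\bbm[\xi]}\in(\mathbb S^2\setminus\{p_1,p_2,p_3\})^2\setminus\Delta=\mathcal M$, and the numbers $-1<\alpha_\star\le\alpha^\star$ --- are exactly those supplied in the statement.

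Next I would invoke Theorem~\ref{prop:example1}: it produces $\delta=\delta(\alpha_\star,\alpha^\star,K)>0$ such that, whenever \eqref{hpsum} and \eqref{alphatra} hold for some $\varepsilon\in(0,\delta)$, the problem \ref{liouvgeom} admits a solution with $\rho_{geo}=8\pi N+\varepsilon$, that is $\rho_{geo}=16\pi+\varepsilon$. As a consistency check one verifies $\rho_{geo}=4\pi\big(\chi(\mathbb S^2)+\sum_{i=1}^3\alpha_i\big)=4\pi\big(2+2+\frac{\varepsilon}{4\pi}\big)=16\pi+\varepsilon$, in agreement with \ref{liouvgeom}. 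This finishes the proof.

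There is no genuine obstacle in the deduction itself; what is worth stressing is the interest of the conclusion rather than its derivation. As recalled before the statement, by the degree formula of \cite{ChenLin} the Leray--Schauder degree of \ref{liouvhat} vanishes for $\rho\in(16\pi,8\pi(3+2\alpha_1))$ when $K>0$ and $\alpha_1=\alpha_2\in(-\tfrac13,0)$, $\alpha_3>2$, so no general existence theory applies in that range. Nevertheless the class $\mathcal K^+_{\bar\xi,\alpha_\star,\alpha^\star}$ --- roughly, functions with sufficiently convex local minima at the two components of $\bar{\bbm[\xi]}$ --- is non-empty even within positive functions, and the corollary yields solvability for every such $K$. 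Specializing further to $\alpha_1=\alpha_2\in(-\tfrac13,0)$, $\alpha_3=2-2\alpha_1+\frac{\varepsilon}{4\pi}$ (so that $\sum_i\alpha_i=2+\frac{\varepsilon}{4\pi}$) is precisely what later gives Theorem~\ref{thm:examples3}; the only remaining work there is the routine construction of explicit $K$ satisfying \eqref{prima}--\eqref{terza}.
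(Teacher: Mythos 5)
Your derivation is correct and coincides with the paper's: Corollary \ref{cor:ultimo} is stated there precisely as the special case $(\Sigma,g)=(\mathbb S^2,g_0)$, $m=3$, $N=2$ of Theorem \ref{prop:example1}, and your numerology ($\chi(\mathbb S^2)=2$, so $2N-\chi(\Sigma)=2$ and $\rho_{geo}=8\pi N+\varepsilon=16\pi+\varepsilon$) matches exactly. Nothing further is needed.
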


\

{
\begin{proof}[Proof of Theorem \ref{thm:examples3}]
We just apply Corollary \ref{cor:ultimo} for $\alpha_*=-\frac{1}{2}$,  $\alpha^*=3$ and fixing 
 $\alpha_1=\alpha_2\in(-\tfrac13,0)$, so that $\alpha_3= 2-2\alpha_1+\frac{\varepsilon}{4\pi}$. 
\end{proof}
\begin{remark}
We emphasize that  Theorem \ref{thm:examples3} assures the existence of a solution for \ref{liouvgeom} in a case when,
 if $K$ is also positive, the Leray-Schauder degree of the equation \ref{liouvgeom} vanishes according to the formula in \cite{ChenLin}.
\end{remark}
}

\subsection*{Acknowledgements} The first author has been supported by the PRIN-project 201274FYK7\_007.

 The second author has been supported by the PRIN-project $201274$FYK7$\_005$ and by Fondi Avvio alla Ricerca Sapienza 2015. 
 
 The third author has been supported by the PRIN-project 201274FYK7\_005.

\end{document}